\newcommand{\id}{\mathrm{id}}
\DeclareMathOperator*{\colim}{\mathrm{colim}}
\definecolor{coloryellow}{RGB}{240,228,66}
\definecolor{colorskyblue}{RGB}{86,180,233}
\definecolor{colorvermillion}{RGB}{213,94,0}
\DeclareSymbolFont{sfletters}{OT1}{cmss}{m}{n}
\DeclareMathSymbol{\sTheta}{\mathord}{sfletters}{"02}
\newtheorem*{AEGT}{Analog Eilenberg--Ganea theorem}
\theoremstyle{definition}
\newtheorem{definition}{Definition}[section]
\newtheorem{example}[definition]{Example}
\newtheorem{construction}[definition]{Construction}
\newtheorem{problem}[definition]{Problem}
\theoremstyle{plain}
\newtheorem{proposition}[definition]{Proposition}
\newtheorem{lemma}[definition]{Lemma}
\newtheorem{corollary}[definition]{Corollary}
\newtheorem{theorem}[definition]{Theorem}
\newtheorem{conjecture}[definition]{Conjecture}
\theoremstyle{remark}
\newtheorem{remark}[definition]{Remark}
    \DeclareFontFamily{U}{wncy}{}
    \DeclareFontShape{U}{wncy}{m}{n}{<->wncyr10}{}
    \DeclareSymbolFont{mcy}{U}{wncy}{m}{n}
    \DeclareMathSymbol{\Sha}{\mathord}{mcy}{"58}
\newsavebox{\foobox}
\title{Analog category and complexity}
\author{Ben Knudsen and Shmuel Weinberger}
\begin{document}

\maketitle

\begin{abstract}
We study probabilistic variants of the Lusternik--Schnirelmann category and topological complexity, which bound the classical invariants from below. We present a number of computations illustrating both wide agreement and wide disagreement with the classical notions. In the aspherical case, where our invariants are group invariants, we establish a counterpart of the Eilenberg--Ganea theorem in the torsion-free case, as well as a contrasting universal upper bound in the finite case.
\end{abstract}

\section{Introduction}

In 1987, Smale \cite{Smale:TA} launched a new approach to computational complexity based on continuous mathematics, measuring the number of paths of computation that a ``real number'' machine requires to find a polynomial root. Smale showed that a certain amount of discontinuity is inevitable in any method of root finding by bounding the Schwarz genus of the map from ordered to unordered configurations in $\mathbb{C}$ using cohomology.

Building on Smale's idea, Farber \cite{Farber:TCMP} initiated the study of the topological complexity of motion planning, a fundamental problem in robotics. As explained in \cite{Farber:TRMP}, this notion also admits an interpretation as a measure of the minimal level of \emph{randomness} in a motion planning algorithm. This model of randomness is ``digital'': given a starting and ending point, the algorithm implements the motion labeled by an element of $\{1,2,\ldots, k\}$ chosen randomly according to a fixed distribution.

In this paper, we allow algorithms with ``analog'' randomness. Consider the round metric on $\mathbb{RP}^d$, in which every pair of points is joined by either one or two shortest geodesics, and, when there is only one, the second shortest is the geodesic in the reverse direction. We may choose between these two options according to a distribution that interpolates continuously between the first case, where the two options are distinguishable by length, and the second, where they are not (see Proposition \ref{prop:projective}). Thus, with analog randomness, a robot on $\mathbb{RP}^d$ needs only \emph{two rules}, a sharp contrast to the beautiful result of \cite{FarberTabachnikovYuzvinsky:TRMPPS} calculating the ``digital'' topological complexity of $\mathbb{RP}^d$ as its immersion dimension, which grows linearly in $d$.

More formally, we define a notion of \emph{analog sectional category} (Definition \ref{def:asc}) of a map by recording the minimal support of a continuous choice of fiberwise probability measure. Specializing to path fibrations, we obtain invariants $\mathrm{acat}(X)$ and $\mathrm{ATC}_r(X)$ (Definition \ref{def:atc}), analog counterparts of the Lusternik--Schnirelmann category \cite{LusternikSchnirelmann:MTPV} and $r$th sequential topological complexity \cite{Rudyak:HATC}. Our main computational result (Theorem \ref{thm:cd} below) is the following.

\begin{AEGT}
For any torsion-free group $G$, we have the equality $\mathrm{acat}(BG)=\mathrm{cd}(G).$
\end{AEGT}

In a striking departure from the classical setting, this equality fails for finite groups; indeed, as shown in Theorem \ref{thm:finite group}, the quantities $\mathrm{ATC}_r(BG)$ are all bounded above by $|G|-1$ for $G$ finite. 

\begin{problem}\label{problem:algebra}
Describe the group invariants $\mathrm{ATC}_r(BG)$ algebraically.
\end{problem}

The classical counterpart of this problem is regarded as perhaps the most difficult open problem in the study of (sequential) topological complexity \cite{GrantLuptonOprea:NLBTCAS,FarberOprea:HTCAS, FarberMescher:TCASS, EspinosaFarberMescherOprea:STCASSCSI}, and even calculations for specific groups are often difficult \cite{CohenVandembroucq:TCKB, Knudsen:TCPGBGSM,AguilarGuzmanGonzalezOprea:RAAGPPTCGF,Knudsen:OSTCGBG}. Nevertheless, given how much simpler some analog calculations are (projective spaces, for example), there is reason to hope that Problem \ref{problem:algebra} might be more tractable. If so, it may even shed light on the classical problem, since the analog invariants bound the digital invariants from below.

In the final stages of this paper's completion, there appeared the preprint \cite{DranishnikovJauhari:DTCLSC}, which introduces the notions of ``distributional'' category and topological complexity. Although the motivations differ, the basic idea seems very similar to ours. We welcome their paper as evidence of the naturality of these ideas, and we recommend it to the interested reader for a different selection of first calculations. We expect the definitions to be very closely related.

\begin{conjecture}\label{conj:coincidence}
The analog and distributional notions of category and topological complexity coincide on the class of metrizable spaces.
\end{conjecture}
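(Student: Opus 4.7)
The plan is to express both invariants as lifting problems against a common fibration of probability-measure spaces, and then to use metrizability to verify that the two problems are equivalent. I expect each invariant to be describable as the minimal $n$ admitting a continuous section of a space $\mathrm{Prob}_n(p)\to B$ whose fiber over $b$ consists of probability measures on $p^{-1}(b)$ supported on at most $n$ points; the first task is to extract from \cite{DranishnikovJauhari:DTCLSC} the precise topological conventions and align them with Definition \ref{def:asc}, producing a dictionary translating one set of continuity and support data into the other.

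Once that dictionary is in place, the comparison can be carried out by recalling that, for a metrizable $E$, the space of Borel probability measures on $E$ under the weak-$\ast$ topology is itself metrizable, and that the subspace $\mathrm{Prob}_n(E)$ of atomic measures of support size at most $n$ is a continuous image (indeed a quotient) of $E^n\times\Delta^{n-1}$ in that topology. Locally, then, a continuous section in either framework is represented by a continuous choice of an ordered $n$-tuple together with a weight simplex; packaging these local representatives should show that a section realizing one invariant can be promoted to a section realizing the other. Metrizability enters here to supply Michael-type selection arguments for lifting against the quotient $E^n\times\Delta^{n-1}\twoheadrightarrow \mathrm{Prob}_n(E)$.

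The main obstacle will be the behavior at strata where the support cardinality strictly drops. A family of measures in which some weights tend to zero is continuous for the weak-$\ast$ topology but can be discontinuous under a finer topology that remembers the support size exactly, and the two frameworks could a priori differ by precisely this choice. I anticipate that both in fact use the weak-$\ast$ convention and that the equality then falls out of the quotient description above; but if the two definitions genuinely impose different topologies, then the argument would have to split into two separate inequalities of sectional categories, one direction by direct inclusion and the other by a smoothing construction that converts a distributional section into an analog one of the same support bound by perturbing off the low-support strata.
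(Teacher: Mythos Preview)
The statement you are attempting to prove is labeled a \emph{conjecture} in the paper, and the paper supplies no proof. There is therefore nothing to compare your proposal against; rather, you are proposing to settle an open problem, and your text should be read on those terms.

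Read that way, what you have written is a research outline, not a proof. Each of the load-bearing steps is stated in the conditional (``I expect,'' ``should show,'' ``I anticipate''), and you yourself flag the decisive obstacle without resolving it. The central difficulty is exactly the one you name at the end: the analog invariants of this paper are defined using the \emph{quotient topology} on $\mathcal{P}(X)$ (Definition~\ref{def:probability measures}), whereas the distributional invariants of \cite{DranishnikovJauhari:DTCLSC} use the L\'{e}vy--Prokhorov metric. Remark~\ref{remark:metrizable} makes this contrast explicit, and Lemma~\ref{lem:big basis} shows that the quotient topology admits the expected basis only under a compactness hypothesis; for general metrizable $X$ the two topologies on $\mathcal{P}_n(X)$ need not agree. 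Your sentence ``I anticipate that both in fact use the weak-$\ast$ convention'' is therefore false for the analog side, and the whole first half of your plan collapses.

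Your fallback---two inequalities, one by inclusion of topologies and the other by ``perturbing off the low-support strata''---is where the real content would lie, but it is not developed. The continuous identity map goes from the quotient topology to the weak-$\ast$ topology, which gives one inequality immediately; the hard direction is to take a section continuous for the coarser (metric) topology and produce one continuous for the finer (quotient) topology without increasing the support bound. Neither a Michael selection theorem nor a generic perturbation argument obviously accomplishes this, since the strata where support drops are precisely where the quotient topology is strictly finer. Until that step is made precise, the conjecture remains open.
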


The relevance of metrizability is that the distributional invariants are defined using the L\'{e}vy--Prokhorov metric, requiring that the background space itself be metrizable, which is avoided in our approach---see Remark \ref{remark:metrizable} below. Consequently, the analog invariants are defined on a much larger class of spaces. This technicality becomes relevant when studying the particularly interesting class of aspherical spaces---see Remark \ref{remark:locally finite} below.

\subsection{Conventions} We use the following slightly non-standard notational convention for topological simplices:
\[\Delta^{n-1}:=\left\{(t_1,\ldots, t_n)\in[0,1]^n:\sum_{i=1}^nt_i=1\right\}.\] We work in Steenrod's convenient category of topological spaces \cite{Steenrod:CCTS}---see Appendix \ref{appendix:convenient} for a summary of relevant facts about these spaces. Topological spaces are implicitly convenient, as are limits, including products, and mapping spaces. Convenient colimits, when they exist, are the same as ordinary colimits. The adjective ``compact'' refers to the definition in terms of open covers.

\subsection{Acknowledgements} The first author thanks Matan Harel for helpful conversations. The paper benefited from comments from John Oprea and two anonymous referees.

\section{Probability measures with finite support}

Intuitively, a probability measure with finite support is a convex linear combination $\sum_{i=1}^nt_i\delta_{x_i}$, where $\delta_x$ is the point mass or Dirac delta ``function'' assigning full measure to the singleton $\{x\}$. Of course, measure theory is one natural habitat for such an object, but a purely combinatorial and topological approach is most appropriate for our purposes.

\begin{definition}\label{def:probability measures}
Let $X$ be a topological space. The space of \emph{probability measures with finite support} on $X$ is the quotient space 
\[
\mathcal{P}(X)=\faktor{\bigsqcup_{n>0}X^n\times \Delta^{n-1}}{\sim}
\] where the indicated equivalence relation is generated by the following three relations:
\begin{enumerate}
\item $(x_1,\ldots, x_n,t_1,\ldots, t_n)\sim(x_{\sigma(1)},\ldots, x_{\sigma(n)}, t_{\sigma(1)},\ldots, t_{\sigma(n)})$ for $\sigma\in \Sigma_n$;
\item $(x_1,\ldots, x_n,t_1,\ldots, t_n)\sim (x_1,\ldots, x_{n-1}, t_1,\ldots, t_{n-1}+t_n)$ if $x_{n-1}=x_n$; and
\item $(x_1,\ldots, x_n,t_1,\ldots, t_n)\sim (x_1,\ldots, x_{n-1}, t_1,\ldots, t_{n-1})$ if $t_n=0$.
\end{enumerate}
\end{definition}

\begin{remark}\label{remark:metrizable}
When $X$ is metrizable, more classical topologies are available on the set $\mathcal{P}(X)$, e.g., the metric topologies induced by the L\'{e}vy--Prokhorov and Wasserstein metrics. We adopt this purely topological definition for two reasons: first, as articulated in Theorem \ref{thm:point set}, it carries numerous desirable technical properties; second, our examples of greatest interest are typically non-metrizable, at least on the nose (see Remark \ref{remark:locally finite}).

As a referee informed us, a topology on $\mathcal{P}_n(X)$ for $n$ fixed was previously defined by Fedorchuk \cite{Fedorchuk:PMANR} (see also \cite{Cu:PMFSTS}). Although we have not checked the details, it seems likely that this topology coincides with the quotient topology in the compact case but not in general (see Lemma \ref{lem:big basis} below). The quotient topology on $\mathcal{P}_n(X)$ was previously studied by Kallel--Karoui \cite{KallelKaroui:SJWB}.
\end{remark}

According to the motivation indicated above, we denote the equivalence class of $(x_1,\ldots, x_n, t_1,\ldots, t_n)$ by $\sum_{i=1}^nt_i\delta_{x_i}$. 

\begin{lemma}\label{lem:lowest terms}
Let $X$ be a topological space. Every point of $\mathcal{P}(X)$ can be written uniquely in the form $\sum_{i=1}^nt_i\delta_{x_i}$ subject to the following two conditions:
\begin{enumerate}
\item $t_i>0$ for $1\leq i\leq n$; and
\item $x_i\neq x_j$ for $i\neq j$.
\end{enumerate}
\end{lemma}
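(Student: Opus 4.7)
The plan is to prove existence and uniqueness separately. Existence is the easy half: starting from any representative $(x_1,\ldots,x_n,t_1,\ldots,t_n)$, I first apply relation (3) to delete all indices with $t_i=0$; then I apply relation (1) to group together indices with equal $x_i$ values; then I apply relation (2) repeatedly to merge each such group into a single term. The resulting representative satisfies (a) and (b) by construction.

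For uniqueness, the natural approach is to exhibit a complete invariant on $\mathcal{P}(X)$ that recovers the lowest-terms data. Concretely, to each representative $(x_1,\ldots,x_n,t_1,\ldots,t_n)$ I would associate the function $w\colon X\to[0,1]$ defined by
\[
w(x)=\sum_{i\,:\, x_i=x}t_i,
\]
together with its support $\supp(w)=\{x\in X: w(x)>0\}$, which is a finite set. The key observation is that this function is invariant under each of the three generating relations: permutation leaves the sum unchanged; merging $x_{n-1}=x_n$ by replacing $t_{n-1},t_n$ with $t_{n-1}+t_n$ leaves $w(x_{n-1})$ unchanged; and dropping a term with $t_n=0$ adds nothing to any value $w(x)$. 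Since the equivalence relation is by definition the one generated by (1)--(3), the function $w$ descends to a well-defined map $\mathcal{P}(X)\to\Map(X,[0,1])$.

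Now if $\sum_{i=1}^n t_i\delta_{x_i}$ is a representative in lowest terms, then by condition (b) the sum defining $w(x)$ has at most one nonzero summand, and by condition (a) we have $w(x_i)=t_i$ for each $i$ and $w(x)=0$ otherwise; in particular $\supp(w)=\{x_1,\ldots,x_n\}$. Thus the lowest-terms representative is completely recovered (up to the harmless reordering already identified by relation (1)) from the invariant $w$: the unordered set $\supp(w)$ gives the points, and the values of $w$ on it give the coefficients. Two lowest-terms representatives of the same equivalence class therefore produce identical data, proving uniqueness.

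The main obstacle is the uniqueness direction, and within that the conceptual step of realizing that one should work with an invariant on representatives rather than trying to manipulate equivalence classes directly. The verification that $w$ is invariant under the three generating relations is routine, but it is essential that the invariance be checked one relation at a time---the quotient is by the equivalence relation these \emph{generate}, so any function constant on each single-step move automatically descends to the quotient.
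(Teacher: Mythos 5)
Your proof is correct and complete. The paper in fact states this lemma without proof (treating it as an easy exercise), so there is no argument of the authors' to compare against; your two-step structure---existence by explicit reduction using the three relations, uniqueness via the invariant $w(x)=\sum_{i:\,x_i=x}t_i$ checked against each generating relation separately---is exactly the standard justification one would supply. Two small points worth making explicit: since the coefficients lie in $\Delta^{n-1}$ they sum to $1$, so the reduction in the existence step can never delete every term and the lowest-terms representative is nonempty; and the whole argument is purely set-theoretic, so no continuity of $w$ needs to be verified.
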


Thus, written in lowest terms, the coefficient of $\delta_x$ in $\mu\in \mathcal{P}(X)$ is well-defined and denoted $\mu(x)$.

\begin{definition}\label{def:support}
Let $X$ be a topological space. The \emph{support} of $\mu\in\mathcal{P}(X)$ is the (finite) subset $\mathrm{supp}(\mu)=\{x\in X: \mu(x)\neq0\}$. We write $\mathcal{P}_n(X)\subseteq \mathcal{P}(X)$ for the subspace of probability measures $\mu$ with $|\mathrm{supp}(\mu)|\leq n$. 
\end{definition}

The definitions imply directly that $\mathcal{P}_n(X)\subseteq \mathcal{P}(X)$ is a closed subspace.

\begin{example}
The assignment $x\mapsto \delta_x$ determines a continuous map $\eta_X:X\to \mathcal{P}(X)$ restricting to a homeomorphism $X\cong \mathcal{P}_1(X)$.
\end{example}

\begin{example}\label{example:discrete}
If $X$ is discrete, then the assignment $\mu\mapsto \sum_{x\in X} \mu(x)x$ determines a homeomorphism $\mathcal{P}(X)\cong\Delta^X$, the simplex with vertex set $X$. This homeomorphism identifies $\mathcal{P}_n(X)$ with the $(n-1)$-skeleton of $\Delta^X$. Continuity is immediate from properties of the quotient topology, and bijectivity is an easy exercise. When $X$ is finite, the source is compact and the target Hausdorff, so the claim follows; in the infinite case, both spaces carry the colimit topology inherited from the finite subsets of $X$.
\end{example}

The construction $X\mapsto \mathcal{P}(X)$ extends canonically to an endofunctor on the category of convenient spaces; given a continuous map $f:X\to Y$, the maps $f^n\times\Delta^{n-1}$ for varying $n$ descend to a continuous map $f_*:\mathcal{P}(X)\to \mathcal{P}(Y)$. Concretely, we have
\[
f_*\left(\sum_{i=1}^nt_i\delta_{x_i}\right)=\sum_{i=1}^nt_i\delta_{f(x_i)}.
\]

Concerning this functor, we have the following fundamental result, whose proof is deferred to Section \ref{section:proof of point set}.

\begin{theorem}\label{thm:point set}
The functor $\mathcal{P}$ is an endofunctor on the category of convenient spaces, which preserves homotopy, sifted colimits, quotient maps, and closed embeddings.
\end{theorem}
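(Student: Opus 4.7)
The plan is to treat each preservation property in turn, making use of the filtration $\mathcal{P}(X) = \colim_n \mathcal{P}_n(X)$ and the presentation of each $\mathcal{P}_n(X)$ as a quotient of $X^n \times \Delta^{n-1}$. To confirm that $\mathcal{P}(X)$ remains convenient, I would use Lemma \ref{lem:lowest terms}: the coefficient functions $\mu \mapsto \mu(x)$ assemble into a separating family of continuous maps to $[0,1]$, giving Hausdorffness, while compact generation is inherited from the quotient and product constructions. For homotopy invariance, I would construct a natural continuous map $\Phi_X \colon \mathcal{P}(X) \times I \to \mathcal{P}(X \times I)$ by $\bigl(\sum_i t_i \delta_{x_i}, s\bigr) \mapsto \sum_i t_i \delta_{(x_i,s)}$, with continuity inherited from the presentation level $X^n \times \Delta^{n-1} \times I$ and compatibility with the three relations of Definition \ref{def:probability measures}. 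A homotopy $H \colon X \times I \to Y$ from $f$ to $g$ then gives the desired homotopy $H_* \circ \Phi_X$ from $f_*$ to $g_*$.

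For sifted colimits, I would exploit that sifted colimits commute with finite products in the convenient category, so each functor $X \mapsto X^n \times \Delta^{n-1}$ preserves them. Since $\mathcal{P}_n(X)$ is a reflexive coequalizer of maps between such functors (encoding the symmetry, coincidence, and zero-coefficient relations), $\mathcal{P}_n$ preserves sifted colimits, and hence so does $\mathcal{P} = \colim_n \mathcal{P}_n$. Preservation of quotient maps reduces likewise to each $\mathcal{P}_n$: given a quotient $q \colon X \to Y$, the product $q^n \times \id$ is again a quotient (since convenient products preserve quotients), and its composite with the presenting quotient $Y^n \times \Delta^{n-1} \twoheadrightarrow \mathcal{P}_n(Y)$ factors through $\mathcal{P}_n(X)$; a standard diagram chase then forces $q_* \colon \mathcal{P}_n(X) \to \mathcal{P}_n(Y)$ to be a quotient map, and passing to the filtered colimit along the closed inclusions $\mathcal{P}_n \hookrightarrow \mathcal{P}_{n+1}$ concludes.

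The main obstacle will be closed embeddings. Given a closed embedding $i \colon A \hookrightarrow X$, injectivity of $i_* \colon \mathcal{P}_n(A) \to \mathcal{P}_n(X)$ follows from Lemma \ref{lem:lowest terms}, and the image consists of those measures supported in $A$. To see this image is closed, I would compute the saturation of $A^n \times \Delta^{n-1}$ under the equivalence relation as
\[ \bigcap_{i=1}^n \bigl(\{t_i = 0\} \cup \{x_i \in A\}\bigr), \]
viewed as a subset of $X^n \times \Delta^{n-1}$; this is manifestly closed, and the quotient presentation then delivers closedness of the image. The genuinely subtle point---where the convenient setting is likely essential---is that $i_*$ should be a topological embedding, not merely a continuous closed injection. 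This reduces to showing that the quotient topology on $\mathcal{P}_n(A)$ agrees with the subspace topology induced from $\mathcal{P}_n(X)$, which in turn amounts to verifying that the restriction of the presenting quotient to the closed saturated set above remains a quotient map onto $\mathcal{P}_n(A)$. For this I expect to need a compact generation argument together with the finer basis description alluded to in Remark \ref{remark:metrizable}, after which compatibility with the filtered colimit in $n$ gives the statement for $\mathcal{P}$.
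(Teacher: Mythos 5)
Your treatments of homotopy, quotient maps, and sifted colimits are essentially the paper's arguments and are fine. The proposal breaks down, however, at the very first step: the claim that the coefficient functions $\mu\mapsto\mu(x)$ are continuous and separate points is false, so your proof of Hausdorffness (hence of convenience, via Proposition \ref{prop:quotient}) does not work. Concretely, take $X=\mathbb{R}$ and $x=0$: the points $\tfrac12\delta_{1/k}+\tfrac12\delta_{0}$ converge to $\delta_0$ in the quotient topology (lift the sequence to $X^2\times\Delta^1$), yet the coefficient of $\delta_0$ jumps from $\tfrac12$ to $1$. Mass can collide in the limit, so evaluation at a point is at best semicontinuous. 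This is precisely why the paper introduces the auxiliary space $\mathcal{M}(X)$ and the open sets $\mathcal{M}(U,r,\epsilon,\delta)$ of Construction \ref{construction:basis}, which measure mass on \emph{open} sets and allow an $\epsilon$ of error plus a $\delta$ of stray mass; Hausdorffness is then extracted from Lemma \ref{lem:big basis}. Some substitute for this apparatus is unavoidable, and your proposal contains none.

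The closed-embedding argument has a similar problem. Your computation of the saturation of $A^n\times\Delta^{n-1}$ as $\bigcap_i(\{t_i=0\}\cup\{x_i\in A\})$ is correct and gives closedness of the image more cleanly than the paper does, but you then correctly identify the real issue---that the quotient topology on $\mathcal{P}_n(A)$ from $A^n\times\Delta^{n-1}$ might be strictly finer than the subspace topology from $\mathcal{P}_n(X)$---and defer it to ``a compact generation argument together with the finer basis description alluded to in Remark \ref{remark:metrizable}.'' That basis description \emph{is} the content of Lemma \ref{lem:big basis}, which is where most of the work in the paper's Section \ref{section:proof of point set} lives: one proves the basis statement for compact $X$, extends basic opens from $\mathcal{M}(A)$ to $\mathcal{M}(X)$ to get the embedding in the compact case, and then runs a colimit-over-compact-subsets argument (using that $\mathcal{M}$ preserves filtered colimits and a pullback diagram of injections) to handle general $X$, followed by one more such reduction to show $\mathcal{P}(X)\hookrightarrow\mathcal{M}(X)$ is a closed embedding. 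As written, your proposal names the obstacle but supplies neither the basis nor the reduction, so both the convenience and the closed-embedding clauses of the theorem remain unproved.
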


In particular, the map $f_*$ is a homotopy equivalence whenever $f$ is so.

\section{Extended functoriality}

In addition to the functoriality articulated in the previous section, the construction $\mathcal{P}$ carries several pieces of algebraic structure that will be indispensable in our later investigation. 

\begin{construction}
We define a function $\kappa_X:\mathcal{P}(\mathcal{P}(X))\to \mathcal{P}(X)$ as follows. Given $\mu_j\in\mathcal{P}(X)$ for $1\leq j\leq k$, write $\mu_j=\sum_{i=1}^{n_j}t_{ij}\delta_{x_{ij}}$, and define 
\[\kappa_X\left(\sum_{j=1}^ks_j\delta_{\mu_j}\right)=\sum_{j=1}^k\sum_{i=1}^{n_j}s_jt_{ij}\delta_{x_{ij}}.
\]
\end{construction}

It is easy to check that $\kappa_X$ is well-defined. Notice that $\kappa_X(\mathcal{P}_k(\mathcal{P}_n(X)))\subseteq \mathcal{P}_{kn}(X)$ by construction.

\begin{proposition}\label{prop:monad}
The triple $(\mathcal{P},\kappa,\eta)$ is a monad.
\end{proposition}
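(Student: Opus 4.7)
The plan is to check, in order: that $\kappa_X$ is well-defined on equivalence classes, that it is continuous, that $\kappa$ is a natural transformation, and that the two unit identities and the associativity identity of a monad hold. The naturality of $\eta$ was essentially recorded in the preceding section, and well-definedness of $\kappa_X$ is a routine verification against the three generators of the equivalence relation in Definition \ref{def:probability measures}, applied both in the outer variables $(s_j,\mu_j)$ and in the inner variables $(x_{ij},t_{ij})$ describing each $\mu_j$.

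For continuity of $\kappa_X$ I would lift to the level of presentations. By Theorem \ref{thm:point set}, the quotient map $q_X:\bigsqcup_n X^n\times \Delta^{n-1}\twoheadrightarrow \mathcal{P}(X)$ is preserved by finite products (products preserve quotient maps in the convenient category) and by the functor $\mathcal{P}$ itself. Iterating, the composite
\[
\bigsqcup_k\bigsqcup_{n_1,\ldots,n_k}\left(\prod_{j=1}^k X^{n_j}\times \Delta^{n_j-1}\right)\times \Delta^{k-1}\ \twoheadrightarrow\ \bigsqcup_k \mathcal{P}(X)^k\times \Delta^{k-1}\ \twoheadrightarrow\ \mathcal{P}(\mathcal{P}(X))
\]
is a quotient map, so it suffices to check that the composite of this quotient with $\kappa_X$ is continuous. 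On a presentation that composite is
\[
\bigl((x_{ij},t_{ij})_{i,j},(s_j)_j\bigr)\ \longmapsto\ \sum_{j=1}^k\sum_{i=1}^{n_j}s_jt_{ij}\delta_{x_{ij}},
\]
which factors through scalar multiplication in $[0,1]$, the map $\eta_X$, and $q_X$, all continuous.

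Naturality of $\kappa$ and the two unit identities follow by direct evaluation on a representative $\sum_j s_j\delta_{\sum_i t_{ij}\delta_{x_{ij}}}$. For the left unit, $\eta_{\mathcal{P}(X)}(\mu)=\delta_\mu$ (a single term with coefficient $1$), which $\kappa_X$ sends to $\mu$; for the right unit, $\mathcal{P}(\eta_X)$ replaces each $x_i$ in $\sum_i t_i\delta_{x_i}$ by $\delta_{x_i}$, after which $\kappa_X$ collapses the result back to $\mu$. Associativity $\kappa_X\circ \kappa_{\mathcal{P}(X)}=\kappa_X\circ \mathcal{P}(\kappa_X)$ reduces, on a triple-indexed representative $\sum_\ell r_\ell\delta_{\sum_j s_{j\ell}\delta_{\sum_i t_{ij\ell}\delta_{x_{ij\ell}}}}$, to the associativity of multiplication in $\mathbb{R}$: both sides return $\sum_{\ell,j,i}r_\ell s_{j\ell}t_{ij\ell}\delta_{x_{ij\ell}}$.

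The only step with any friction is the continuity argument, where one must ensure that the quotient maps involved remain quotients after being hit with $\mathcal{P}$ and finite products. This is precisely what Theorem \ref{thm:point set} and the convenience of the ambient category provide, and it is the reason for insisting on that setting at the outset.
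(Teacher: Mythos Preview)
Your proof is correct and follows essentially the same route as the paper's: both reduce continuity of $\kappa_X$ to the presentation level by exhibiting the iterated quotient $\bigsqcup_k\bigsqcup_{n_1,\ldots,n_k}\bigl(\prod_j X^{n_j}\times\Delta^{n_j-1}\bigr)\times\Delta^{k-1}\twoheadrightarrow \mathcal{P}(\mathcal{P}(X))$ and then checking that the lifted map is continuous via scalar multiplication and the defining quotient of $\mathcal{P}(X)$; the monad axioms are verified by direct evaluation in both. One small remark: your displayed composite only uses that finite products preserve quotient maps (Proposition~\ref{prop:quotient product}), so the appeal to the clause of Theorem~\ref{thm:point set} asserting that $\mathcal{P}$ preserves quotients is not actually needed---the paper cites only Proposition~\ref{prop:quotient product} here.
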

\begin{proof}
The main point is to verify the continuity of $\kappa_X$ for $X$ fixed. We begin by noting that the obvious map \[\bigsqcup_{k>0}\bigsqcup_{n>0}  \bigsqcup_{n_1+\cdots+n_k=n}\left(\bigsqcap_{j=1}^kX^{n_j}\times\Delta^{n_j-1}\right)\times\Delta^{k-1}\to\mathcal{P}(\mathcal{P}(X))\] is a quotient map (we use Proposition \ref{prop:quotient product}). Thus, continuity follows from commutativity of the diagram
{\small\[\xymatrix{
\displaystyle\left(\bigsqcap_{j=1}^k X^{n_j}\times\Delta^{n_j-1}\right)\times\Delta^{k-1}\ar[d]\ar@{=}[r]^-\sim& \displaystyle X^{n}\times\left(\bigsqcap_{j=1}^k\Delta^{n_j-1}\right)\times\Delta^{k-1}\ar[r]&X^n\times\Delta^{n-1}\ar[d]\\
\mathcal{P}(\mathcal{P}(X))\ar[rr]^-{\kappa_X}&&\mathcal{P}(X),
}\]}where the homeomorphism is the canonical one and the upper righthand horizontal map is obtained by restricting a product of scaling maps on various Euclidean spaces, hence continuous.

Naturality of $\eta$ and $\kappa$, and the two monad identities $\kappa_{X}\circ(\mu_{X})_*=\kappa_{X}\circ\kappa_{\mathcal{P}(X)}$ and $\kappa_{X}\circ(\eta_X)_*=\id_{\mathcal{P}(X)}=\kappa_{X}\circ\eta_{\mathcal{P}(X)}$, are essentially immediate from the definitions, after recalling that $\eta_X$ sends $x$ to the equivalence class of $(x,1)$.
\end{proof}

Using this structure, we obtain a certain limited contravariant functoriality.

\begin{proposition}\label{prop:contravariant}
Let $p:E\to X$ be a degree $k$ covering map. The assignment \[\mu\mapsto \frac{1}{k}\sum_{x\in \mathrm{supp}(\mu)}\sum_{\tilde x\in p^{-1}(x)} \mu(x)\delta_{\tilde x}\] defines a continuous map $p^*:\mathcal{P}(X)\to \mathcal{P}(E)$, which is a section of $p_*$.
\end{proposition}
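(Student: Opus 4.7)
The plan is to verify well-definedness, check the section identity, and then establish continuity via the monad structure. Well-definedness is immediate: since $|p^{-1}(x)| = k$, the coefficients $\mu(x)/k$ summed over all $k|\mathrm{supp}(\mu)|$ terms give total mass $\sum_x \mu(x) = 1$. The section identity $p_* \circ p^* = \mathrm{id}_{\mathcal{P}(X)}$ follows from the direct computation
\[
p_*\bigl(p^*(\mu)\bigr) = \frac{1}{k}\sum_{x \in \mathrm{supp}(\mu)} \sum_{\tilde x \in p^{-1}(x)} \mu(x)\,\delta_{p(\tilde x)} = \sum_{x} \mu(x)\,\delta_x = \mu.
\]

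The main work is continuity. The key idea is to factor $p^*$ through the monad multiplication of Proposition \ref{prop:monad}, which will reduce the problem to continuity of a map out of the underlying space $X$. Consider the auxiliary assignment $\tilde p^* : X \to \mathcal{P}_k(E)$ sending $x$ to the uniform distribution $\frac{1}{k}\sum_{\tilde x \in p^{-1}(x)} \delta_{\tilde x}$ on the fiber. Unwinding the definition of $\kappa_E$, for $\mu = \sum_j t_j \delta_{x_j}$ written in lowest terms one finds
\[
\kappa_E\bigl(\mathcal{P}(\tilde p^*)(\mu)\bigr) = \kappa_E\Bigl(\sum_j t_j\, \delta_{\tilde p^*(x_j)}\Bigr) = \frac{1}{k}\sum_j \sum_{\tilde x \in p^{-1}(x_j)} t_j\, \delta_{\tilde x} = p^*(\mu),
\]
so $p^* = \kappa_E \circ \mathcal{P}(\tilde p^*)$. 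Granted continuity of $\tilde p^*$, the continuity of $p^*$ is then immediate from functoriality of $\mathcal{P}$ (Theorem \ref{thm:point set}) and continuity of $\kappa_E$ (Proposition \ref{prop:monad}).

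It remains to prove continuity of $\tilde p^*$, for which I would argue locally using the covering structure. Around any $x_0 \in X$ choose an evenly covered neighborhood $U$ with $p^{-1}(U) = \bigsqcup_{i=1}^k V_i$ and local inverses $s_i: U \to V_i$. Over $U$ the map $\tilde p^*$ factors as the continuous assignment $x \mapsto (s_1(x), \ldots, s_k(x), 1/k, \ldots, 1/k)$ into $E^k \times \Delta^{k-1}$ followed by the quotient map to $\mathcal{P}(E)$. Different choices of ordering of the sections on overlapping evenly covered neighborhoods yield the same image in $\mathcal{P}(E)$ by the $\Sigma_k$-invariance relation of Definition \ref{def:probability measures}, so these local descriptions patch together into a single continuous map on $X$.

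The main obstacle is really the conceptual one of finding the factorization $p^* = \kappa_E \circ \mathcal{P}(\tilde p^*)$; once it is in hand, everything else is either the already-established structural functoriality of $\mathcal{P}$ or a standard local-to-global continuity argument carried out in the convenient category.
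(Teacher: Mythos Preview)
Your proof is correct and follows essentially the same strategy as the paper: factor $p^*$ through $\kappa_E$ applied to the functorial image of the map sending $x$ to the uniform measure on $p^{-1}(x)$. The paper isolates the continuity of this latter map as a separate lemma (Lemma~\ref{lem:symmetric continuity}, continuity of $p^{-1}:X\to\mathrm{Sym}^k(E)$, followed by the barycenter inclusion into $\mathcal{P}(E)$), treating compact $X$ by exactly your local-trivialization argument and then extending to general $X$ via the filtered colimit over compact subsets---a step worth flagging, since an evenly covered open $U\subseteq X$ need not itself be convenient, so continuity of your map $U\to E^k\times\Delta^{k-1}$ into the \emph{convenient} product is not quite automatic from continuity of the components.
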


\begin{lemma}\label{lem:symmetric continuity}
Let $p:E\to X$ be a degree $k$ covering map. The assignment $x\mapsto p^{-1}(x)$ defines a continuous map $p^{-1}:X\to \mathrm{Sym}^k(E)$, where $\mathrm{Sym}^k(E):=E^k/\Sigma_k$ is the $k$th symmetric power.
\end{lemma}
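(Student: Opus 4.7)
The plan is to argue locally using the defining property of a covering map. Fix $x_0\in X$, and choose an evenly covered open neighborhood $U\subseteq X$ with $p^{-1}(U)=\bigsqcup_{i=1}^k V_i$, where each restriction $p|_{V_i}:V_i\to U$ is a homeomorphism with inverse $s_i:U\to V_i\subseteq E$. The tuple $(s_1,\ldots, s_k)$ of continuous local sections defines a continuous map $\widetilde{s}_U:U\to E^k$. Postcomposing with the canonical quotient $q:E^k\to \mathrm{Sym}^k(E)$ yields a continuous map $f_U:=q\circ \widetilde{s}_U:U\to\mathrm{Sym}^k(E)$ whose value at $x$ is the unordered tuple $\{s_1(x),\ldots,s_k(x)\}=p^{-1}(x)$.

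Next I would verify that these local recipes glue to a single continuous map $p^{-1}$. Given another evenly covered neighborhood $U'$ with sections $s'_1,\ldots, s'_k$, and any $x\in U\cap U'$, the multisets $\{s_1(x),\ldots,s_k(x)\}$ and $\{s'_1(x),\ldots, s'_k(x)\}$ both equal $p^{-1}(x)$ as subsets of $E$, so $f_U|_{U\cap U'}=f_{U'}|_{U\cap U'}$. Since $X$ is covered by such neighborhoods and continuity is local on the source, the assignment $x\mapsto p^{-1}(x)$ defines a continuous map $X\to \mathrm{Sym}^k(E)$.

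The only point requiring any caution is that all spaces are to be taken in Steenrod's convenient category, as declared in the conventions. This is harmless here: the finite product $E^k$ is formed the same way in both the convenient and classical categories, the action of $\Sigma_k$ is by homeomorphisms, and the quotient $\mathrm{Sym}^k(E)=E^k/\Sigma_k$ is a quotient space in the ordinary sense. Thus the factorization $f_U=q\circ\widetilde{s}_U$ is continuous in the convenient category as soon as $\widetilde{s}_U$ is, which it is by the universal property of the product. I do not anticipate any substantive obstacle; the argument is a direct application of the local triviality of $p$ and the universal property of the symmetric quotient.
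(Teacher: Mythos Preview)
Your local-sections-and-glue argument is the natural classical proof, and it correctly establishes continuity of $p^{-1}$ as a map into the \emph{ordinary} symmetric power $E^k_{\mathrm{ord}}/\Sigma_k$. The gap is in your third paragraph: the assertion that ``the finite product $E^k$ is formed the same way in both the convenient and classical categories'' is false in general. Proposition~\ref{prop:locally compact} only guarantees this when a factor is locally compact, and $E$ need not be---indeed, the paper's motivating examples are covers of $BG$ for infinite $G$, which are infinite CW complexes and hence not locally compact. In that regime the convenient product $E^k$ can carry a strictly finer topology than the ordinary one, and consequently $\mathrm{Sym}^k(E)$ (defined as the quotient of the convenient product) can be strictly finer than $E^k_{\mathrm{ord}}/\Sigma_k$. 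Your appeal to the universal property of the product to get continuity of $\widetilde s_U$ into the convenient $E^k$ also fails, since $U$ need not be convenient (open subsets of convenient spaces need not be convenient, as the paper notes).

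This is precisely the subtlety the paper's proof is engineered to avoid. The paper first treats the case $X$ compact, where $E$ is compact and hence locally compact, so the two products agree and your style of argument goes through; it then bootstraps to general $X$ by writing $X=\colim_{K}K$ and $\mathrm{Sym}^k(E)=\colim_K\mathrm{Sym}^k(p^{-1}(K))$ over compact $K\subseteq X$, using that $\mathrm{Sym}^k$ preserves filtered colimits in the convenient category. Your argument can be repaired along similar lines---having shown $p^{-1}:X\to E^k_{\mathrm{ord}}/\Sigma_k$ is continuous, one uses convenience of $X$ to test on compact $K\subseteq X$, where the image lands in $\mathrm{Sym}^k(p^{-1}(K))$ and the two symmetric-power topologies agree---but that repair is essentially the paper's second paragraph, not something that comes for free.
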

\begin{proof}
Assume first that $X$ is compact, so that $E$ is as well. In this case, $\mathrm{Sym}^k(E)$ carries the quotient topology from the ordinary Cartesian power $E^k$. Fix $x_0\in X$, and choose an open neighborhood $p^{-1}(x_0)\in  U\subseteq \mathrm{Sym}^k(E)$. Choosing an ordering $p^{-1}(x_0)=\{\tilde x_i\}_{i=1}^k$, continuity of the projection $\pi:E^k\to \mathrm{Sym}^k(E)$ grants open neighborhoods $x_i\in U_i\subseteq E$ such that $\pi(\sqcap_{i=1}^k U_i)\subseteq U$. Since $p$ is a covering map, we may assume without loss of generality that the $U_i$ are pairwise disjoint, that $V:=p(U_i)$ is independent of $i$, and that $p|_{U_i}$ is a homeomorphism onto $V$ for each $i$. Since $V$ is open, and since the function in question maps $V$ into $U$, continuity follows.

In the general case, writing $\mathcal{K}$ for the set of compact subsets of $X$, ordered by inclusion, consider the following commutative diagram of continuous maps:
\[
\xymatrix{
\colim_{K\in\mathcal{K}}K\ar[d]\ar[rr]^-{\colim_{K\in\mathcal{K}} p^{-1}}&&\colim_{K\in\mathcal{K}}\mathrm{Sym}^k(p^{-1}(K))\ar[d]\\
X\ar@{-->}[rr]&&\mathrm{Sym}^k(E)
}
\] Since $X$ is covenient, the lefthand vertical arrow is a homeomorphism. To see that the righthand vertical arrow is so as well, we note that $\mathrm{Sym}^k$ preserves sifted, hence filtered, colimits by Proposition \ref{prop:cartesian}, and that the natural map $\colim_{K\in\mathcal{K}}p^{-1}(K)\to E$ is a homeomorphism, since $E$ is convenient and any compact subset of $E$ lies in one of the form $p^{-1}(K)$. Thus, the continuous dashed filler is determined, and it is easy to check that it coincides with $p^{-1}$ as a set function.
\end{proof}

\begin{proof}[Proof of Proposition \ref{prop:contravariant}] One checks easily that the function $p^*$ coincides with the composite
\[\mathcal{P}(X)\xrightarrow{(p^{-1})_*} \mathcal{P}(\mathrm{Sym}^k(E))\to \mathcal{P}(E^k\times_{\Sigma_k}\Delta^{k-1})\to\mathcal{P}(\mathcal{P}(E))\xrightarrow{\kappa_E} \mathcal{P}(E),\] where the second arrow is induced by the inclusion of the barycenter with Cartesian coordinates $(e_1,\ldots, e_k,\frac{1}{k},\ldots, \frac{1}{k})$. The first and last arrows are continuous by Lemma \ref{lem:symmetric continuity} and Proposition \ref{prop:monad}, respectively, and the second and third are values of the functor $\mathcal{P}$ on continuous maps. Continuity follows, and the identity $p_*\circ p^*=\id_{\mathcal{P}(X)}$ is immediate from the definition.
\end{proof}

We note that, in the setting of Proposition \ref{prop:contravariant}, we have the inclusion $p^*(\mathcal{P}_n(B))\subseteq \mathcal{P}_{kn}(E)$.

There is also a kind of external product on measures. 

\begin{proposition}\label{prop:external product}
Let $X$ and $Y$ be topological spaces. The assignment 
\[
\left(\sum_{i=1}^nt_i\delta_{x_i},\sum_{j=1}^ps_j\delta_{y_j}\right)\mapsto \sum_{i=1}^n\sum_{j=1}^pt_is_j\delta_{(x_i,y_j)}
\] defines a continuous map $\boxtimes:\mathcal{P}(X)\times\mathcal{P}(Y)\to \mathcal{P}(X\times Y)$, which admits a canonical retraction.
\end{proposition}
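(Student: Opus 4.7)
The plan has three parts, corresponding to well-definedness, continuity, and the retraction.

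Well-definedness. The formula is symmetric in the two factors and bilinear in the coefficients, so it suffices to check that each of the three equivalence relations of Definition \ref{def:probability measures}, applied to the first coordinate say, is sent to a relation in $\mathcal{P}(X\times Y)$. Relation (1) corresponds to a permutation of the ``rows'' of pairs $\{(x_i,y_j)\}_j$ and is handled by relation (1) in the target; relation (2), which merges coincident $x_i$'s, is handled by $p$ applications of relation (2) in the target applied to the pairs $(x_{n-1},y_j)=(x_n,y_j)$; and relation (3) is handled analogously. The check for the second coordinate is symmetric.

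Continuity. The strategy is to lift through the defining quotient. On each summand $X^n\times\Delta^{n-1}\times Y^p\times\Delta^{p-1}$, define a continuous map to $(X\times Y)^{np}\times\Delta^{np-1}$ by rearranging coordinates into a fixed ordering of pairs $(x_i,y_j)$ and sending the barycentric coordinates to $(t_is_j)_{i,j}$ (a point of $\Delta^{np-1}$ since $\sum_{i,j}t_is_j=1$), then post-compose with the canonical quotient to $\mathcal{P}(X\times Y)$. Because the product of quotient maps of convenient spaces is again a quotient map (Proposition \ref{prop:quotient product}), the induced map
\[
\bigsqcup_{n,p>0} X^n\times\Delta^{n-1}\times Y^p\times\Delta^{p-1}\longrightarrow \mathcal{P}(X)\times\mathcal{P}(Y)
\]
is itself a quotient map, so well-definedness promotes the continuous lift just described to a continuous map $\boxtimes$ on the quotient.

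Retraction. Functoriality of $\mathcal{P}$ applied to the two projections gives a continuous map
\[
r:=\bigl((\pi_X)_*,(\pi_Y)_*\bigr):\mathcal{P}(X\times Y)\longrightarrow \mathcal{P}(X)\times\mathcal{P}(Y).
\]
A direct computation shows
\[
r\bigl(\boxtimes(\mu,\mu')\bigr)=\Bigl(\sum_i\bigl(\textstyle\sum_j s_j\bigr)t_i\delta_{x_i},\sum_j\bigl(\textstyle\sum_i t_i\bigr)s_j\delta_{y_j}\Bigr)=(\mu,\mu'),
\]
using $\sum_it_i=\sum_js_j=1$, so $r\circ\boxtimes=\id$ and $r$ is the desired retraction.

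The main obstacle is continuity: the map $\boxtimes$ is specified only on representatives, so one must be sure that the target of the universal property really is $\mathcal{P}(X)\times\mathcal{P}(Y)$ with its product topology. This is precisely the place where the convenient-space framework earns its keep, since in general categories of topological spaces the product of two quotient maps need not be a quotient.
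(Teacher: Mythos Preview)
Your proof is correct and follows essentially the same approach as the paper: continuity is obtained from the quotient description together with Proposition~\ref{prop:quotient product}, and the retraction is the map induced by the two projections. The paper's proof is terser but makes exactly these two moves.
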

\begin{proof}
Continuity is essentially immediate from properties of the quotient topology, together with Proposition \ref{prop:quotient product}, and it is easily checked that a retraction is given by the map $\mathcal{P}(X\times Y)\xrightarrow{} \mathcal{P}(X)\times\mathcal{P}(Y)$ induced by the projections onto $X$ and $Y$.
\end{proof}

\section{Relative probability measures}

The invariants we wish to define are special cases of a general invariant of maps, for which we require a relativization of the preceding ideas.

\begin{definition}
Let $f:X\to Y$ be a continuous map with $X$ convenient and $Y$ Hausdorff. The space of \emph{$f$-relative measures of finite support} on $X$ is the subspace 
\[\mathcal{P}(f)=\{\mu\in \mathcal{P}(X): |f(\mathrm{supp}(\mu))|=1\}.\]
\end{definition}

Although we do not require that $Y$ be convenient in this definition, we may always assume so; indeed, if $\tilde f: X\to k(Y)$ is the adjunct of $f$, then $\mathcal{P}(f)=\mathcal{P}(\tilde f)$ as subspaces of $\mathcal{P}(X)$.

\begin{lemma}\label{lem:relative closed}
For any $f:X\to Y$, the subspace $\mathcal{P}(f)\subseteq \mathcal{P}(X)$ is closed.
\end{lemma}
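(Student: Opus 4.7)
The plan is to unwind the quotient topology on $\mathcal{P}(X)$ back to the defining coproduct, where the condition becomes a concrete open/closed property that we can check componentwise.

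First, I would replace $Y$ with its $k$-ification, so that we may assume $Y$ is convenient and Hausdorff (the pullback remark following the definition means this does not change $\mathcal{P}(f)$). Then I would argue that $\mathcal{P}(f)\subseteq \mathcal{P}(X)$ is closed if and only if, for each $n>0$, the preimage
\[
Z_n:=q_n^{-1}(\mathcal{P}(f))\subseteq X^n\times\Delta^{n-1}
\]
is closed, where $q_n:X^n\times\Delta^{n-1}\to \mathcal{P}(X)$ is the restriction of the defining quotient map. Indeed, $\bigsqcup_{n>0}X^n\times\Delta^{n-1}\to \mathcal{P}(X)$ is a quotient map by construction, and a coproduct of spaces is closed precisely when it is closed in each summand.

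Next, I would unwind what $Z_n$ is. A tuple $(x_1,\ldots,x_n,t_1,\ldots,t_n)$ lies in $Z_n$ exactly when the support of $\sum_i t_i\delta_{x_i}$ maps to a single point of $Y$, i.e., whenever $t_i>0$ and $t_j>0$, one has $f(x_i)=f(x_j)$. Passing to complements,
\[
(X^n\times\Delta^{n-1})\setminus Z_n=\bigcup_{1\leq i\neq j\leq n}A_{ij},
\]
where $A_{ij}=\{(\vec x,\vec t): t_i>0,\ t_j>0,\ f(x_i)\neq f(x_j)\}$. Each $A_{ij}$ is clearly open: the conditions $t_i>0$ and $t_j>0$ cut out open subsets of $\Delta^{n-1}$, while the condition $f(x_i)\neq f(x_j)$ is the preimage of the open complement of the diagonal in $Y\times Y$ under the continuous map $X^n\to Y\times Y$ given by $(x_1,\ldots,x_n)\mapsto (f(x_i),f(x_j))$, using the Hausdorff property of $Y$.

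The main (modest) obstacle is just checking that the quotient-topology reduction in the first step is legitimate in the convenient setting; this is immediate once one observes that a quotient map in convenient spaces reflects closed subsets in the usual way, and that a subset of a coproduct is closed iff it meets each summand in a closed set. Conclude that $Z_n$ is closed in $X^n\times\Delta^{n-1}$ for every $n$, hence $\mathcal{P}(f)$ is closed in $\mathcal{P}(X)$.
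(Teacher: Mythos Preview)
Your proof is correct but takes a different route from the paper. You unwind the quotient topology on $\mathcal{P}(X)$ and check closedness of the preimage in each summand $X^n\times\Delta^{n-1}$ by hand, writing the complement as a finite union of sets $A_{ij}$ that are visibly open once $Y$ is Hausdorff. The paper instead argues in one line using already-established structure: after replacing $Y$ by $k(Y)$, it observes that $\mathcal{P}(f)=f_*^{-1}(\mathcal{P}_1(Y))$, where $f_*:\mathcal{P}(X)\to\mathcal{P}(Y)$ is the continuous functorial pushforward and $\mathcal{P}_1(Y)\subseteq\mathcal{P}(Y)$ is closed (a fact recorded immediately after Definition~\ref{def:support}). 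Your approach is self-contained and avoids appealing to the separately stated closedness of $\mathcal{P}_1(Y)$, at the cost of some length; the paper's approach is shorter and makes the lemma a formal consequence of the functoriality machinery it has just set up.
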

\begin{proof}
Without loss of generality, we may take the target of $f$ to be convenient, in which case $\mathcal{P}(f)$ is the preimage under $f_*$ of the closed set $\mathcal{P}_1(Y)$.
\end{proof}

In particular, it follows that $\mathcal{P}(f)$ is always convenient. Our previous discussion of functoriality implies immediately that the construction $f\mapsto\mathcal{P}(f)$ extends to a functor on the arrow category of convenient space, which sends relative homotopy to relative homotopy. Likewise, there is the stratification $\mathcal{P}_n(f):=\mathcal{P}_n(X)\cap \mathcal{P}(f)$. Note, moreover, that $\mathcal{P}_1(f)=\mathcal{P}_1(X)$.

\begin{lemma}
Let $f:X\to Y$ be a continuous map. The assignment $\mu\mapsto f(\mathrm{supp}(\mu))$ defines a continuous map $p:\mathcal{P}(f)\to Y$.
\end{lemma}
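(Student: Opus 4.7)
The plan is to realize $p$ as a composition of continuous maps already available to us. The key observation is that if $\mu \in \mathcal{P}(f)$ and we write $\mu = \sum_{i=1}^n t_i \delta_{x_i}$ in lowest terms, then by definition all of the values $f(x_i)$ coincide with a single point $y \in Y$, so
\[
f_*(\mu) = \sum_{i=1}^n t_i \delta_{f(x_i)} = \delta_y = \eta_Y(y).
\]
Hence $f_*$ carries $\mathcal{P}(f)$ into $\mathcal{P}_1(Y)$, and the set-theoretic map $p$ that we wish to define is precisely $\eta_Y^{-1} \circ f_*|_{\mathcal{P}(f)}$.

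Next, I would invoke the pieces we already have. The inclusion $\mathcal{P}(f) \hookrightarrow \mathcal{P}(X)$ is continuous (it is an inclusion of a subspace, and closed by Lemma \ref{lem:relative closed}, though closedness is not needed here). The pushforward $f_* : \mathcal{P}(X) \to \mathcal{P}(Y)$ is continuous by the extended functoriality of $\mathcal{P}$ recalled just before Theorem \ref{thm:point set}. Finally, the example preceding Example \ref{example:discrete} tells us that $\eta_Y : Y \to \mathcal{P}_1(Y)$ is a homeomorphism, so its inverse is continuous. Composing, we conclude that $p$ is continuous.

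There is essentially no obstacle here beyond unpacking the definitions; the only thing worth verifying carefully is that $p$ genuinely coincides with the stated assignment $\mu \mapsto f(\mathrm{supp}(\mu))$, under the convention that a singleton subset of $Y$ is identified with its unique element. This identification is legitimate precisely because membership in $\mathcal{P}(f)$ is the assertion $|f(\mathrm{supp}(\mu))| = 1$, so $p$ is well-defined as a map to $Y$ and agrees with the composite above on the nose.
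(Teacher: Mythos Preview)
Your proof is correct and follows essentially the same approach as the paper: both arguments realize $p$ as the dashed filler in the square with $\mathcal{P}(f)\hookrightarrow\mathcal{P}(X)\xrightarrow{f_*}\mathcal{P}(Y)$ and $Y\xrightarrow{\eta_Y}\mathcal{P}(Y)$, using that $\eta_Y$ is a topological embedding (equivalently, a homeomorphism onto $\mathcal{P}_1(Y)$) to conclude continuity of the filler.
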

\begin{proof}
From the definition of $\mathcal{P}(f)$ and the fact that $\eta_Y$ is a topological embedding, the dashed filler exists in the following commuting diagram of continuous maps:
\[\xymatrix{
\mathcal{P}(f)\ar[r]^-\subseteq\ar@{-->}[d]&\mathcal{P}(X)\ar[d]^-{f_*}\\
Y\ar[r]^-{\eta_Y}&\mathcal{P}(Y)
}\] Since this dashed filler coincides with the function in question, the claim follows.
\end{proof}

This map, which we refer to generically as ``the projection'' is of fundamental importance to us. We note that a pair of maps $f_i:X_i\to Y$ and a map $g:X_1\to X_2$ with $f_2\circ g=f_1$ induce a commutative diagram 
\[\xymatrix{
\mathcal{P}(f_1)\ar[dr]_-{p_{f_1}}\ar[rr]^-{g_*}&&\mathcal{P}(f_2)\ar[dl]^-{p_{f_2}}\\
&Y
}
\] Using this structure, we may formulate a relative external product of measures.

\begin{proposition}\label{prop:pullback}
Let $f_i:X_i\to Y$ be continuous maps and $g:X_1\times_Y X_2 \to Y$ the canonical map from the pullback. The formula of Proposition \ref{prop:external product} defines a continuous map $\boxtimes_Y:\mathcal{P}(f_1)\times_Y\mathcal{P}(f_2)\to \mathcal{P}(g),$ which admits a canonical retraction.
\end{proposition}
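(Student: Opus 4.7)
The plan is to exhibit $\boxtimes_Y$ as a co-restriction of the absolute external product $\boxtimes$ of Proposition \ref{prop:external product}, relying on Theorem \ref{thm:point set}'s preservation of closed embeddings to justify landing in the relative space. I would begin with a straightforward well-definedness check: a point $(\mu,\nu)$ of $\mathcal{P}(f_1)\times_Y\mathcal{P}(f_2)$ determines a common $y\in Y$ with $f_1(\mathrm{supp}(\mu))=f_2(\mathrm{supp}(\nu))=\{y\}$, so writing $\mu=\sum_i t_i\delta_{x_i}$ and $\nu=\sum_j s_j\delta_{y_j}$, each pair $(x_i,y_j)$ lies in $X_1\times_Y X_2$ and is sent by $g$ to $y$, whence the formula of Proposition \ref{prop:external product} produces an element of $\mathcal{P}(g)$.

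Continuity should follow from the factorization
\[
\mathcal{P}(f_1)\times_Y\mathcal{P}(f_2)\hookrightarrow \mathcal{P}(X_1)\times\mathcal{P}(X_2)\xrightarrow{\boxtimes}\mathcal{P}(X_1\times X_2),
\]
the first arrow being the subspace inclusion and the second continuous by Proposition \ref{prop:external product}. The well-definedness check shows the composite takes values in $\mathcal{P}(g)$. Since $Y$ is Hausdorff, the inclusion $X_1\times_Y X_2\hookrightarrow X_1\times X_2$ is closed; by Theorem \ref{thm:point set} so is $\mathcal{P}(X_1\times_Y X_2)\hookrightarrow\mathcal{P}(X_1\times X_2)$, and by Lemma \ref{lem:relative closed} so is $\mathcal{P}(g)\hookrightarrow\mathcal{P}(X_1\times_Y X_2)$. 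Since a continuous map whose image lies in a closed subspace co-restricts continuously, $\boxtimes_Y$ is continuous.

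For the canonical retraction, I would push forward along the two pullback projections $\pi_i\colon X_1\times_Y X_2\to X_i$, obtaining a continuous map $(\pi_{1*},\pi_{2*})\colon\mathcal{P}(X_1\times_Y X_2)\to\mathcal{P}(X_1)\times\mathcal{P}(X_2)$ from the functoriality in Theorem \ref{thm:point set}. On $\mathcal{P}(g)$, both components of this map have support in the same fiber over $y$, so the restriction factors through $\mathcal{P}(f_1)\times_Y\mathcal{P}(f_2)$; verifying the retraction identity amounts to the routine telescoping $\sum_{i,j}t_is_j\delta_{x_i}=\sum_i t_i\delta_{x_i}$ using $\sum_j s_j=1$, mirroring the unrelative case of Proposition \ref{prop:external product}.

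I do not expect a genuine obstacle here. The only point meriting care is topological bookkeeping, namely confirming that the fibered-product topologies on both sides really are subspace topologies inherited from the absolute products (so that the factorization above applies) and, if needed, tacitly replacing $Y$ by its convenient reflection as in the comment following Lemma \ref{lem:relative closed}. Beyond this, the argument is a mechanical combination of Proposition \ref{prop:external product}, Theorem \ref{thm:point set}, and Lemma \ref{lem:relative closed}.
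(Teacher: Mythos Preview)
Your proposal is correct and follows essentially the same route as the paper: factor through the absolute external product $\boxtimes$ of Proposition~\ref{prop:external product}, use that $\mathcal{P}(g)\hookrightarrow\mathcal{P}(X_1\times X_2)$ is a (closed) embedding via Theorem~\ref{thm:point set} applied to the closed inclusion $X_1\times_Y X_2\hookrightarrow X_1\times X_2$ together with Lemma~\ref{lem:relative closed}, and obtain the retraction by restricting the one from Proposition~\ref{prop:external product}. The paper organizes this as a single commutative diagram and cites Corollary~\ref{cor:pullback inclusion} for the pullback-to-product embedding (the point you flag under ``topological bookkeeping''), but the substance is identical.
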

\begin{proof}
Consider the following commutative diagram of continuous maps:
\[
\xymatrix{
\mathcal{P}(f_1)\times_Y\mathcal{P}(f_2)\ar[d]_-\subseteq\ar@{-->}[r]^-{\boxtimes_Y}&\mathcal{P}(g)\ar[d]^-\subseteq\\
\mathcal{P}(f_1)\times\mathcal{P}(f_2)\ar[d]&\mathcal{P}(X_1\times_Y X_2)\ar[d]\\
\mathcal{P}(X_1)\times\mathcal{P}(X_2)\ar[r]^-{\boxtimes}&\mathcal{P}(X_1\times X_2).
}
\] As a product of embeddings, the bottom left map is also an embedding, and the bottom right map is an embedding by Theorem \ref{thm:point set} and Corollary \ref{cor:pullback inclusion}. Thus, the dashed filler, which is easily checked to be well-defined, is continuous. Similarly, it is easily checked that the retraction of $\boxtimes$ constructed in Proposition \ref{prop:external product} maps the image of the righthand vertical composite into the image of the lefthand vertical composite, hence restricts to a retraction of $\boxtimes_Y$.
\end{proof}

\begin{corollary}\label{cor:base change}
Let $f:X\to Y$ and $i:A\to Y$ be continuous maps with $i$ injective. There is a canonical natural homeomorphism $A\times_Y\mathcal{P}(f)\cong \mathcal{P}(f_A)$ over $A$, where $f_A:A\times_Y X\to A$ is the canonical map.
\end{corollary}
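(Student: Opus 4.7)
The plan is to realize this corollary as a special case of Proposition \ref{prop:pullback}. Apply that proposition to the pair $i:A\to Y$ and $f:X\to Y$, noting that the canonical map from the pullback is exactly $g=f\circ\pi_X=i\circ f_A$, where $\pi_X:A\times_Y X\to X$ is the second projection. This yields a continuous map $\boxtimes_Y:\mathcal{P}(i)\times_Y\mathcal{P}(f)\to\mathcal{P}(g)$ together with a canonical retraction.

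The next step is to perform two identifications using injectivity of $i$. First, since $i$ is injective, the condition $|i(\mathrm{supp}(\mu))|=1$ on $\mu\in\mathcal{P}(A)$ is equivalent to $|\mathrm{supp}(\mu)|=1$; hence $\mathcal{P}(i)=\mathcal{P}_1(A)$, which is homeomorphic to $A$ via $\eta_A$. Consequently, the source of $\boxtimes_Y$ identifies with $A\times_Y\mathcal{P}(f)$. Second, for $\mu\in\mathcal{P}(A\times_Y X)$ we have $g(a,x)=i(a)$, so again by injectivity the condition $|g(\mathrm{supp}(\mu))|=1$ is equivalent to $|f_A(\mathrm{supp}(\mu))|=1$; hence $\mathcal{P}(g)=\mathcal{P}(f_A)$ as subspaces of $\mathcal{P}(A\times_Y X)$.

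Under these identifications, the map $\boxtimes_Y$ has the explicit form
\[
(a,\textstyle\sum_{i=1}^n t_i\delta_{x_i})\;\longmapsto\;\delta_a\boxtimes\textstyle\sum_{i=1}^n t_i\delta_{x_i}=\sum_{i=1}^n t_i\delta_{(a,x_i)},
\]
which makes sense since $f(x_i)=i(a)$ forces $(a,x_i)\in A\times_Y X$. The retraction from Proposition \ref{prop:pullback}, induced by the product projections, sends $\sum_i t_i\delta_{(a,x_i)}$ to $(\eta_A(a),\sum_i t_i\delta_{x_i})$, corresponding to $(a,\sum_i t_i\delta_{x_i})$ under the identification $\mathcal{P}(i)\cong A$. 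A direct verification on symbols shows that this retraction is a two-sided inverse of $\boxtimes_Y$, so both maps are homeomorphisms.

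Commutativity over $A$ is immediate from the formula, and naturality in the data $f$ and $i$ follows from the naturality of $\boxtimes_Y$ and of all identifications used. I do not anticipate any real obstacle: the only subtle point is recognizing that injectivity of $i$ collapses both $\mathcal{P}(i)$ down to $A$ and $\mathcal{P}(g)$ down to $\mathcal{P}(f_A)$, after which everything follows mechanically from Proposition \ref{prop:pullback}.
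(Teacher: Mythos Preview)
Your proof is correct and follows essentially the same route as the paper: apply Proposition \ref{prop:pullback} with $f_1=i$ and $f_2=f$, use injectivity of $i$ to identify $\mathcal{P}(i)=\mathcal{P}_1(A)\cong A$ and $\mathcal{P}(g)=\mathcal{P}(f_A)$, and then observe that $\boxtimes_Y$ is bijective so that its continuous retraction is a two-sided inverse. The paper's version is terser---it simply asserts bijectivity ``follows easily from injectivity of $i$''---whereas you spell out the explicit formulas, but the argument is the same.
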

\begin{proof}
The assumption of injectivity implies that $\mathcal{P}(i)=\mathcal{P}_1(A)$ as subspaces of $\mathcal{P}(A)$ and that $\mathcal{P}(g)=\mathcal{P}(f_A)$ as subspaces of $\mathcal{P}(A\times_Y X)$. Thus, it suffices to establish that the map $\boxtimes_Y$ of Proposition \ref{prop:pullback} is bijective in this case, which again follows easily from injectivity of $i$.
\end{proof}

\begin{corollary}\label{cor:product decomposition}
Let $\pi_i:Y\times Z\to Y$ denote the projection onto the $i$th factor. There is a canonical natural homeomorphism $\mathcal{P}(\pi_1)\cong Y\times \mathcal{P}(Z)$ over $Y$.
\end{corollary}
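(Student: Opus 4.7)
The plan is to construct the homeomorphism by hand using the external product, the unit of the monad, and the projection map, then verify everything from previously established pieces. No deep new input is needed; this is really an assembly of existing tools.

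First I would define the forward map $\phi\colon Y\times\mathcal{P}(Z)\to\mathcal{P}(\pi_1)$ as the composite
\[
Y\times\mathcal{P}(Z)\xrightarrow{\eta_Y\times\id}\mathcal{P}(Y)\times\mathcal{P}(Z)\xrightarrow{\boxtimes}\mathcal{P}(Y\times Z),
\]
and observe that the image lies in $\mathcal{P}(\pi_1)$ because the support of $\eta_Y(y)\boxtimes\mu$ is contained in $\{y\}\times\mathrm{supp}(\mu)$, which projects to the single point $y$. Continuity is immediate from Proposition \ref{prop:external product} and continuity of $\eta_Y$, and the map is clearly over $Y$ since the projection $p$ recovers $y$.

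Next I would define the reverse map $\psi\colon\mathcal{P}(\pi_1)\to Y\times\mathcal{P}(Z)$ by $\psi(\nu)=(p(\nu),(\pi_2)_*(\nu))$, where $p$ is the projection constructed just before Proposition \ref{prop:pullback} and $(\pi_2)_*$ is the functorial pushforward along $\pi_2\colon Y\times Z\to Z$, restricted to $\mathcal{P}(\pi_1)$. Both components are continuous, so $\psi$ is continuous, and $\psi$ is also over $Y$ by construction.

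Finally, I would verify that $\phi$ and $\psi$ are mutually inverse by direct computation on representatives. Writing $\nu\in\mathcal{P}(\pi_1)$ in lowest terms as $\sum_i t_i\delta_{(y,z_i)}$ via Lemma \ref{lem:lowest terms}, one checks $\psi(\nu)=(y,\sum_i t_i\delta_{z_i})$ and then $\phi(y,\sum_i t_i\delta_{z_i})=\sum_i t_i\delta_{(y,z_i)}=\nu$; the other composition is analogous. Naturality in $Y$ and $Z$ is automatic from the naturality of $\eta$, $\boxtimes$, $p$, and $(\pi_2)_*$. The only mild subtlety is bookkeeping around coincidences in the $z_i$, which is precisely what the lowest-terms normal form handles, so I do not anticipate any real obstacle.
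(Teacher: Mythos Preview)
Your proposal is correct and follows essentially the same route as the paper. The paper's proof observes that $\boxtimes$ carries $\mathcal{P}_1(Y)\times\mathcal{P}(Z)$ bijectively onto $\mathcal{P}(\pi_1)$ and then invokes the canonical retraction of $\boxtimes$ from Proposition~\ref{prop:external product} (given by $((\pi_1)_*,(\pi_2)_*)$) together with the homeomorphism $\eta_Y\colon Y\cong\mathcal{P}_1(Y)$; your forward map $\phi$ is exactly this restriction of $\boxtimes$ precomposed with $\eta_Y\times\id$, and your explicit inverse $\psi=(p,(\pi_2)_*)$ is nothing other than that retraction restricted to $\mathcal{P}(\pi_1)$ and postcomposed with $\eta_Y^{-1}$ on the first factor.
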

\begin{proof}
It is easy to check that $\boxtimes: \mathcal{P}(Y)\times\mathcal{P}(Z)\to \mathcal{P}(Y\times Z)$ maps $\mathcal{P}_1(Y)\times\mathcal{P}(Z)$ bijectively onto $\mathcal{P}(\pi_1)$, so the claim follows from Proposition \ref{prop:external product} and fact that $\eta_Y$ is a homeomorphism.
\end{proof}

Using these results, we will be able to identify $\mathcal{P}(f)$ as a bundle in many cases.

\begin{definition}
Let $f:X\to Y$ be a continuous map. We say that $f$ is a \emph{convenient fiber bundle} with fiber $F$ if there is an open cover $\{U_\alpha\}$ of $Y$ and a collection of homeomorphisms $U_\alpha\times_Y X\cong U_\alpha\times F$ over $U_\alpha$.
\end{definition}

In this definition, the spaces $X$, $Y$, and $F$ are required to be convenient, as are the open neighborhoods $U_\alpha$ (note that convenience is not automatically inherited by open subsets). The indicated limits are, as always, the convenient limits. The language of transition functions, structure groups, and cocycles translates unchanged to the convenient setting.

Combining Corollaries \ref{cor:base change} and \ref{cor:product decomposition}, we conclude the following.

\begin{corollary}\label{cor:fiber bundle}
If $f:X\to Y$ is a convenient fiber bundle with fiber $F$ and discrete structure group $G$, then $p:\mathcal{P}(f)\to Y$ is a fiber bundle with fiber $\mathcal{P}(F)$ and structure group $G$. More specifically, if $\{g_{\alpha\beta}\}$ is a $G$-cocycle for $f$ defined on the trivializing cover $\{U_\alpha\}$, then it is also a $G$-cocycle for $p$.
\end{corollary}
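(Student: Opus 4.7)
The plan is to derive local triviality of $p\colon\mathcal{P}(f)\to Y$ by applying the two preceding corollaries in sequence to each member of the trivializing cover, and then to read off the cocycle from the naturality built into those corollaries.

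First, for each index $\alpha$, the open inclusion $i_\alpha\colon U_\alpha\hookrightarrow Y$ is injective, so Corollary \ref{cor:base change} provides a canonical homeomorphism $U_\alpha\times_Y\mathcal{P}(f)\cong \mathcal{P}(f_{U_\alpha})$ over $U_\alpha$, where $f_{U_\alpha}\colon U_\alpha\times_Y X\to U_\alpha$ is the restriction. The local trivialization $\varphi_\alpha\colon U_\alpha\times_Y X\cong U_\alpha\times F$ over $U_\alpha$ identifies $f_{U_\alpha}$ with the projection $\pi_1\colon U_\alpha\times F\to U_\alpha$. Functoriality of $\mathcal{P}$ on arrows then gives a homeomorphism $\mathcal{P}(f_{U_\alpha})\cong \mathcal{P}(\pi_1)$, and Corollary \ref{cor:product decomposition} identifies the latter with $U_\alpha\times\mathcal{P}(F)$ over $U_\alpha$. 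Composing, we obtain the desired local trivialization $\psi_\alpha\colon U_\alpha\times_Y\mathcal{P}(f)\cong U_\alpha\times\mathcal{P}(F)$ over $U_\alpha$.

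For the cocycle claim, I would unwind the transition function $\psi_\alpha\circ\psi_\beta^{-1}$ over $U_\alpha\cap U_\beta$. Since the transition function for $f$ has the form $(u,x)\mapsto (u,g_{\alpha\beta}(u)\cdot x)$, the naturality of the homeomorphisms supplied by Corollaries \ref{cor:base change} and \ref{cor:product decomposition}, together with the functoriality of $\mathcal{P}$, implies that the corresponding transition for $p$ has the form $(u,\mu)\mapsto (u,(g_{\alpha\beta}(u))_*\mu)$. The discreteness of $G$ ensures that the assignment $g\mapsto g_*$ is a group homomorphism $G\to\mathrm{Homeo}(\mathcal{P}(F))$ in the strict sense needed to interpret $\{g_{\alpha\beta}\}$ as a $G$-cocycle for $p$ on the same cover.

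The argument is essentially an exercise in stringing together naturality statements, so I anticipate no serious obstacle beyond a careful bookkeeping check that the canonical homeomorphism of Corollary \ref{cor:base change} is natural in the base along the inclusion $U_\alpha\cap U_\beta\hookrightarrow U_\alpha$ and that the isomorphism of Corollary \ref{cor:product decomposition} is natural in homeomorphisms of the fiber factor. Both facts are immediate from the explicit descriptions given in the proofs of those corollaries, so the argument should go through without incident.
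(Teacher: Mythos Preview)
Your argument is correct and follows exactly the route the paper indicates: the paper's proof consists of the single sentence ``Combining Corollaries \ref{cor:base change} and \ref{cor:product decomposition}, we conclude the following,'' and your proposal is precisely the unpacking of that combination, including the cocycle verification via naturality and the role of discreteness (which the paper relegates to a remark following the statement).
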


\begin{remark}
We restrict to discrete $G$ only in order to avoid the necessity of establishing the promotion of $\mathcal{P}$ to a topologically enriched functor.
\end{remark}

In many cases of interest, convenient fiber bundles are simply fiber bundles in the ordinary sense.

\begin{lemma}\label{lem:bundle comparison}
In either of the following situations, a map $f:X\to Y$ is a convenient fiber bundle with fiber $F$ if and only if it is a fiber bundle with fiber $F$:
\begin{enumerate}
\item if $Y$ is a CW complex and $F$ is locally compact; or
\item if $Y$ is locally compact.
\end{enumerate}
\end{lemma}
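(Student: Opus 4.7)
The plan is to reduce both cases to the basic observation that the convenient product and the ordinary product of topological spaces coincide whenever one factor is locally compact Hausdorff, a standard fact about $k$-spaces that presumably appears in the appendix. The notions of ``convenient fiber bundle'' and ``fiber bundle'' differ only in whether the local trivialization homeomorphisms $U_\alpha \times_Y X \cong U_\alpha \times F$ use convenient products/pullbacks on both sides or ordinary ones. Once one establishes that, under the stated hypotheses, these two constructions produce the same space, the trivializations transport freely between the two settings and the lemma follows in both directions.

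First I would handle the product $U_\alpha \times F$. In case (1), since $F$ is locally compact Hausdorff, the ordinary product $U_\alpha \times F$ is already a $k$-space for every $U_\alpha$ and hence agrees with the convenient product. In case (2), every point of $Y$ has a locally compact open neighborhood, so by refining any given trivializing open cover I may assume each $U_\alpha$ is locally compact Hausdorff; then again the convenient and ordinary products of $U_\alpha$ and $F$ agree.

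Next I would address the pullback $U_\alpha \times_Y X$. Because $U_\alpha \hookrightarrow Y$ is an open inclusion, the underlying set of both the convenient and ordinary pullback is $f^{-1}(U_\alpha)$. If $X$ is convenient, then $f^{-1}(U_\alpha)$ is an open subspace of a $k$-space, hence itself a $k$-space, so its subspace topology is already convenient and coincides with the convenient pullback. (If one starts from the ordinary-fiber-bundle side, one may first replace $X$ by its $k$-ification, which does not change the homeomorphism type of $f^{-1}(U_\alpha)$ since that subspace is already a $k$-space.) Thus both sides of the trivialization homeomorphism are identified in the two categories.

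With these identifications, a trivializing atlas in one sense is automatically a trivializing atlas in the other, and the equivalence is established. The main subtlety I anticipate is bookkeeping around which ambient category each space is being regarded in---particularly making sure that the open subspace $f^{-1}(U_\alpha)\subseteq X$ really is convenient without further $k$-ification, and that the refinement argument in case (2) produces an open cover whose members are simultaneously locally compact and admit trivializations inherited from the original cover. Both points reduce to routine facts about $k$-spaces and local compactness, so I do not expect a genuine obstacle.
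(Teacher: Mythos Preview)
Your overall strategy---reduce to the coincidence of convenient and ordinary products via local compactness of one factor---is the same as the paper's. But there is a genuine error in your treatment of the pullback, and a related omission in case~(1).

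You write: ``If $X$ is convenient, then $f^{-1}(U_\alpha)$ is an open subspace of a $k$-space, hence itself a $k$-space.'' This is false in general. Open subspaces of convenient spaces need not be convenient; the paper warns about exactly this point in the definition immediately preceding the lemma (``note that convenience is not automatically inherited by open subsets''). So your identification of the convenient pullback with the ordinary subspace $f^{-1}(U_\alpha)$ is unjustified as stated. The repair is not hard, but it is not the one you suggest: in case~(2), once $U_\alpha$ is locally compact, the ordinary pullback sits as a \emph{closed} subspace of the ordinary product $U_\alpha\times X$, which equals the convenient product by Proposition~\ref{prop:locally compact}; closed subspaces of convenient spaces are convenient, so the ordinary and convenient pullbacks agree. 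In case~(1) one can instead transfer convenience across the trivialization from $U_\alpha\times F$.

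The second gap is specific to case~(1). The definition of convenient fiber bundle requires the trivializing neighborhoods $U_\alpha$ themselves to be convenient, and you never arrange this: you only use local compactness of $F$ to match the two products. The paper's argument here is different and uses the hypothesis on $Y$ in an essential way---a CW complex has, at each point, a local basis of convenient (indeed contractible) open neighborhoods, so one may refine any trivializing cover to one by convenient opens. Without this refinement, you cannot even state that $f$ is a convenient fiber bundle over the given cover. Your refinement argument in case~(2) is fine, and in fact matches the paper's observation that every open subset of a locally compact Hausdorff space is convenient.
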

\begin{proof}
In the first case, the assumption on $Y$ guarantees that every point has a local basis of contractible, convenient open neighborhoods; in the second case, local compactness guarantees that every open subset of $Y$ is convenient. In both cases, local compactness guarantees that the convenient and ordinary product and pullback coincide. 
\end{proof}

\section{Analog sectional category}

We now define a general invariant of maps, of which the analog category and analog topological complexity will be special cases.

\begin{definition}\label{def:asc}
Let $f:X\to Y$ be a continuous map. The \emph{analog sectional category} of $f$, denoted $\mathrm{asecat}(f)$, is the least $n$ such that the projection $\mathcal{P}(f)\to Y$ admits a section with image lying in $\mathcal{P}_{n+1}(f)$.
\end{definition}

As the following result shows, the analog sectional category is a homotopy invariant in an appropriate sense.

\begin{proposition}\label{prop:fibration}
Consider the following commutative diagram of continuous maps:
\[\xymatrix{
X'\ar[d]_-{f'}\ar[r]^-{\tilde h}&X\ar[d]^-{f}\\
Y'\ar[r]^-h&Y
}\] If the vertical maps are fibrations and the horizontal maps homotopy equivalences, then $\mathrm{asecat}(f)=\mathrm{asecat}(f')$.
\end{proposition}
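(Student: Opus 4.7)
The plan is to prove $\mathrm{asecat}(f')\le\mathrm{asecat}(f)$; equality then follows by symmetry. The strategy is to transfer a minimal section of $p_f$ across the horizontal equivalences to obtain a section of $p_{f'}$ up to homotopy, and then rigidify it using the homotopy lifting property of $p_{f'}$. Throughout, $\tilde h_*\colon\mathcal{P}(f')\to\mathcal{P}(f)$ is a homotopy equivalence preserving the filtration by support size (Theorem \ref{thm:point set}).

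Using the fibration hypothesis and the standard theory of fiber homotopy equivalences (Dold), one first chooses a homotopy inverse $g\colon Y\to Y'$ of $h$ together with a map $\tilde g\colon X\to X'$ satisfying $f'\circ \tilde g=g\circ f$ which is a homotopy inverse of $\tilde h$. Explicitly: factor $\tilde h$ through the pullback as $X'\to h^*X\to X$; the second arrow is a homotopy equivalence as the pullback of a fibration along a homotopy equivalence, hence so is the first, and since both sides are fibrations over $Y'$, Dold's theorem promotes $X'\to h^*X$ to a fiber homotopy equivalence. A map $X\to h^*X$ covering $g$ is then constructed from the HLP for $f$ applied to a homotopy $h\circ g\simeq\id_Y$, and composition with a fiber-homotopy inverse yields $\tilde g$. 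Given a section $s\colon Y\to\mathcal{P}_{n+1}(f)$ with $n=\mathrm{asecat}(f)$, set $s':=\tilde g_*\circ s\circ h\colon Y'\to\mathcal{P}_{n+1}(f')$. The identity $f'\circ \tilde g=g\circ f$ yields $p_{f'}\circ \tilde g_*=g\circ p_f$, so that $p_{f'}\circ s'=g\circ h\simeq\id_{Y'}$: thus $s'$ is a section up to homotopy landing in $\mathcal{P}_{n+1}(f')$.

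To finish, we need the following lemma: \emph{if $f'$ is a Hurewicz fibration, then so is $p_{f'}\colon \mathcal{P}_{n+1}(f')\to Y'$.} Given a lifting function $\lambda\colon (Y')^I\times_{Y'}X'\to (X')^I$ for $f'$, the componentwise formula
\[
\Lambda\Bigl(\gamma,\,\textstyle\sum_i t_i\delta_{x_i}\Bigr)(t)=\sum_i t_i\,\delta_{\lambda(\gamma,x_i)(t)}
\]
defines a lifting function for $p_{f'}$: it is well-defined on equivalence classes (all three relations of Definition \ref{def:probability measures} are preserved, using that $\lambda$ is a function), it preserves the support filtration (since the assignment cannot increase support), and its continuity reduces, via Proposition \ref{prop:quotient product} and the quotient description of $\mathcal{P}$, to continuity of the obvious map out of $(Y')^I\times_{Y'}(X')^n\times\Delta^{n-1}\times I$. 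Granting this, lifting the homotopy $g\circ h\simeq \id_{Y'}$ with initial condition $s'$ yields a genuine section of $p_{f'}$ whose image remains in $\mathcal{P}_{n+1}(f')$. I expect the main obstacle to be the lemma above, in particular the careful verification that $\Lambda$ passes to the quotient continuously and preserves the subspace $\mathcal{P}(f')\subseteq \mathcal{P}(X')$; the remainder of the argument is essentially formal.
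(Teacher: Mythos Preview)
Your argument is correct, but it takes a detour that the paper avoids. You already construct the fiber homotopy inverse $k\colon h^*X\to X'$ over $Y'$; the paper uses precisely this map and stops there. Given $y'\in Y'$, write $s(h(y'))=\sum t_i\delta_{x_i}$; since each $(x_i,y')\in X\times_Y Y'$, the formula $y'\mapsto\sum t_i\delta_{k(x_i,y')}$ already lands in $\mathcal{P}_{n+1}(f')$ and is a \emph{genuine} section, because $k$ lies over $Y'$. Formally, the paper packages this as the composite
\[
Y'\xrightarrow{(s\circ h,\id)}\mathcal{P}(f)\times_Y Y'\xrightarrow{\id\times\eta}\mathcal{P}(f)\times_Y\mathcal{P}(h)\xrightarrow{\boxtimes_Y}\mathcal{P}(g')\xrightarrow{k_*}\mathcal{P}(f'),
\]
with $g'\colon X\times_Y Y'\to Y'$ the projection. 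No rigidification step is needed, and in particular the fibration lemma for $p_{f'}$ is never invoked.

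By contrast, you compose $k$ with a map $X\to h^*X$ covering a homotopy inverse $g$ of $h$, obtaining $\tilde g$ over $g$ rather than over $Y'$; this is what forces you into a section only up to homotopy and hence into the lemma that $\mathcal{P}_{n+1}(f')\to Y'$ is a Hurewicz fibration. That lemma is true and of independent interest, and your sketch can be made rigorous with the tools of the paper: using Proposition~\ref{prop:pullback} with $f_1=\mathrm{ev}_0\colon (Y')^I\to Y'$ and $f_2=f'$ gives a continuous map $(Y')^I\times_{Y'}\mathcal{P}(f')\to\mathcal{P}(\pi_1)$, then $\lambda_*$ maps to $\mathcal{P}((f')^I)$, and finally the evaluation $\mathcal{P}((X')^I)\times I\to\mathcal{P}(X')$ (continuous by Proposition~\ref{prop:quotient product}) restricts to $\mathcal{P}((f')^I)\times I\to\mathcal{P}(f')$. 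So your route works, but the paper's is shorter: it exploits that the fiber homotopy inverse is already over $Y'$, eliminating the need to correct anything.
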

\begin{proof}
Our assumptions imply that the natural map $X'\to X\times_Y Y'$ is a fiber homotopy equivalence over $Y'$, so we may choose a fiber homotopy inverse $k$. Writing $s:Y\to \mathcal{P}(f)$ for a section with image lying in $\mathcal{P}_{n+1}(f)$, consider the composite 
\[Y'\xrightarrow{(s\circ h,\id)}\mathcal{P}(f)\times_Y Y'\xrightarrow{\id\times\eta_{Y'}}\mathcal{P}(f)\times_Y\mathcal{P}(h)\xrightarrow{\boxtimes_Y}\mathcal{P}(g),\] where $g:X\times_Y Y'\to Y$ is the natural map. By inspection, this composite factors through the subspace $\mathcal{P}(g')\subseteq \mathcal{P}(g)$, where $g':X\times_Y Y'\to Y'$ is projection onto the second factor. Postcomposing with $k_*$ then yields a section $Y'\to \mathcal{P}(f')$ with image lying in $\mathcal{P}_{n+1}(f')$, and it follows that $\mathrm{asecat}(f')\leq \mathrm{asecat}(f)$. After replacing $h$ and $\tilde h$ by their homotopy inverses and using our assumption on $f$ to strictify the resulting homotopy commutative diagram, the reverse inequality follows from the same argument.
\end{proof}

Since replacements of maps by fibrations are unique up to fiberwise homotopy equivalence, we conclude that what one might call the \emph{derived} analog sectional category is well-defined.

\begin{corollary}\label{cor:derived}
Let $f:X\to Y$ be a map and $\tilde f$ a fibration weakly equivalent to $f$ in the arrow category. The quantity $\mathrm{asecat}(\tilde f)$ is independent of the choice of $\tilde f$.
\end{corollary}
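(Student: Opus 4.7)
The strategy is to reduce the statement to Proposition \ref{prop:fibration}. Given two fibrations $\tilde f_1, \tilde f_2$ each weakly equivalent to $f$ in the arrow category, my goal is to construct a zig-zag of commutative squares linking $\tilde f_1$ to $\tilde f_2$ in which every vertical map is a fibration and every horizontal map is a homotopy equivalence. Iterating Proposition \ref{prop:fibration} along such a zig-zag then forces $\mathrm{asecat}(\tilde f_1) = \mathrm{asecat}(\tilde f_2)$.

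First, I would concatenate the weak equivalences from $\tilde f_1$ to $f$ and from $f$ to $\tilde f_2$ to produce a zig-zag in the arrow category. At each intermediate vertex not already represented by a fibration, I would functorially replace the map by the path-space fibration $X \hookrightarrow X \times_Y Y^I \twoheadrightarrow Y$, which comes equipped with its canonical homotopy equivalence from the original. After this replacement, each vertex of the zig-zag is a fibration, and consecutive vertices are connected by a weak equivalence of squares.

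Second, I would upgrade each such weak equivalence to a square whose horizontal edges are honest homotopy equivalences. This rectification is the content of the principle already invoked in the remark preceding the statement: replacements of maps by fibrations are unique up to fiberwise homotopy equivalence. It is the Whitehead-style observation that a weak equivalence between fibrant objects in the (Str{\o}m or arrow-category Str{\o}m) model structure admits a homotopy inverse, with the fibrancy hypothesis supplied by the fibration condition on $\tilde f_i$ and on the path-space replacements.

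The main obstacle is carrying out this rectification within the convenient category and verifying that all intermediate objects satisfy the hypotheses of Proposition \ref{prop:fibration}. Theorem \ref{thm:point set} and the standard preservation properties of path objects guarantee that the path-space construction keeps us inside convenient spaces and respects the requisite fibration and homotopy-equivalence conditions; with those verifications in hand, iterated application of Proposition \ref{prop:fibration} along the zig-zag yields the desired equality $\mathrm{asecat}(\tilde f_1) = \mathrm{asecat}(\tilde f_2)$.
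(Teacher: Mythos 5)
Your proposal is correct and is essentially the paper's argument: the paper derives the corollary in one sentence from the fact that fibration replacements are unique up to fiberwise homotopy equivalence, combined with Proposition \ref{prop:fibration}, and your zig-zag of mapping path-space replacements plus the Whitehead-style rectification is just the standard unpacking of that sentence. The only caveat is that the rectification step is essentially vacuous under the paper's implicit (Str{\o}m-style) reading of ``weak equivalence'' as homotopy equivalence, which is the reading needed for Proposition \ref{prop:fibration} to apply.
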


It is easy to see that the analog sectional category of $f$ is $0$ if and only if $f$ admits a continuous section. More generally, we have the following.

\begin{proposition}\label{prop:cover}
Let $f:X\to Y$ be a continuous map. If $Y$ has an open cover $\mathcal{U}=\{U_i\}_{i=1}^{n+1}$ such that 
\begin{enumerate}
\item $f|_{f^{-1}(U_i)}$ admits a continuous section for every $1\leq i\leq n+1$, and
\item $Y$ admits a partition of unity subordinate to $\mathcal{U}$,
\end{enumerate}
then $\mathrm{asecat}(f)\leq n$.
\end{proposition}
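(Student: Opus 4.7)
The plan is to write down an explicit continuous section of the projection $p\colon\mathcal{P}(f)\to Y$ whose image lies in $\mathcal{P}_{n+1}(f)$, assembled from the local sections and the partition of unity. Fix a continuous section $s_i\colon f^{-1}(U_i)\to X$ of $f|_{f^{-1}(U_i)}$ for each $i$, and let $\{\phi_i\}_{i=1}^{n+1}$ be a partition of unity subordinate to $\mathcal{U}$, so that each closed support $\mathrm{supp}(\phi_i)$ is contained in $U_i$. For $y\in Y$ set $I(y):=\{i:y\in U_i\}$ and define
\[
\sigma(y)\;=\;\sum_{i\in I(y)}\phi_i(y)\,\delta_{s_i(y)}.
\]
Since $\phi_i$ vanishes outside $U_i$ and $\sum_i\phi_i\equiv 1$, the coefficients are nonnegative and sum to $1$; each $s_i(y)$ lies in $f^{-1}(y)$; and $|\mathrm{supp}(\sigma(y))|\leq|I(y)|\leq n+1$. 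Thus $\sigma$ is a set-theoretic section of $p$ with image in $\mathcal{P}_{n+1}(f)$, and the only remaining task is to verify its continuity.

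The expected main obstacle is precisely this continuity check, since the indexing set $I(y)$ is not locally constant. I would handle it by a local trivialization argument. Fix $y_0\in Y$ and set $I_0:=I(y_0)$. Because each $\mathrm{supp}(\phi_i)$ is closed, contained in $U_i$, and misses $y_0$ whenever $i\notin I_0$, and because the cover is finite, the set
\[
W\;=\;\Bigl(\bigcap_{i\in I_0}U_i\Bigr)\cap\Bigl(\bigcap_{i\notin I_0}\bigl(Y\setminus\mathrm{supp}(\phi_i)\bigr)\Bigr)
\]
is an open neighborhood of $y_0$ on which $\phi_i\equiv 0$ for every $i\notin I_0$. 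By relation (3) in Definition \ref{def:probability measures}, we may therefore rewrite
\[
\sigma|_W(y)\;=\;\sum_{i\in I_0}\phi_i(y)\,\delta_{s_i(y)},
\]
with any vanishing terms being harmless. Writing $k=|I_0|$ and fixing an ordering of $I_0$, this restriction factors as
\[
W\;\xrightarrow{\,\bigl((s_i)_{i\in I_0},(\phi_i)_{i\in I_0}\bigr)\,}\;X^{k}\times\Delta^{k-1}\;\twoheadrightarrow\;\mathcal{P}(X),
\]
where the first map lands in $X^{k}\times\Delta^{k-1}$ because $\sum_{i\in I_0}\phi_i(y)=1$ on $W$, and the second is the defining quotient. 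Both are continuous, so $\sigma$ is continuous at $y_0$; as $y_0$ was arbitrary, $\sigma$ is continuous on $Y$, and the conclusion $\mathrm{asecat}(f)\leq n$ follows.
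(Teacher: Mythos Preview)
Your argument is correct (modulo the harmless slip ``$s_i\colon f^{-1}(U_i)\to X$'', which should read $s_i\colon U_i\to X$), but the route to continuity is genuinely different from the paper's. You localize: near each $y_0$ you produce an open $W$ on which the index set can be frozen at $I_0$ and the formula visibly factors through the quotient $X^{|I_0|}\times\Delta^{|I_0|-1}\to\mathcal{P}(X)$. The paper instead globalizes: it extends each partial datum $(s_i,\varphi_i)$ to a continuous map $\tilde s_i\colon Y\to C(X)$ into the cone by sending $Y\setminus\overline{\varphi_i^{-1}(\mathbb{R}_{>0})}$ to the cone point, assembles these into a single map $Y\to C(X)^{n+1}$, observes that the image lands in the iterated join $X^{*(n+1)}\subseteq C(X)^{n+1}$ (since the cone coordinates sum to $1$), and then passes through the quotient $X^{*(n+1)}\to\mathcal{P}_{n+1}(X)$. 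Your approach is more direct and self-contained; the paper's buys a structural payoff, namely the identification of $\mathcal{P}_{n+1}(f)$ as a fiberwise quotient of the $(n+1)$-fold fiberwise join, which is precisely the object in Schwarz's characterization of sectional category and underlies the remark following Corollary~\ref{cor:paracompact category}.
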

\begin{proof}
Let $\{\varphi_i\}_{i=1}^{n+1}$ be a partition of unity subordinate to $\mathcal{U}$ and $s_i:U_i\to X$ a continuous section of $f|_{f^{-1}(U_i)}$ for $1\leq i\leq n+1$. We will show that the function $s:Y\to \mathcal{P}(X)$ given by \[s(y)=\sum_{i=1}^{n+1}\varphi_i(y)\delta_{s_i(y)}\] is continuous. Since the $s_i$ are sections, the image of $s$ lies  in $\mathcal{P}(f)$, and it lies in $\mathcal{P}_{n+1}(X)$ by inspection, so the claim will follow. Note that the condition that $\sum_{i=1}^{n+1} \varphi_i\equiv 1$ is necessary in order that $s$ be well-defined.

Writing $C(X)$ for the topological cone on $X$, define $\tilde s_i:Y\to C(X)$ by the formula 
\[\tilde s_i(y)=\begin{cases}
(s_i(y),\varphi_i(y))&\qquad y\in U_i\\
*&\qquad y \in Y\setminus \overline{\varphi_i^{-1}(\mathbb{R}_{>0})}
\end{cases}
\] It is easy to check that this function is well-defined, and it is continuous on each of its (open) domains of definition, hence continuous. These maps determine the bottom map in the following diagram of continuous maps:
\[\xymatrix{
&&X^{n+1}\times\Delta^{n}\ar[d]\ar[dl]\\
&X^{*(n+1)}\ar[d]\ar@{-->}[r]&\mathcal{P}(X)\\
Y\ar@{-->}[ur]\ar[r]&C(X)^{n+1}.
}\] Here, we have written $X^{*(n+1)}$ for the iterated join of $X$, which carries the quotient topology from the solid diagonal map, and which embeds in $C(X)^{n+1}$ as the subspace where the cone coordinates sum to $1$ (we use our standing assumption that $X$ is convenient). The dashed fillers are well-defined as functions, hence continuous by general properties of the subspace and quotient topologies. Since the composite of the dashed arrows coincides with $s$ as set functions, the claim follows.
\end{proof}

\begin{corollary}\label{cor:paracompact category}
Let $f:X\to Y$ be a continuous map. If $Y$ is paracompact, then $\mathrm{asecat}(f)\leq \mathrm{secat}(f)$.
\end{corollary}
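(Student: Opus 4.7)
The plan is to reduce directly to Proposition \ref{prop:cover}. Recall that $\mathrm{secat}(f)$ is defined as the least $n$ such that $Y$ admits an open cover $\{U_i\}_{i=1}^{n+1}$ with the property that $f$ admits a continuous section over each $U_i$. So setting $n=\mathrm{secat}(f)$, condition (1) of Proposition \ref{prop:cover} is automatic by definition. Hence the only thing to verify is condition (2): that $Y$ admits a partition of unity subordinate to $\{U_i\}$.

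This is where paracompactness enters. The standard theorem is that a paracompact (Hausdorff) space admits a partition of unity subordinate to any open cover; so condition (2) is supplied by hypothesis, for free, from the cover already produced by the definition of $\mathrm{secat}(f)$. Applying Proposition \ref{prop:cover} to this cover and this partition of unity yields $\mathrm{asecat}(f)\leq n=\mathrm{secat}(f)$.

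There is essentially no obstacle here, since both the hypothesis and the needed conclusion are packaged in Proposition \ref{prop:cover}; the only thing to mention explicitly is the degenerate boundary case $\mathrm{secat}(f)=\infty$, where the inequality is vacuous. The proof is a one-liner.
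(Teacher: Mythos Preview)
Your proposal is correct and matches the paper's own reasoning: the corollary is stated without proof, as it follows immediately from Proposition \ref{prop:cover} once paracompactness supplies the partition of unity. The paper's subsequent remark even notes an alternative route via James' fiberwise join, but the direct application of Proposition \ref{prop:cover} is the intended argument.
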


\begin{remark}
Alternatively, Corollary \ref{cor:paracompact category} follows from \cite[Prop. 8.1]{James:CSLS}, together with the observation that $\mathcal{P}_{n}(f)$ is a quotient (over $Y$) of the $n$-fold fiberwise join of $f$. Of course, James' argument is no different from that of Proposition \ref{prop:cover}.
\end{remark}

\section{Analog category and analog topological complexity}

We are now able to define our invariants. In the following definition, we remind the reader of our standing convention that mapping spaces, topologized as in \cite[\S 5]{Steenrod:CCTS}, are convenient.

\begin{definition}\label{def:atc}
Let $X$ be a path connected topological space. 
\begin{enumerate}
\item The \emph{analog category} of $X$, denoted $\mathrm{acat}(X)$ or $\mathrm{ATC}_1(X)$, is the analog sectional category of the map \[\pi_X=\pi_X^1:(X,x_0)^{([0,1],\{0\})}\to X\] given by evaluation at $1$, where $x_0\in X$ is any basepoint.
\item For $r>1$ the $r$th \emph{sequential analog topological complexity} of $X$, denoted $\mathrm{ATC}_r(X),$ is the analog sectional category of the map \[\pi^r_X:X^{[0,1]}\to X^r\] given by evaluation at $\frac{i}{r-1}$ for $0\leq i<r$. In the case $r=2$, we write simply $\mathrm{ATC}(X)$ and refer to \emph{analog topological complexity}.
\end{enumerate}
\end{definition}

Note that, by fiberwise homotopy invariance, the analog category is independent of the choice of basepoint. 

\begin{remark}\label{remark:diagonal}
Equivalently, by Corollary \ref{cor:derived}, $\mathrm{acat}(X)$ is the analog sectional category of any replacement of the inclusion of the basepoint by a fibration, and $\mathrm{ATC}_r(X)$ is the analog sectional category of any replacement of the diagonal $X\to X^r$.
\end{remark}

We record a few basic properties of these invariants.

\begin{proposition}\label{prop:basic}
Let $X$ be a topological space.
\begin{enumerate}
\item If $X\simeq Y$, then $\mathrm{ATC}_r(X)=\mathrm{ATC}_r(Y)$ for every $r>0$.
\item The equality $\mathrm{ATC}_r(X)=0$ holds for some $r>0$ if and only if it holds for every $r>0$ if and only if $X$ is contractible.
\item If $X$ is paracompact, then $\mathrm{ATC}_r(X)\leq \mathrm{TC}_r(X)$ for every $r>0$.
\item The inequality $\mathrm{ATC}_r(X)\leq \mathrm{acat}(X^r)$ holds.
\item The quantity $\mathrm{ATC}_r(X)$ is non-decreasing in $r$.
\end{enumerate}
\end{proposition}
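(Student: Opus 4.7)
The plan is to deduce each assertion from the functoriality and compatibility results developed previously, supplemented by two explicit constructions for the last two items.

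For (1), a homotopy equivalence $h:X\to Y$ induces a commutative square of continuous maps whose vertical sides are $\pi_X^r$ and $\pi_Y^r$ (both Hurewicz fibrations as path-space evaluations) and whose horizontal sides are the homotopy equivalences $h^I$ and $h^r$; Proposition \ref{prop:fibration} then yields the required equality, with the $r=1$ case handled analogously via Remark \ref{remark:diagonal} and fibration replacements of the basepoint inclusions. For (2), the identification $\mathcal{P}_1(\pi_X^r)=\mathcal{P}_1(X^I)\cong X^I$ shows that $\mathrm{ATC}_r(X)=0$ is equivalent to the existence of a continuous section of $\pi_X^r$, which for $r\geq 2$ is a classical motion planner and for $r=1$ is a pointed contraction; either exists if and only if $X$ is contractible, a condition not depending on $r$, and the chain of equivalences follows. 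For (3), we apply Corollary \ref{cor:paracompact category} directly, noting that $\mathrm{TC}_r(X)=\mathrm{secat}(\pi_X^r)$ by definition.

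For (4) with $r\geq 2$ (the case $r=1$ being tautological), I would construct a continuous map $\phi$ from the domain of $\pi_{X^r}$ to $X^I$ satisfying $\pi_X^r\circ\phi=\pi_{X^r}$. Writing a based path $\gamma:I\to X^r$ componentwise as $(\gamma_1,\ldots,\gamma_r)$ with each $\gamma_j(0)=*$, define $\phi(\gamma)$ to be the piecewise path which, on each subinterval $[j/(r-1),(j+1)/(r-1)]$ for $0\leq j\leq r-2$, first retraces $\gamma_{j+1}$ backward from $\gamma_{j+1}(1)$ to $*$ on the first half and then proceeds along $\gamma_{j+2}$ from $*$ to $\gamma_{j+2}(1)$ on the second half. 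Continuity holds because consecutive segments agree at $*$, and by construction $\phi(\gamma)(j/(r-1))=\gamma_{j+1}(1)$ for $0\leq j\leq r-1$, giving the asserted compatibility. The induced map $\phi_*:\mathcal{P}(\pi_{X^r})\to\mathcal{P}(\pi_X^r)$ thus sits over $X^r$ and preserves supports, and postcomposing a section of the former landing in $\mathcal{P}_{n+1}$ delivers a section of the latter with the same bound.

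For (5) with $r\geq 2$, I would use the reparameterization $\rho:X^I\to X^I$ defined by $\rho(\gamma)(t)=\gamma((1-1/r)t+1/r)$, which satisfies $\pi_X^r\circ\rho(\gamma)=(\gamma(1/r),\gamma(2/r),\ldots,\gamma(1))$. Combining with the insertion $\psi:X^r\to X^{r+1}$, $\psi(y_1,\ldots,y_r)=(y_1,y_1,y_2,\ldots,y_r)$, the composite $\rho_*\circ s\circ\psi$ turns any section $s$ of $\mathcal{P}(\pi_X^{r+1})\to X^{r+1}$ landing in $\mathcal{P}_{n+1}$ into a section of $\mathcal{P}(\pi_X^r)\to X^r$ with the same support bound. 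The case $r=1$ follows instead from Corollary \ref{cor:base change} applied to the injection $\iota:X\hookrightarrow X^2$, $\iota(x)=(x_0,x)$, under which the pullback of $\pi_X^2$ is canonically identified with $\pi_X^1$. The main technical obstacle among these items is (4), where the piecewise construction of $\phi$ requires care to verify continuity and the compatibility with the projections; the remaining parts are direct invocations of earlier results or routine reparameterization arguments.
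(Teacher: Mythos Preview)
Your proof is correct. For parts (1)--(3) you follow essentially the same route as the paper. For parts (4) and (5), however, you take a more explicit, hands-on approach: you construct concrete maps $\phi$ and $\rho$ between path spaces lying over $X^r$ and invoke the functoriality of $\mathcal{P}$ directly. The paper instead works at the level of derived analog sectional category via Remark \ref{remark:diagonal} and Corollary \ref{cor:derived}, observing that $\mathrm{ATC}_r(X)$ and $\mathrm{acat}(X^r)$ are the derived analog sectional categories of the diagonal $X\to X^r$ and the basepoint inclusion $\Delta^0\to X^r$, respectively, and then reads off (4) and (5) simultaneously from the single commuting triangle $\Delta^0\to X\to X^r\to X^{r+1}$. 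The paper's approach is more conceptual and handles all $r$ uniformly (whereas you needed a separate argument via Corollary \ref{cor:base change} for the step $r=1\to r=2$ in (5)); your approach has the virtue of being self-contained at the path-space level and is essentially the argument recorded in the referee's remark following the paper's proof. For (2), the paper's logic also differs slightly: rather than arguing directly that a section of $\pi_X^r$ forces contractibility, it uses the already-established (4) and (5) to reduce the general case to the single statement that $\mathrm{acat}(X)=0$ implies $X$ is contractible.
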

\begin{proof}
The first and third claims are immediate from Propositions \ref{prop:fibration} and \ref{prop:cover}, respectively. In light of Remark \ref{remark:diagonal}, the fourth and fifth claims follow from Corollary \ref{cor:derived} and the commuting diagram 
\[
\xymatrix{
\Delta^0\ar[dr]_-{(x_0,\ldots, x_0)}\ar[r]^-{x_0}&X\ar[d]\ar[dr]\\
&X^r\ar[r]&X^{r+1},
}
\] where the unmarked arrows are the appropriate diagonal maps. For the second claim, we note that the equality $\mathrm{ATC}_r(X)=0$ is equivalent to the existence of a section of $\pi^r_X$. In the case of a singleton, this map is a homeomorphism for every $r>0$, hence certainly admits a section; therefore, by homotopy invariance and the fourth claim, we have $\mathrm{ATC}_r(X)=0$ for every $r>0$ when $X$ (hence $X^r$) is contractible. On the other hand, if $\mathrm{ATC}_r(X)=0$ for some $r>0$, then $\mathrm{acat}(X)=0$ by the fifth claim. Thus, the map $\pi_X$ admits a section, and we conclude that $X$ is a retract of a contractible space, hence contractible.
\end{proof}

This result yields an infinite family of examples for which category and analog category coincide.

\begin{corollary}\label{cor:spheres}
For any $0<d<\infty$, we have $\mathrm{acat}(S^d)=1$.
\end{corollary}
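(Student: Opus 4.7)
The plan is to establish the two bounds $\mathrm{acat}(S^d)\geq 1$ and $\mathrm{acat}(S^d)\leq 1$ separately. The lower bound is free: since $S^d$ is not contractible for $d>0$, Proposition \ref{prop:basic}(2) rules out the possibility $\mathrm{acat}(S^d)=\mathrm{ATC}_1(S^d)=0$, so $\mathrm{acat}(S^d)\geq 1$.

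For the upper bound I would apply Proposition \ref{prop:cover} to the evaluation fibration $\pi_{S^d}$ using the standard two-element open cover of the sphere by punctured complements. Concretely, fix a basepoint $x_0\in S^d$, pick distinct points $p,q\in S^d\setminus\{x_0\}$, and set $U_+=S^d\setminus\{p\}$, $U_-=S^d\setminus\{q\}$, so that $\{U_+,U_-\}$ covers $S^d$. Each $U_\pm$ is homeomorphic to $\mathbb{R}^d$ via stereographic projection and hence admits a straight-line homotopy $h_\pm:U_\pm\times[0,1]\to U_\pm\subseteq S^d$ with $h_\pm(x,0)=x_0$ and $h_\pm(x,1)=x$. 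The adjoint of $h_\pm$ is a continuous map $s_\pm:U_\pm\to (S^d,x_0)^{([0,1],\{0\})}$ which, by construction, is a section of $\pi_{S^d}$ over $U_\pm$. Since $S^d$ is a compact metric space, hence paracompact, a partition of unity subordinate to $\{U_+,U_-\}$ exists, and Proposition \ref{prop:cover} delivers $\mathrm{acat}(S^d)\leq 1$.

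There is essentially no mathematical obstacle here; the whole argument is a routine assembly of results already established in the paper, together with the classical observation that a sphere admits a two-set cover by contractible opens. One could equivalently short-circuit the upper bound by appealing to Proposition \ref{prop:basic}(3) with the well-known equality $\mathrm{cat}(S^d)=1$, but the direct route via Proposition \ref{prop:cover} has the advantage of exhibiting an explicit analog motion planner supported on at most two points.
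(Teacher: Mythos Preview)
Your proof is correct and essentially matches the paper's: the lower bound is identical, and for the upper bound the paper takes exactly the shortcut you mention at the end, invoking Proposition \ref{prop:basic}(3) together with $\mathrm{cat}(S^d)=1$. Your direct appeal to Proposition \ref{prop:cover} with the two punctured-sphere cover simply unwinds that inequality one step, so the two arguments coincide in substance.
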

\begin{proof}
The lower bound follows from Proposition \ref{prop:basic}(2). The upper bound follows from Proposition \ref{prop:basic}(3) and the classical calculation $\mathrm{cat}(S^d)=1$.
\end{proof}

\begin{remark}
We record a remark made by a referee. As noted above, a pair of maps $f_i:X_i\to Y$ and a map $g:X_1\to X_2$ with $f_2\circ g=f_1$ induce a commutative diagram 
\[\xymatrix{
\mathcal{P}(f_1)\ar[dr]_-{p_{f_1}}\ar[rr]^-{g_*}&&\mathcal{P}(f_2)\ar[dl]^-{p_{f_2}}\\
&Y
}
\] In particular, it follows that $\mathrm{asecat}(f_1)\geq \mathrm{asecat}(f_2)$. Taking $g$ to be the map $(X^r)^{[0,1]}\to X^{[0,1]}$ given by concatenation, this observation supplies an alternative proof of Proposition \ref{prop:basic}(4).
\end{remark}

We close this section by recording an important submultiplicative law for finite covers. As we will see, this inequality produces strong divergence between our analog invariants and their classical counterparts in many cases.

\begin{proposition}\label{prop:finite cover}
Fix $r>0$. If $p:E\to X$ is a degree $k$ covering map, then \[\mathrm{ATC}_r(X)+1\leq k(\mathrm{ATC}_r(E)+1).\]
\end{proposition}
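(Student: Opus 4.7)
The plan is to reduce the inequality to a general covering lemma for analog sectional category, applied to a specific degree-$k$ cover of $X^r$, combined with the unique path-lifting property of the covering.

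First, I would establish a covering lemma: if $f: Z \to Y$ is any continuous map and $q: \tilde{Y} \to Y$ is a degree-$k$ covering, with $\tilde{f}: Z \times_Y \tilde{Y} \to \tilde{Y}$ the pullback and $\tilde{q}: Z \times_Y \tilde{Y} \to Z$ the other projection, then $\mathrm{asecat}(f) + 1 \leq k(\mathrm{asecat}(\tilde{f}) + 1)$. Given a section $\tilde{s}: \tilde{Y} \to \mathcal{P}_m(\tilde{f})$ with $m = \mathrm{asecat}(\tilde{f}) + 1$, the plan is to form the composite
\[Y \xrightarrow{\eta_Y} \mathcal{P}(Y) \xrightarrow{q^*} \mathcal{P}(\tilde{Y}) \xrightarrow{(\tilde{q}_* \circ \tilde{s})_*} \mathcal{P}(\mathcal{P}(Z)) \xrightarrow{\kappa_Z} \mathcal{P}(Z).\]
By Proposition \ref{prop:contravariant}, $q^*$ sends a Dirac to a measure of support at most $k$; applying $\mathcal{P}$ to the composite $\tilde{q}_* \circ \tilde{s}: \tilde{Y} \to \mathcal{P}_m(f)$ preserves this support count; and the monad multiplication $\kappa_Z$ from Proposition \ref{prop:monad} then collapses a measure of support $k$ of measures of support $m$ to one of support at most $km$. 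Tracing the definitions shows that the resulting measure lies in $\mathcal{P}_{km}(f)$ over the chosen point of $Y$, yielding the desired section.

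Next, I would apply this lemma to $f = \pi^r_X$ with the degree-$k$ covering
\[q: E \times X^{r-1} \to X^r, \qquad (e, x_1, \ldots, x_{r-1}) \mapsto (p(e), x_1, \ldots, x_{r-1}).\]
A point of the pullback $X^{[0,1]} \times_{X^r} (E \times X^{r-1})$ is a tuple $(\gamma, e, x_1, \ldots, x_{r-1})$ with $\gamma(0) = p(e)$ and $\gamma(i/(r-1)) = x_i$; since the way-points are redundant, the data amounts to a pair $(\gamma, e)$ with $\gamma(0) = p(e)$, and by unique path lifting such a pair is determined by a single lifted path $\tilde{\gamma} \in E^{[0,1]}$. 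Continuity in both directions is standard for coverings, so the pullback is canonically homeomorphic to $E^{[0,1]}$, under which the pullback fibration becomes the map $\tilde{\pi}: E^{[0,1]} \to E \times X^{r-1}$ given by $\tilde{\gamma} \mapsto (\tilde{\gamma}(0), p\tilde{\gamma}(1/(r-1)), \ldots, p\tilde{\gamma}(1))$.

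The main obstacle is the remaining inequality $\mathrm{asecat}(\tilde{\pi}) \leq \mathrm{ATC}_r(E)$. The plan is to convert a section $\sigma: E^r \to \mathcal{P}_m(\pi^r_E)$ realising $\mathrm{ATC}_r(E) = m - 1$ into a section $\tilde{s}: E \times X^{r-1} \to \mathcal{P}_m(\tilde{\pi})$ of the same support, despite the fact that $\tilde{\pi}$ factors through $\pi^r_E$ via the map $g: E^r \to E \times X^{r-1}$, $(\tilde{x}_0, \tilde{x}_1, \ldots) \mapsto (\tilde{x}_0, p\tilde{x}_1, \ldots)$. The key observation is that for a fixed lift $e$ of the initial way-point, path lifting canonically identifies each $X$-path with the prescribed way-points starting at $x_0 = p(e)$ with a unique $E$-path starting at $e$, so the $E$-planner can effectively be applied ``along the path itself'' rather than to a pre-selected tuple of way-point lifts; making this precise requires a careful use of the monad structure and functoriality of $\mathcal{P}$ to maintain continuity and the support bound. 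Combining the covering lemma with this reduction yields
\[\mathrm{ATC}_r(X) + 1 = \mathrm{asecat}(\pi^r_X) + 1 \leq k(\mathrm{asecat}(\tilde{\pi}) + 1) \leq k(\mathrm{ATC}_r(E) + 1),\]
as claimed.
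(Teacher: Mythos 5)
Your covering lemma is correct, and it is essentially the paper's own argument: the section of $\mathcal{P}(f)\to Y$ is assembled from $q^*$ (Proposition \ref{prop:contravariant}), functoriality, and the monad multiplication $\kappa$ (Proposition \ref{prop:monad}), exactly as in the paper's proof of the case $r=1$. For $r=1$ your remaining step also holds, because the pullback of $\pi_X$ along $p$ contains the based path fibration $\pi_E$ as a union of components over $E$, so a planner for $E$ based at a chosen $e_0\in p^{-1}(x_0)$ transports directly. The proof breaks at the step you yourself flag as the main obstacle: for $r>1$ the inequality $\mathrm{asecat}(\tilde\pi)\leq\mathrm{ATC}_r(E)$ is not merely unproven but false.

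Take $r=2$, $E=S^\infty$, $X=\mathbb{RP}^\infty$, $k=2$. Since $E$ is contractible, $\mathrm{ATC}_2(E)=0$ by Proposition \ref{prop:basic}, so your claim would produce an honest section $\tilde s$ of $\tilde\pi\colon E^{[0,1]}\to E\times X$, $\tilde\gamma\mapsto(\tilde\gamma(0),p\tilde\gamma(1))$. Evaluating $\tilde s$ at time $1$ would give a continuous lift of the projection $\mathrm{pr}_2\colon E\times X\to X$ through $p$, which is impossible because $(\mathrm{pr}_2)_*$ is surjective on fundamental groups while $p_*\pi_1(E)$ is trivial. In fact $\mathrm{asecat}(\tilde\pi)=1$ here, so your chain of inequalities only yields $\mathrm{ATC}(\mathbb{RP}^\infty)\leq 3$ rather than the bound $\leq 1$ that the proposition (together with contractibility of $S^\infty$) asserts. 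The ``key observation'' is circular: the planner is what produces the path, so there is no path along which to transport a choice of lifts of the intermediate waypoints; choosing lifts $\tilde y_1,\ldots,\tilde y_{r-1}$ continuously in $(\tilde x_0,y_1,\ldots,y_{r-1})$ is precisely a section of a generally nontrivial covering of $E\times X^{r-1}$, and averaging over all such lifts instead costs a factor of $k^{r-1}$. Any correct argument for $r>1$ must feed the planner for $E$ a full tuple of lifted waypoints before pushing down to $X$ and then explain why the resulting family of choices contributes only a factor of $k$ to the support; this is the content compressed into the paper's phrase ``entirely analogous,'' and the factorization through $E\times X^{r-1}$ cannot supply it.
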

\begin{proof}
We prove the case $r=1$, the case $r>1$ being entirely analogous. Choose a basepoint $e_0\in p^{-1}(x_0)$, and suppose that $\mathrm{acat}(E)=n$, so that there is a section $s:E\to \mathcal{P}(\pi_E)$ lying in $\mathcal{P}_{n+1}(\pi_E)$. Consider the composite map
\[X\xrightarrow{p^*} \mathcal{P}_k(E)\xrightarrow{s_*} \mathcal{P}_k(\mathcal{P}_{n+1}(\pi_E))\to \mathcal{P}_k(\mathcal{P}_{n+1}(\pi_X))\xrightarrow{\mu}\mathcal{P}_{k(n+1)}(\pi_X)\subseteq \mathcal{P}(\pi_X),\] where the unmarked arrow is induced by the morphism in the arrow category given by the following commutative diagram: 
\[\xymatrix{
(E,e_0)^{([0,1],\{0\})}\ar[d]_-{\pi_E}\ar[r]^-{p\circ(-)}&(X,x_0)^{([0,1],\{0\})}\ar[d]^-{\pi_X}\\
E\ar[r]^-p&X
}\]
A direct calculation reveals that this map is a section of the projection, implying the claim.
\end{proof}

\begin{corollary}\label{cor:projective}
For any $r>0$, we have $\mathrm{ATC}_r(\mathbb{RP}^d)<\mathrm{TC}_r(\mathbb{RP}^d)$ for all but finitely many $d$.
\end{corollary}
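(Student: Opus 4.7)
The plan is to combine the covering inequality of Proposition~\ref{prop:finite cover} with a uniform bound on $\mathrm{TC}_r(S^d)$ to force $\mathrm{ATC}_r(\mathbb{RP}^d)$ to remain bounded as $d$ varies, and then contrast this with the classical unboundedness of $\mathrm{TC}_r(\mathbb{RP}^d)$.

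First, I would apply Proposition~\ref{prop:finite cover} to the standard two-sheeted covering $S^d \to \mathbb{RP}^d$ to obtain
\[
\mathrm{ATC}_r(\mathbb{RP}^d) + 1 \;\leq\; 2\bigl(\mathrm{ATC}_r(S^d) + 1\bigr).
\]
Since $S^d$ is paracompact, Proposition~\ref{prop:basic}(3) gives $\mathrm{ATC}_r(S^d) \leq \mathrm{TC}_r(S^d)$, and the classical computation of the sequential topological complexity of spheres (as in \cite{Rudyak:HATC}) furnishes the uniform bound $\mathrm{TC}_r(S^d) \leq r$, independent of $d$. Combining these yields the $d$-independent estimate $\mathrm{ATC}_r(\mathbb{RP}^d) \leq 2r + 1$ for every $d \geq 1$.

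For the right-hand side I would invoke the classical lower bound $\mathrm{TC}_r(\mathbb{RP}^d) \geq \mathrm{cat}(\mathbb{RP}^d) = d$, which follows by monotonicity of $\mathrm{TC}_r$ in $r$ together with $\mathrm{TC}_1 = \mathrm{cat}$ and the standard mod-$2$ cup-length computation $\mathrm{cat}(\mathbb{RP}^d) = d$. Hence $\mathrm{TC}_r(\mathbb{RP}^d) > 2r + 1 \geq \mathrm{ATC}_r(\mathbb{RP}^d)$ whenever $d > 2r+1$, which excludes only finitely many values of $d$ for each fixed $r$.

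The argument is essentially routine given Proposition~\ref{prop:finite cover}; the only real input beyond it is the classical uniform upper bound on $\mathrm{TC}_r(S^d)$, which if desired can also be obtained hands-on by exhibiting an explicit open cover of $(S^d)^r$ whose pieces each admit a section of the $r$-sequential path fibration via great-circle geodesics. There is no genuine obstacle.
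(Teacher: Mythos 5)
Your argument is correct and is essentially the paper's own proof: both use Proposition~\ref{prop:finite cover} applied to the double cover $S^d\to\mathbb{RP}^d$, the comparison $\mathrm{ATC}_r(S^d)\leq\mathrm{TC}_r(S^d)\leq r$ to get the $d$-independent bound $2r+1$, and the classical lower bound $\mathrm{TC}_r(\mathbb{RP}^d)\geq\mathrm{cat}(\mathbb{RP}^d)=d$. No issues.
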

\begin{proof}
Combining Corollary \ref{cor:spheres} and Propositions \ref{prop:basic} and \ref{prop:finite cover}, we have
\begin{align*}
\mathrm{ATC}_r(\mathbb{RP}^d)&\leq 2(\mathrm{ATC}_r(S^d)+1)-1\\
&\leq 2(\mathrm{TC}_r(S^d)+1)-1\\
&\leq 2r+1,
\end{align*} which is constant in $d$. On the other hand, we have $\mathrm{TC}_r(\mathbb{RP}^d)\geq \mathrm{cat}(\mathbb{RP}^d)=d$, implying the claim.
\end{proof}

\begin{remark}
At the cost of some arithmetic, Corollary \ref{cor:projective} could be improved by appealing to the lower bounds of \cite{Davis:BHTCRPSIBBP}.
\end{remark}

In the case $r=2$, we have the following.

\begin{proposition}\label{prop:projective}
For any $d>0$, we have $\mathrm{ATC}(\mathbb{RP}^d)=1$.
\end{proposition}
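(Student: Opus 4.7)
The lower bound $\mathrm{ATC}(\mathbb{RP}^d)\geq 1$ is immediate from Proposition \ref{prop:basic}(2) and the non-contractibility of $\mathbb{RP}^d$ for $d>0$. For the upper bound, the plan is to write down an explicit section $s\colon(\mathbb{RP}^d)^2\to \mathcal{P}(\pi^2_{\mathbb{RP}^d})$ with image in $\mathcal{P}_2$, realizing the geodesic-mixing picture sketched in the introduction. Note that the submultiplicative bound of Proposition \ref{prop:finite cover} is not strong enough here, since $\mathrm{ATC}(S^d)\geq 1$ by non-contractibility, so a direct construction is required.

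Equip $\mathbb{RP}^d$ with the round metric. For $(x,y)\in(\mathbb{RP}^d)^2$, choose arbitrary lifts $\tilde x,\tilde y\in S^d$, and let $\gamma_1$ and $\gamma_2$ denote the projections to $\mathbb{RP}^d$ of the minimizing $S^d$-geodesics from $\tilde x$ to $\tilde y$ and from $\tilde x$ to $-\tilde y$ respectively, of lengths $\ell_1$ and $\ell_2$ satisfying $\ell_1+\ell_2=\pi$. Set
\[
s(x,y)=\frac{\ell_2}{\pi}\delta_{\gamma_1}+\frac{\ell_1}{\pi}\delta_{\gamma_2}.
\]
A direct check shows the four choices of lift permute this data as an unordered weighted pair, so $s$ is well-defined, and it lands in $\mathcal{P}_2(\pi^2_{\mathbb{RP}^d})$ by inspection. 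On the diagonal we may take $\tilde y=\tilde x$, whence $\ell_1=0$, $\gamma_1$ is the constant path $c_x$, and the term involving the (ill-defined) $\gamma_2$ is killed by its zero coefficient, giving $s(x,x)=\delta_{c_x}$.

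Continuity is to be verified locally. Away from the diagonal $\Delta$, a local section of the cover $S^d\times S^d\to(\mathbb{RP}^d)^2$ makes both $\gamma_i$ and $\ell_i$ continuous (neither endpoint becomes antipodal to the other under small perturbation), so $s$ factors as a continuous map into $(\mathbb{RP}^d)^{[0,1]}\times(\mathbb{RP}^d)^{[0,1]}\times \Delta^1$ composed with the defining quotient. On $\Delta$, however, $\gamma_2$ admits no canonical continuous extension, and one must rely on relation (3) of Definition \ref{def:probability measures} to discard the vanishing-weight summand. To rigorize this, I would note that all paths appearing in the image of $s$ are geodesic segments of length at most $\pi$, hence lie in a compact subspace $K\subseteq(\mathbb{RP}^d)^{[0,1]}$ by Arzel\`{a}--Ascoli. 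By Theorem \ref{thm:point set}, the closed embedding $K\hookrightarrow(\mathbb{RP}^d)^{[0,1]}$ induces an embedding $\mathcal{P}_2(K)\hookrightarrow\mathcal{P}_2((\mathbb{RP}^d)^{[0,1]})$, reducing continuity of $s$ to continuity as a map into the compact Hausdorff space $\mathcal{P}_2(K)$. Using first countability of $(\mathbb{RP}^d)^2$, this can be checked sequentially: given $(x_n,y_n)\to(x_0,x_0)$, pass to a subsequence along which $\gamma_2(x_n,y_n)$ converges to some $\bar\gamma\in K$; then the lifted tuples converge in $K^2\times\Delta^1$ to $(c_{x_0},\bar\gamma,1,0)$, whose image in $\mathcal{P}_2(K)$ is $\delta_{c_{x_0}}$ regardless of $\bar\gamma$, so every subsequential limit equals $\delta_{c_{x_0}}=s(x_0,x_0)$.

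The hard part is precisely this continuity at the diagonal: the geometry of the formula is compelling, but reconciling the discontinuous ``long'' geodesic with the quotient topology on $\mathcal{P}$ requires the compactness reduction above rather than a direct manipulation of the quotient presentation.
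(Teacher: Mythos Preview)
Your proof is correct and follows essentially the same approach as the paper: both construct the section as a probability measure supported on the two shortest geodesics, with weights interpolating between $(1,0)$ on the diagonal and $(\tfrac12,\tfrac12)$ on the orthogonal locus. The only differences are cosmetic---the paper uses the weight pair $\tfrac12(1\pm\theta)$ with $\theta=|\langle\tilde x,\tilde y\rangle|$, while you use the linear pair $(\ell_2/\pi,\ell_1/\pi)$---and your treatment of continuity at the diagonal (where the long geodesic degenerates) via the compactness reduction and subsequential argument is considerably more careful than the paper's one-line assertion.
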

\begin{proof}
Define $\theta:\mathbb{RP}^d\times\mathbb{RP}^d\to \mathbb{R}_{\geq0}$ by setting $\theta(\ell_1,\ell_2)$ to be the absolute value of the dot product between unit vectors in $\ell_1$ and $\ell_2$. The function $\theta$ is well-defined and continuous. Equip $\mathbb{RP}^d$ with the metric induced by the standard metric on $S^d$, and consider the two shortest geodesics from $\ell_1$ to $\ell_2$. Contemplation of geodesics on the sphere shows that these two geodesics have equal length precisely when $\theta(\ell_1,\ell_2)=0$. Away from this locus, we write $\gamma_1(\ell_1,\ell_2)$ and $\gamma_2(\ell_1,\ell_2)$ for the shortest and second shortest geodesics, respectively; on this locus, we assign the labels $1$ and $2$ arbitrarily. Then the function $\frac{1}{2}((1+\theta)\gamma_1+(1-\theta)\gamma_2)$ is a continuous section lying in $\mathcal{P}_2(\pi^2_{\mathbb{RP}^d})$.
\end{proof}

From this calculation and the monotinicity in $r$ of Proposition \ref{prop:basic}(5), we conclude the following.

\begin{corollary}
For any $d>0$, we have $\mathrm{acat}(\mathbb{RP}^d)=1$.
\end{corollary}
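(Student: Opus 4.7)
The plan is to obtain the corollary as a direct consequence of Proposition \ref{prop:projective} together with the basic monotonicity and non-triviality properties recorded in Proposition \ref{prop:basic}.

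For the upper bound, I would invoke the monotonicity statement Proposition \ref{prop:basic}(5), which asserts that $\mathrm{ATC}_r(X)$ is non-decreasing in $r$. Since $\mathrm{acat}(X) = \mathrm{ATC}_1(X)$ by Definition \ref{def:atc}, and $\mathrm{ATC}(X) = \mathrm{ATC}_2(X)$, this gives $\mathrm{acat}(\mathbb{RP}^d) \leq \mathrm{ATC}(\mathbb{RP}^d) = 1$, where the final equality is Proposition \ref{prop:projective}.

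For the lower bound, I would apply Proposition \ref{prop:basic}(2), which says that $\mathrm{ATC}_r(X) = 0$ for some (equivalently every) $r > 0$ if and only if $X$ is contractible. Since $\mathbb{RP}^d$ is not contractible for any $d > 0$ (its fundamental group is $\mathbb{Z}/2$, or for $d \geq 2$ one can cite that $H_d(\mathbb{RP}^d;\mathbb{Z}/2) \neq 0$), we obtain $\mathrm{acat}(\mathbb{RP}^d) \geq 1$.

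Combining the two bounds yields the claimed equality. There is no real obstacle here; both bounds are immediate from results already proved, and the sentence preceding the corollary in the excerpt already signals that the argument is just monotonicity applied to Proposition \ref{prop:projective} together with non-contractibility. The proof is therefore a one-line deduction.
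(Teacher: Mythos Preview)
Your proposal is correct and matches the paper's approach exactly: the text immediately preceding the corollary already indicates that it follows from Proposition \ref{prop:projective} together with the monotonicity in Proposition \ref{prop:basic}(5), and you have correctly supplied the (implicit) lower bound via Proposition \ref{prop:basic}(2) and the non-contractibility of $\mathbb{RP}^d$.
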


\section{Calculations for aspherical spaces}

We turn now to the study of spaces whose fundamental groups dictate their entire homotopy types. In this setting, the numerical invariants $\mathrm{ATC}_r$ become group invariants. More specifically, given a (discrete) group $G$, we write $BG$ for the geometric realization of the nerve of $G$. This space is a CW complex, hence convenient. 

\begin{remark}\label{remark:locally finite}
If $G$ is infinite, the CW complex $BG$ is not locally finite, hence not metrizable. If $G$ is not finitely generated, then the same applies to any homotopy equivalent CW complex. Thus, the definition of \cite{DranishnikovJauhari:DTCLSC} does not apply directly in this setting.
\end{remark}

In the finite case, we have the following general upper bound.

\begin{theorem}\label{thm:finite group}
For any finite group $G$, we have $\mathrm{ATC}_r(BG)\leq |G|-1$ for $r>0$.
\end{theorem}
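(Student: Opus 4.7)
The plan is to apply the submultiplicative law for finite covers (Proposition \ref{prop:finite cover}) to the universal covering map $p \colon EG \to BG$, which has degree exactly $|G|$ since $G$ is finite. The total space $EG$ is contractible by construction of the classifying space, so by Proposition \ref{prop:basic}(2), $\mathrm{ATC}_r(EG) = 0$ for every $r > 0$.

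Substituting into the inequality of Proposition \ref{prop:finite cover}, we obtain
\[
\mathrm{ATC}_r(BG) + 1 \;\leq\; |G|\bigl(\mathrm{ATC}_r(EG) + 1\bigr) \;=\; |G|,
\]
which rearranges to the desired bound $\mathrm{ATC}_r(BG) \leq |G| - 1$.

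The main point to check is merely that Proposition \ref{prop:finite cover} applies in this setting: we need $EG$ to be convenient (which holds because the universal cover of a CW complex is again a CW complex, and CW complexes are convenient), and we need $p \colon EG \to BG$ to be a genuine covering map of degree $|G|$ (which is standard for classifying spaces of discrete groups). Both are routine, so there is essentially no obstacle; the theorem is a direct corollary of Propositions \ref{prop:basic} and \ref{prop:finite cover} once one observes that finite groups have contractible universal covers with deck group of order $|G|$.
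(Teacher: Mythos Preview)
Your proof is correct and follows exactly the same approach as the paper: apply Proposition~\ref{prop:finite cover} to the degree $|G|$ covering $EG\to BG$ and use contractibility of $EG$ via Proposition~\ref{prop:basic}(2). The paper's argument is essentially verbatim your computation.
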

\begin{proof}
Applying Proposition \ref{prop:finite cover} to the quotient map $EG\to BG$, a degree $|G|$ cover, and invoking Proposition \ref{prop:basic}, we have 
\begin{align*}
\mathrm{ATC}_r(BG)\leq |G|(\mathrm{ATC}_r(EG)+1)-1=|G|-1.
\end{align*} since $EG$ is contractible.
\end{proof}

Since $\mathbb{RP}^\infty\cong BC_2$ is not contractible, Proposition \ref{prop:basic} implies the following calculation.

\begin{corollary}
We have $\mathrm{ATC}_r(\mathbb{RP}^\infty)=1$ for every $r>0$.
\end{corollary}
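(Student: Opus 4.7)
The plan is to sandwich $\mathrm{ATC}_r(\mathbb{RP}^\infty)$ between $1$ and $1$ using results already established. Since $\mathbb{RP}^\infty$ is a model for $BC_2$, the finite group $C_2$ of order $2$ governs the upper bound, while non-contractibility gives the lower bound.

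For the upper bound, I would invoke Theorem \ref{thm:finite group} applied to $G = C_2$: since $|C_2| = 2$, we obtain $\mathrm{ATC}_r(BC_2) \leq |C_2| - 1 = 1$ for every $r>0$. By the homotopy invariance in Proposition \ref{prop:basic}(1) and the standard identification $\mathbb{RP}^\infty \simeq BC_2$, the same bound applies to $\mathbb{RP}^\infty$.

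For the lower bound, I would use Proposition \ref{prop:basic}(2): the equality $\mathrm{ATC}_r(X) = 0$ holds if and only if $X$ is contractible. Since $\mathbb{RP}^\infty$ has nontrivial homotopy (for instance $\pi_1 \cong C_2$), it is not contractible, so $\mathrm{ATC}_r(\mathbb{RP}^\infty) \geq 1$ for every $r>0$. Combining the two bounds yields the equality.

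There is no real obstacle here; both halves follow immediately by citing prior results. The only small subtlety to flag is the implicit use of homotopy invariance to pass between $\mathbb{RP}^\infty$ (which is the colimit of the finite-dimensional projective spaces) and the nerve model $BC_2$ used in Theorem \ref{thm:finite group}, but Proposition \ref{prop:basic}(1) handles this cleanly.
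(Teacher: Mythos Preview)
Your proof is correct and follows the same approach as the paper: the upper bound comes from Theorem \ref{thm:finite group} with $G=C_2$, and the lower bound from Proposition \ref{prop:basic}(2) together with the non-contractibility of $\mathbb{RP}^\infty\simeq BC_2$. The paper compresses this into a single sentence, but the logic is identical.
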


In contrast, the Eilenberg--Ganea theorem implies that the category of $BG$ is infinite for $G$ finite. In spite of this discrepancy, as the following result asserts, a direct analogue of the Eilenberg--Ganea theorem holds in the torsion-free setting.

\begin{theorem}\label{thm:cd}
For any torsion-free group $G$, we have $\mathrm{acat}(BG)=\mathrm{cd}(G)$. 
\end{theorem}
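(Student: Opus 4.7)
I recast the condition $\mathrm{acat}(BG)\leq n$ as the existence of a $G$-equivariant map $EG \to \mathrm{sk}_n(\Delta^G)$ and prove both inequalities using this reformulation. The geometric coincidence underpinning the argument is that $\mathcal{P}(G)$ itself serves as a free $G$-CW model of $EG$ whose $n$-skeleton is precisely the fiber $\mathrm{sk}_n(\Delta^G)$ appearing in $\mathcal{P}_{n+1}(\pi_{BG})$.

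\textbf{Reformulation.} By Corollary \ref{cor:derived} I may replace $\pi_{BG}$ with the universal cover $p\colon EG\to BG$, a principal $G$-bundle with contractible total space. A convenient concrete model is $EG := \mathcal{P}(G)$: it is contractible via linear contraction to $\delta_e$, and the translation action of $G$ is free on cells because torsion-freeness forces the setwise stabilizer of any finite subset $\sigma\subseteq G$ (an injection into $\mathrm{Sym}(\sigma)$) to be trivial. Lemma \ref{lem:bundle comparison} and Corollary \ref{cor:fiber bundle} then identify
\[
\mathcal{P}_{n+1}(p) \cong EG \times_G \mathrm{sk}_n(\Delta^G),
\]
and sections of an associated bundle of a principal bundle correspond bijectively to equivariant maps out of the total space. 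Thus $\mathrm{acat}(BG)\leq n$ iff a $G$-equivariant map $EG\to \mathrm{sk}_n(\Delta^G)$ exists.

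\textbf{Upper bound.} Assuming $\mathrm{cd}(G)\leq n$, I construct $EG\to \mathrm{sk}_n(\Delta^G)$ by equivariant obstruction theory on the free $G$-CW structure of $\mathcal{P}(G)$. When $\mathrm{cd}(G)\geq 1$ the group $G$ is infinite, so $\mathrm{sk}_n(\Delta^G)$ is the $n$-skeleton of a simplex on infinitely many vertices and is $(n-1)$-connected. The only potential obstructions therefore live in $H^{k+1}(BG;\pi_k(\mathrm{sk}_n(\Delta^G)))$ (with local coefficients) for $k\geq n$, and these groups vanish by hypothesis on $\mathrm{cd}(G)$.

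\textbf{Lower bound and main difficulty.} Conversely, given such an equivariant $f$, the composite $EG \xrightarrow{f} \mathrm{sk}_n(\Delta^G) \hookrightarrow \mathcal{P}(G) = EG$ is a $G$-equivariant self-map of a contractible free $G$-space, hence $G$-equivariantly homotopic to the identity by uniqueness. Passing to $G$-orbits produces a factorization
\[
BG \xrightarrow{\,g\,} BG^{(n)} := \mathrm{sk}_n(\Delta^G)/G \xrightarrow{\,\iota\,} BG
\]
with $\iota\circ g\simeq \mathrm{id}_{BG}$; since $\dim BG^{(n)}\leq n$ and $\iota^*$ is split injective on $H^*(BG;\mathcal{M})$ for any $G$-module $\mathcal{M}$ (with splitting $g^*$, using $g^*\iota^*\mathcal{M} \cong \mathcal{M}$), we conclude $H^k(BG;\mathcal{M}) = 0$ for all $k>n$, i.e., $\mathrm{cd}(G)\leq n$. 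The principal difficulty is the reformulation step: one must combine the convenient-topological bundle theory of Corollary \ref{cor:fiber bundle} with the principal-bundle translation of sections into equivariant maps, and handle the equivariant obstruction theory with local coefficients. Once this scaffolding is in place, the upper bound is a clean dimension-counting argument and the lower bound is the retraction observation above.
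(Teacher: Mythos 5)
Your proof is correct and follows essentially the same route as the paper's: the same identification of $\mathcal{P}_{n+1}$ of the universal cover with the associated bundle $EG\times_G\mathrm{sk}_n(\Delta^G)$ (your translation into equivariant maps $EG\to\mathrm{sk}_n(\Delta^G)$ is equivalent to the paper's sections, and your finite-stabilizer observation is Lemma \ref{lem:free action}), the same obstruction-theoretic upper bound, and the same retraction-through-an-$n$-dimensional-complex lower bound. The one genuine refinement is your lower bound, which, by pulling an arbitrary local system back along $\iota:\mathrm{sk}_n(\Delta^G)/G\to BG$ and using split injectivity from $\iota\circ g\simeq\mathrm{id}$, treats all $n$ uniformly and avoids the paper's separate case analysis for $n=0$ and $n=1$.
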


In light of Corollary \ref{cor:derived} and Remark \ref{remark:diagonal}, this result amounts to calculating the analog sectional category of the covering map $q:EG\to BG$, a fiber bundle with fiber $G$ and structure group $G$. By Example \ref{example:discrete}, Corollary \ref{cor:fiber bundle} and Lemma \ref{lem:bundle comparison}, there is a homeomorphism $\mathcal{P}(q)\cong EG\times_G\Delta^G$ over $BG$, under which the $(n+1)$st stage of the cardinality filtration is identified with $EG\times_G\Delta^G_n$ (here, in a standard notational abuse, we refer to the Borel construction or balanced product, not to a pullback).

\begin{lemma}\label{lem:free action}
The group $G$ is torsion-free if and only if the canonical action of $G$ on $\Delta^G$ is free.
\end{lemma}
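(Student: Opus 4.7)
The plan is to handle the two implications separately, using Lemma 2.2 on uniqueness of lowest-terms representations throughout. Recall that $\Delta^G \cong \mathcal{P}(G)$ consists of finitely supported probability measures on $G$ (with its discrete topology), and the canonical action is induced from left translation, i.e., $h \cdot \sum t_i \delta_{g_i} = \sum t_i \delta_{hg_i}$.

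For the forward direction, suppose $G$ has torsion and choose $h \neq 1$ of finite order $n$. The point
\[
\mu = \frac{1}{n}\sum_{i=0}^{n-1}\delta_{h^i} \in \Delta^G
\]
is well-defined since $\langle h \rangle$ is finite, and left multiplication by $h$ permutes $\langle h \rangle$, so $h \cdot \mu = \mu$. Hence the action is not free.

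For the reverse direction, suppose $G$ is torsion-free and that $h \cdot \mu = \mu$ for some $h \in G$ and $\mu \in \Delta^G$. Writing $\mu = \sum_{i=1}^n t_i \delta_{g_i}$ in lowest terms (with $t_i > 0$ and $g_i$ distinct), the expression $h \cdot \mu = \sum_{i=1}^n t_i \delta_{hg_i}$ is again in lowest terms, so by Lemma \ref{lem:lowest terms} there is a permutation $\sigma \in \Sigma_n$ with $hg_i = g_{\sigma(i)}$ for all $i$. In particular, left multiplication by $h$ preserves the finite set $S = \{g_1, \ldots, g_n\}$, so some power $h^k$ with $k \geq 1$ fixes some element of $S$. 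From $h^k g_i = g_i$ we obtain $h^k = 1$, and torsion-freeness forces $h = 1$.

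The argument is essentially formal and I do not anticipate any serious obstacle; the one point requiring care is to invoke uniqueness of lowest-terms representation (rather than arguing on the underlying coset space), so that the fact that $h$ permutes $\mathrm{supp}(\mu)$ is rigorously available.
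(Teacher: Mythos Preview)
Your proof is correct. The forward direction matches the paper's exactly: both exhibit the barycenter of the face spanned by the cyclic subgroup $\langle h\rangle$ as a fixed point.

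For the reverse direction, your argument is cleaner than the paper's. The paper argues by contradiction: assuming a nontrivial $g$ of infinite order fixes $x$, it normalizes so that $g_1=1$ (by conjugating $g$), then observes that $g$ must carry $1$ to some other support point, say $g_2=g$, with the same weight, and iterates to force the support to be $\{1,g,g^2,\ldots,g^{n-1}\}$ with equal weights; one more application of $g$ then shows $g^n$ lies in this set, contradicting infinite order. Your argument bypasses this orbit-tracking entirely: once you know $h$ permutes the finite support $S$, the induced permutation has finite order $k\geq 1$, so $h^k$ fixes each $g_i$ pointwise, giving $h^k=1$ and hence $h=1$. This is shorter and avoids both the normalization step and the explicit barycenter description; the paper's version has the minor advantage of identifying the would-be fixed point concretely, but that information is not used elsewhere.
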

\begin{proof}
If $g\in G$ has finite order, then $g$ fixes the barycenter of the face of $\Delta^G$ spanned by the powers of $g$. Conversely, suppose that $g$ has infinite order and fixes $x\in \Delta^G$. The point $x$ lies in the face spanned by some finite subset $\{g_1,\ldots, g_n\}\subseteq G$. Writing $(t_1,\ldots, t_n)$ for its barycentric coordinates in this face, we may assume without loss of generality that $t_i\neq 0$ for $1\leq i\leq n$. After conjugating $g$ if need be, we may further assume that $g_1=1$. Since $g$ is nontrivial and fixes $x$, it follows without loss of generality that $g_2=g$ and $t_1=t_2$. Continuing in this way, and using our assumption that $g$ has infinite order, it follows that $x$ is the barycenter of the simplex spanned by the set $\{1, g, g^2,\ldots, g^{n-1}\}$. Using once more the assumption that $g$ fixes $x$, it follows that $g^n$ lies in this set, a contradiction.
\end{proof}

\begin{proof}[Proof of Theorem \ref{thm:cd}]
As observed above, we wish to calculate the minimal $n$ for which the bundle $EG\times_G\Delta^G_{n}\to BG$ admits a section.

We first show that the bundle in question admits a section when $n=\mathrm{cd}(G)$ using obstruction theory. Having constructed a section over the $k$-skeleton, the case $k=0$ being trivial, the obstruction to extending this section, perhaps after adjustment by a homotopy, lies in $H^{k+1}(BG; {\pi_k(\Delta^G_n)})$. If $k<n$, then $\pi_k(\Delta_n^G)\cong \pi_k(\Delta^G)=0$; and, if $k\geq n=\mathrm{cd}(G)$, then $H^{k+1}(BG;M)=0$ for every local coefficient system $M$. In any case, the group containing the obstruction vanishes, so the section extends to all of $BG$.

Finally, suppose for contradiction that a section exists when $n=\mathrm{cd}(G)-1$, and note that $EG\times_G\Delta_n^G\simeq \Delta_n^G/G$, since the action of $G$ on $\Delta^G$ is free by Lemma \ref{lem:free action}. In the case $n=0$, we have $\Delta^G_0=G$, and it follows that $BG$ is a homotopy retract of a point, hence contractible, so $\mathrm{cd}(G)=0$, a contradiction. In the case $n=1$, it follows that $BG$ is a homotopy retract of a bouquet of circles, whence $G$ is a retract of a free group, hence free; therefore, we have $\mathrm{cd}(G)=1$, a contradiction. In the case $n>1$, choose a $G$-module $M$ such that $H^{n+1}(BG;M)\neq0$. Since $n>1$, the map $EG\times_G\Delta_n^G\to BG$ induces an isomorphism on fundamental groups, so $M$ defines a system of local coefficients on the source as well. A section induces the second arrow in the factorization
\[H^{n+1}(BG;M)\to H^{n+1}(\Delta_n^G/G;M)\to H^{n+1}(BG;M)\]
of the identity map. Since $\Delta_n^G/G$ is a CW complex of dimension $n$, the middle group is trivial, so the source is also trivial, a contradiction.
\end{proof}

\begin{remark}
Only the last paragraph of this proof uses our assumption that $G$ is torsion-free. In the presence of torsion, the argument shows that $EG\times_G\Delta^G\to BG$ admits a section, which is obvious, since this map is a trivial fibration. In the finite case, since $\Delta^G=\Delta^G_{|G|}$, we obtain an alternative argument for Theorem \ref{thm:finite group}.
\end{remark}

For $r>1$, the upper bound still holds. 

\begin{corollary}\label{cor:cd ATC}
For any torsion-free group $G$ and $r>1$, we have $\mathrm{ATC}_r(BG)\leq\mathrm{cd}(G^r)$.
\end{corollary}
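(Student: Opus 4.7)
The strategy is to chain together two results already established: the bound $\mathrm{ATC}_r(X)\leq \mathrm{acat}(X^r)$ from Proposition \ref{prop:basic}(4), and the equality $\mathrm{acat}(BH)=\mathrm{cd}(H)$ from Theorem \ref{thm:cd} for torsion-free groups $H$. The natural candidate for $H$ is $G^r$.

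First, I would observe that a direct product of torsion-free groups is torsion-free: if $(g_1,\ldots,g_r)^n=(g_1^n,\ldots,g_r^n)=(e,\ldots,e)$ with $n>0$, then each $g_i^n=e$, forcing each $g_i=e$. Hence $G^r$ is torsion-free whenever $G$ is, so Theorem \ref{thm:cd} applies to yield $\mathrm{acat}(B(G^r))=\mathrm{cd}(G^r)$.

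Next, I would invoke the standard homotopy equivalence $(BG)^r\simeq B(G^r)$, which holds because the classifying space construction preserves finite products up to homotopy (both sides are aspherical with fundamental group $G^r$, and both are CW complexes). Combined with the homotopy invariance of $\mathrm{acat}$ from Proposition \ref{prop:basic}(1), this gives $\mathrm{acat}((BG)^r)=\mathrm{acat}(B(G^r))=\mathrm{cd}(G^r)$.

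Finally, applying Proposition \ref{prop:basic}(4) to $X=BG$ yields
\[
\mathrm{ATC}_r(BG)\leq \mathrm{acat}((BG)^r)=\mathrm{cd}(G^r),
\]
which is the desired inequality. There is no real obstacle here; the only subtlety worth noting explicitly is that we are allowed to replace $(BG)^r$ by $B(G^r)$ inside $\mathrm{acat}$, and this is purely a homotopy-invariance matter.
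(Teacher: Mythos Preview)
Your proof is correct and follows essentially the same route as the paper: invoke $(BG)^r\simeq B(G^r)$, apply Theorem~\ref{thm:cd} to the torsion-free group $G^r$, and use Proposition~\ref{prop:basic}(4). The paper's one-line proof is slightly terser (it asserts the homeomorphism $BG^r\cong B(G^r)$ rather than a homotopy equivalence), but your added detail on why $G^r$ remains torsion-free and why the homotopy replacement is justified is welcome.
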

\begin{proof}
Since $BG^r\cong B(G^r)$, the claim follows from Proposition \ref{prop:basic} and Theorem \ref{thm:cd}.
\end{proof}

\begin{remark}
This inequality can be strict, even for free products of free Abelian groups, since it is already strict for $\mathrm{TC}_r$ (see \cite{AguilarGuzmanGonzalezOprea:RAAGPPTCGF}, for example).
\end{remark}

We close this section by identifying a potential cohomological avenue for interrogating the inequality of Corollary \ref{cor:cd ATC}. In what follows, we regard $G^r$ as a right $G$-set via the diagonal.

\begin{theorem}\label{thm:ATC cd equality}
Let $G$ be a torsion-free group, fix $r>1$, suppose $\mathrm{cd}(G^r)-1=n>1$, and let $M$ be a $G^r$-module such that $H^{n+1}(BG^r;M)\neq0$. The inequality of Corollary \ref{cor:cd ATC} is an equality provided the differential 
\[d_{n+1}:H^0\left(BG^r; H^n(\Delta_n^{G^r/G}; M)\right)\to H^{n+1}(BG^r;M)\] is nonzero in the Serre spectral sequence for the fibration $EG^r\times_{G^r}\Delta^{G^r/G}_n\to BG^r$.
\end{theorem}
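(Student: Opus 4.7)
The plan is to argue by contradiction. Analogous to the analysis in the proof of Theorem \ref{thm:cd}, the diagonal $BG \to BG^r$ may be realized as the covering map $EG^r/G \to EG^r/G^r$ with fiber $G^r/G$, and Corollary \ref{cor:fiber bundle} together with Example \ref{example:discrete} identifies the $(n+1)$st cardinality stage of this covering with the bundle $\pi: E := EG^r \times_{G^r} \Delta^{G^r/G}_n \to BG^r$. Hence $\mathrm{ATC}_r(BG) \leq n$ is equivalent to the existence of a section of $\pi$, and precluding such a section for $n = \mathrm{cd}(G^r) - 1$ combined with Corollary \ref{cor:cd ATC} yields $\mathrm{ATC}_r(BG) = \mathrm{cd}(G^r)$.

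First I would record the cohomology of the fiber $F = \Delta^{G^r/G}_n$. Since $F$ is the $n$-skeleton of the contractible simplex $\Delta^{G^r/G}$, it is $(n-1)$-connected, so $H^q(F; N) = 0$ for $0 < q < n$ with any coefficients $N$. Since $n > 1$, $F$ is simply-connected, so the restriction $(\pi^*M)|_F$ is a constant local system with stalk $M$, and the local coefficient system $\mathcal{H}^q(F; M)$ on $BG^r$ is simply the $G^r$-module $H^q(F; M)$ with its natural action inherited from the structure group and from $M$. The Serre spectral sequence
\[
E_2^{p,q} = H^p(BG^r; \mathcal{H}^q(F; M)) \Longrightarrow H^{p+q}(E; \pi^*M)
\]
therefore has only rows $q = 0$ and $q = n$ nonzero. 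A routine degree check shows that no differentials touch $E_r^{0,n}$ or $E_r^{n+1,0}$ on any page other than $d_{n+1}: E_{n+1}^{0,n} = E_2^{0,n} \to E_{n+1}^{n+1,0} = E_2^{n+1,0}$, and hence $E_\infty^{n+1,0} = \coker(d_{n+1})$.

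Under these identifications, the edge homomorphism expresses $\pi^*: H^{n+1}(BG^r; M) \to H^{n+1}(E; \pi^*M)$ as the composite $E_2^{n+1,0} \twoheadrightarrow E_\infty^{n+1,0} \hookrightarrow H^{n+1}(E; \pi^*M)$, whose kernel equals $\im(d_{n+1})$ and is nonzero by hypothesis. On the other hand, a section $s$ of $\pi$ would force $s^* \circ \pi^* = \id$ on $H^{n+1}(BG^r; M)$, making $\pi^*$ injective; this contradiction completes the argument. The main step requiring care is the spectral sequence bookkeeping with local coefficients—in particular, exploiting $n > 1$ to render $(\pi^*M)|_F$ constant and to guarantee $\pi_1(E) \cong G^r$—but once the fiber's cohomology is seen to have only two nonzero rows, the whole argument reduces to the classical edge-homomorphism obstruction to sections.
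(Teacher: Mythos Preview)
Your proof is correct and follows essentially the same approach as the paper's: both reduce to showing that $EG^r\times_{G^r}\Delta^{G^r/G}_n\to BG^r$ admits no section by observing that the Serre spectral sequence (with local coefficients $M$) is concentrated in rows $q=0$ and $q=n$, so that the edge homomorphism $\pi^*$ in degree $n+1$ has kernel $\im(d_{n+1})\neq 0$, contradicting the existence of a section. Your treatment is in fact slightly more explicit than the paper's about the $(n-1)$-connectivity of the fiber and the role of $n>1$ in trivializing the restricted local system, but the argument is the same.
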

\begin{proof}
Consider the following commutative diagram, in which the unmarked arrows are induced by the diagonal homomorphism $G\to G^r$:
\[\xymatrix{
BG^{[0,1]}\ar[d]_-{\pi_{BG}^r}&BG\ar[l]\ar[d]\ar[rr]&&EG^r/G\ar[d]\ar@{=}[r]^-\sim&EG^r\times_{G^r}G^r/G\ar[d]\\
BG^r\ar@{=}[r]&BG^r\ar@{=}[rr]&&BG^r\ar@{=}[r]&BG^r.
}\] In view of Corollary \ref{cor:derived} and Remark \ref{remark:diagonal}, commutativity implies that it suffices to calculate the analog sectional category of the covering map $q_r:EG^r\times_{G^r}G^r/G\to BG^r$, a fiber bundle with fiber $G^r/G$ and structure group $G^r$. Following the reasoning of Theorem \ref{thm:cd}, then, we wish to show that the fiber bundle $EG^r\times_{G^r}\Delta^{G^r/G}_{n}\to BG^r$ admits no section.

Since $n>1$, the fundamental group of the total space is $G^r$, so we may contemplate the spectral sequence in question. By inspection, it is concentrated in the strips $q=0$ and $q=n$, so $E_{\infty}^{n+1,0}=E_{n+1}^{n+1,0}/\mathrm{im}(d_{n+1})$. Our assumption implies that this quotient is nontrivial, and it follows by naturality of the Serre spectral sequence that the homomorphism $H^{n+1}(BG^r;M)\to H^{n+1}(EG^r\times_{G^r}\Delta^{G^r/G}_n;M)$ induced by the projection is not injective. We conclude that this projection admits no section.
\end{proof}

Regrettably, at present, this result remains a tool in search of an application.

\section{Proof of Theorem \ref{thm:point set}}\label{section:proof of point set}

The space $\mathcal{P}(X)$ is the one of interest for our purposes, but it will be convenient in establishing some of its properties to relate it to a larger space of measures of finite support (which need not be probability measures), defined as the quotient 
\[
\mathcal{M}(X)=\faktor{\bigsqcup_{n\geq0}X^n\times \mathbb{R}_{\geq0}^{n}}{\sim}
\] by the same three relations as in Definition \ref{def:probability measures} (note that we include the zero measure). The discussion in and around Lemma \ref{lem:lowest terms} and Definition \ref{def:support} goes through unchanged for the space $\mathcal{M}(X)$, as does the prolongation to a functor. The inclusions of the various simplices induces a canonical continuous injection $\mathcal{P}(X)\to \mathcal{M}(X)$, which we will show to be a closed embedding in the course of the proof of Theorem \ref{thm:point set}.

Given a measure $\mu$ and $A\subseteq X$, we write $\mu(A)=\sum_{x\in A}\mu(x)$.

\begin{lemma}\label{lem:volume}
The assignment $\mu\mapsto \mu(X)$ defines a continuous map $\mathcal{M}(X)\to \mathbb{R}_{\geq0}$.
\end{lemma}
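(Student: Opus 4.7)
The plan is to invoke the universal property of the quotient topology, which reduces the task to two routine checks: well-definedness of the prescription on the disjoint union, and continuity of the resulting composite out of each piece $X^n\times\mathbb{R}_{\geq0}^n$.

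First I would verify well-definedness by showing that each of the three generating relations defining $\mathcal{M}(X)$ preserves the total mass $\sum_{i=1}^n t_i$. Relation (1), permutation, clearly leaves the sum invariant. Relation (2) replaces $(t_1,\ldots,t_{n-1},t_n)$ by $(t_1,\ldots,t_{n-1}+t_n)$ when $x_{n-1}=x_n$, and $\sum_{i=1}^{n-1}t_i+(t_{n-1}+t_n)-t_{n-1}=\sum_{i=1}^nt_i$; relation (3) drops the final coordinate when $t_n=0$, which again preserves the sum. Consequently the assignment descends to a set-theoretic function $v_X:\mathcal{M}(X)\to\mathbb{R}_{\geq0}$.

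Next, since $\mathcal{M}(X)$ carries the quotient topology from $\bigsqcup_{n\geq0}X^n\times\mathbb{R}_{\geq0}^n$, the function $v_X$ is continuous if and only if its pullback to the disjoint union is continuous, which in turn holds if and only if its restriction to each summand $X^n\times\mathbb{R}_{\geq0}^n$ is continuous. But on that summand the map is the composite of the projection $X^n\times\mathbb{R}_{\geq0}^n\to\mathbb{R}_{\geq0}^n$ with the addition map $\mathbb{R}_{\geq0}^n\to\mathbb{R}_{\geq0}$, both of which are manifestly continuous.

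There is no real obstacle here; the statement is essentially a sanity check that the quotient relations are compatible with addition of the mass coordinates. The only thing that might warrant a sentence of comment is that we are working in the convenient category, but since $\mathbb{R}_{\geq0}$ and each $X^n\times\mathbb{R}_{\geq0}^n$ are convenient and addition is continuous in the classical sense (hence in the convenient sense), no additional argument is required.
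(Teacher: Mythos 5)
Your proposal is correct and is essentially the paper's own argument: the paper likewise observes that the map is induced on the quotient by the composites $X^n\times\mathbb{R}^n_{\geq0}\to\mathbb{R}^n_{\geq0}\to\mathbb{R}_{\geq0}$ of projection and addition. Your explicit check that the three generating relations preserve total mass is a detail the paper leaves implicit, but it is the same proof.
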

\begin{proof}
This function is induced by various disjoint unions of composites $X^n\times\mathbb{R}^n_{\geq0}\to \mathbb{R}^n_{\geq0}\to\mathbb{R}_{\geq0}$ of projection and addition of real numbers, each of which is continuous.
\end{proof}

We write $\mathcal{M}_\delta(X)$ for the inverse image of $[0,\delta)\in\mathbb{R}_{\geq0}$ under the map of Lemma \ref{lem:volume}. 

\begin{construction}
For every $m,n\geq0$, we have the canonical homeomorphisms \[X^m\times\mathbb{R}^m_{\geq0}\times X^n\times\mathbb{R}^n_{\geq0}\xrightarrow{\cong} X^{m+n}\times\mathbb{R}^{m+n}_{\geq0}.\] Observing that these maps obviously respect the defining relations of $\mathcal{M}(X)$, we obtain a (a priori discontinuous) function $\mathcal{M}(X)\times\mathcal{M}(X)\to \mathcal{M}(X)$, whose value on $(\mu_1,\mu_2)$ we denote $\mu_1+\mu_2$.
\end{construction}

As shown below in Corollary \ref{cor:monoid}, this map is continuous, but we do not need this fact now.

\begin{construction}\label{construction:basis}
Given $k>0$, tuples $U=(U_1,\ldots, U_k)\in\mathrm{Op}(X)^k$ and $r=(r_1,\ldots, r_k)\in \mathbb{R}^k_{>0}$, and $\epsilon>0$, define $\mathcal{M}(U,r,\epsilon)\subseteq \mathcal{M}(X)$ to be the set of measures $\mu$ such that 
\begin{enumerate}
\item $|\mu(U_j)-r_j|<\epsilon$ for $1\leq j\leq k$, and
\item $\mu(x)=0$ for $x\notin \bigcup_jU_j$.
\end{enumerate}
We further define $\mathcal{M}(U,r,\epsilon,\delta)=\mathcal{M}(U,r,\epsilon)+\mathcal{M}_\delta(X)$.
\end{construction}

\begin{lemma}\label{lem:big basis}
Each of the subsets $\mathcal{M}(U,r,\epsilon,\delta)\subseteq\mathcal{M}(X)$ is open. If $X$ is compact, then the collection of all such forms a basis for the topology of $\mathcal{M}(X)$.
\end{lemma}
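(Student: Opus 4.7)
The plan is to handle the two claims of the lemma separately, using the fact that the quotient topology on $\mathcal{M}(X)$ characterizes open sets by openness of their preimages in each $X^N \times \mathbb{R}^N_{\geq 0}$. For openness of $\mathcal{M}(U,r,\epsilon,\delta)$, I would fix a representative $(y_1,\ldots,y_N,t_1,\ldots,t_N)$ whose class is some $\mu \in \mathcal{M}(U,r,\epsilon,\delta)$ and build an open box around it contained in the preimage. By hypothesis, $\mu = \mu_1+\mu_2$ with $\mu_1 \in \mathcal{M}(U,r,\epsilon)$ and $\mu_2 \in \mathcal{M}_\delta(X)$; since all defining inequalities are strict, this decomposition has positive slack. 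The decomposition lifts to a weight splitting $t_i = t_i^{(1)} + t_i^{(2)}$ at the tuple level, with $t_i^{(1)} = 0$ forced whenever $y_i \notin \bigcup_j U_j$. For each $i$ I would take an open neighborhood $V_i \ni y_i$ contained in the finite intersection $\bigcap_{j : y_i \in U_j} U_j$ (open since each $U_j$ is) together with a small weight tolerance $\tau > 0$. For any perturbed tuple $(y'_i,t'_i)$ in the resulting open box, I would construct the perturbed decomposition $t'_i = (t_i^{(1)})' + (t_i^{(2)})'$ by first preserving every preexisting $U_j$-incidence using the choice of $V_i$, and then, whenever $y'_i$ has entered a $U_j$ to which $y_i$ did not belong, transferring the newly-incident weight from $\mu'_1$ into $\mu'_2$; by choosing $\tau$ small and the $V_i$ tight enough, one keeps every $|\mu'_1(U_j) - r_j|$ within $\epsilon$ and $\mu'_2(X)$ within $\delta$.

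For the basis claim in the compact case, I would start with an open $W \subseteq \mathcal{M}(X)$ containing some $\mu$ and write $\mu = \sum_{i=1}^n s_i \delta_{x_i}$ in lowest terms. Compactness of $X$, together with Hausdorffness implicit in the convenient setting, lets me choose pairwise disjoint open neighborhoods $V_i \ni x_i$. Openness of the preimage of $W$ in $X^n \times \mathbb{R}^n_{\geq 0}$ at $(x_1,\ldots,x_n,s_1,\ldots,s_n)$, and in each $X^{n+m} \times \mathbb{R}^{n+m}_{\geq 0}$ at the zero-padded representatives of $\mu$, allows me to shrink the $V_i$ and choose $\alpha > 0$ so that the product box $\prod_i V_i \times \prod_i (s_i - \alpha, s_i + \alpha)$ lies in the preimage. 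I would then pick $\epsilon < \alpha$ and $\delta > 0$ small, and claim that $\mathcal{M}((V_1,\ldots,V_n),(s_1,\ldots,s_n),\epsilon,\delta) \subseteq W$. Given $\mu' = \mu'_1 + \mu'_2$ in this basic open, disjointness of the $V_i$ forces $\mu'_1(V_i) \in (s_i - \epsilon, s_i + \epsilon)$, so by collecting a point of the support of $\mu'_1$ inside each $V_i$ with weight corrected by small residuals supplied by $\mu'_2$, and listing the remaining support of $\mu'_2$, one produces a representative of $\mu'$ which, after appropriate zero-padding of the $n$-tuple representative of $\mu$, lies in the prechosen open box.

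The principal obstacle is the openness argument, specifically the case where some $y_i$ lies on the topological boundary of a $U_j$ to which it does not belong, so that arbitrarily small perturbations can send $y'_i$ into $U_j$. Compensating for the resulting jump in $\mu'_1(U_j)$ by rerouting mass into $\mu'_2$ requires that the $\mathcal{M}_\delta$-budget absorb the redistribution, while any induced decrease in the other $\mu'_1(U_{j'})$ still leaves them within their $\epsilon$-tolerances. Balancing these competing constraints uniformly, via a single choice of $\tau$ and of the $V_i$'s, is the delicate technical step I expect to demand the most care.
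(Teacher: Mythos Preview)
Your overall strategy for both claims mirrors the paper's: for openness you lift the decomposition $\mu=\mu_1+\mu_2$ to the representing tuple, surround each coordinate by a neighborhood $V_i\subseteq\bigcap_{j:y_i\in U_j}U_j$, and try to show that every perturbation still admits a valid decomposition; for the basis claim you exploit compactness to get uniform tolerances. The paper proceeds the same way, except that for the basis claim it invokes the tube lemma once at level $X^{n+1}$ (one extra $X$-slot with weight in $[0,\delta)$) rather than trying to control all levels $X^{n+m}$ simultaneously as you sketch. Your version of that step would need more care to extract a single $\delta$ valid for every $m$; the paper's tube-lemma trick is what makes this uniform.

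The serious issue is your ``transferring'' device for the first claim: it is not merely delicate but unworkable in the stated generality. Take $X=[0,1]$, $U_1=(0,\tfrac12)$, $U_2=(\tfrac14,1)$, $r=(0.9,0.1)$, $\epsilon=0.05$, $\delta=0.01$, and $\mu=0.9\,\delta_{1/4}+0.1\,\delta_{3/4}$; then $\mu\in\mathcal{M}(U,r,\epsilon,\delta)$ with $\mu_2=0$. For any $\eta>0$ the perturbation $\mu'=0.9\,\delta_{1/4+\eta}+0.1\,\delta_{3/4}$ has both atoms in $U_2$, so any splitting $\mu'=\mu'_1+\mu'_2$ with $\mu'_2(X)<0.01$ forces $\mu'_1(U_2)=\mu'_1(X)>0.99$, while we would need $\mu'_1(U_2)<0.15$; hence $\mu'\notin\mathcal{M}(U,r,\epsilon,\delta)$ and the set is \emph{not} open. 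Your transfer fails here because the weight $0.9$ that must be rerouted into $\mu'_2$ dwarfs $\delta$. The paper's written argument shares this gap: its $V_i$ does nothing to prevent a point from entering a new $U_j$, and the bound $\tilde\epsilon<\min_j\frac{\epsilon-|\mu_1(U_j)-r_j|}{n_0}$ only controls the \emph{downward} deviation of $\mu'_1(U_j)$. So the first assertion of the lemma appears false for overlapping $U_j$. Fortunately every use in the paper (Hausdorffness, closedness of the image, the basis argument itself) either has $k=1$ or may take the $U_j$ pairwise disjoint, and under that hypothesis the boundary pathology disappears and both your argument and the paper's go through cleanly.
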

\begin{proof}
For the first claim, we are required to verify that $q_m^{-1}\mathcal{M}(U,r,\epsilon,\delta)$ is open for each $m$, where $q_m:X^m\times\mathbb{R}^m_{\geq0}\to \mathcal{M}(X)$ is the projection. Now, if $\mu=\sum_{i=1}^nt_i \delta_{x_i}$, then 
\[
q_m^{-1}(\mu)=\Sigma_m\cdot\left(\bigcup_{m_0+\cdots+m_n=m} X^{m_0}\times \prod_{i=1}^n \Delta_{m_i}(x_i)\times\{0\}^{m_0}\times \prod_{i=1}^n \alpha_{m_i}^{-1}(t_i),
\right)\] where $\Delta_a:X\to X^a$ is the diagonal and $\alpha_a:\mathbb{R}^a_{\geq0}\to\mathbb{R}_{\geq0}$ is addition. Assuming that $\mu\in \mathcal{M}(U,r,\epsilon,\delta)$, we may write $\mu_1=\sum_{i=1}^{n_0}t_i\delta_{x_i}$ and $\mu_2=\sum_{i=n_0+1}^n t_i\delta_{x_i}$ such that $x_i\in\bigcup_jU_j$ for $i\leq n_0$ (without loss of generality), $|\mu_1(U_j)-r_j|<\epsilon$ for each $j$, and $\mu_2(X)<\delta$. For each $i$, let 
\[V_i=\begin{cases}
\bigcap_{j: x_i\in U_j} U_j&\quad \text{if }i\leq n_0\\
X&\quad\text{otherwise.}
\end{cases}
\] For any $\tilde\epsilon,\tilde\delta>0$, the subset $X^{m_0}\times \prod_{i=1}^n \Delta_{m_i}(x_i)\times\{0\}^{m_0}\times \prod_{i=1}^n \alpha_{m_i}^{-1}(t_i)$ is contained in the open neighborhood\[X^{m_0}\times\prod_{i=1}^n V_i^{m_i}\times [0,\tilde\delta)^{m_0}\times\prod_{i=1}^n\alpha^{-1}_{m_i}(t_i-\tilde\epsilon,t_i+\tilde\epsilon),\] and the image of this open neighborhood\footnote{We use that the topology of the convenient product is \emph{finer} than the ordinary product topology.} under $q_m$ lies in $\mathcal{M}(U,r,\epsilon,\delta)$ provided we choose $\tilde\epsilon<\min_j\frac{\epsilon-|\mu_1(U_j)-r_j|}{n_0}$ and $\tilde\delta<\frac{\delta-\mu_2(X)}{m_0+n-n_0}$. The claim follows.

For the second claim, we begin by noting that, since $X$ and $\mathbb{R}_{\geq0}$ are locally compact Hausdorff, the convenient product $X^n\times\mathbb{R}_{\geq0}^n$ carries the ordinary product topology by Proposition \ref{prop:locally compact}. Choose an open subset $V\subseteq \mathcal{M}(X)$ and $\mu\in V$, writing $\mu=\sum_{i=1}^nt_i\delta_{x_i}$ in lowest terms. The point $(x,x_1,\ldots, x_n, 0,t_1,\ldots, t_n)$ lies in $q_{n+1}^{-1}(\mu)$ for every $x\in X$, so there exist open subsets $x_i\in U_i\subseteq X$ and sufficiently small $\epsilon,\delta>0$ such that \[
X\times \prod_{i=1}^nU_i\times [0,\delta)\times\prod_{i=1}^n(t_i-\epsilon, t_i+\epsilon)\subseteq q_{n+1}^{-1}(V)
\] (here we use the tube lemma, hence our assumption on $X$). It follows that $V$ contains $\mathcal{M}(U,t,\epsilon,\delta)$, and the claim follows.
\end{proof}

\begin{corollary}\label{cor:big hausdorff}
The space $\mathcal{M}(X)$ is convenient.
\end{corollary}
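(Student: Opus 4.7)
The plan is to split the task into verifying the two ingredients of convenience in Steenrod's sense: compact generation and the Hausdorff property. Compact generation is free, since $\mathcal{M}(X)$ is the quotient of the convenient space $\bigsqcup_{n \geq 0} X^n \times \mathbb{R}_{\geq 0}^n$ (per the stated conventions, convenient colimits coincide with ordinary colimits), and quotients of compactly generated spaces are compactly generated. So the real content is to show that $\mathcal{M}(X)$ is Hausdorff.

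To separate two distinct measures $\mu \neq \nu$, I would use the basic sets $\mathcal{M}(U, r, \epsilon, \delta)$ of Construction \ref{construction:basis}; their openness is supplied by the first clause of Lemma \ref{lem:big basis}, which uses no compactness assumption on $X$. Writing both measures in lowest terms and letting $S = \mathrm{supp}(\mu) \cup \mathrm{supp}(\nu)$, I would choose pairwise disjoint open neighborhoods $U_q \subseteq X$ of each $q \in S$ using Hausdorffness of $X$; disjointness then yields $\mu(U_q) = \mu(q)$ and $\nu(U_q) = \nu(q)$. Setting $U = (U_q)_{q \in S}$, define the tuples $r^\mu$ and $r^\nu$ by the corresponding weights where positive, inserting a small positive placeholder where a weight vanishes so that the strict positivity required of $r$ holds. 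Then $\mu$ and $\nu$ respectively lie in $\mathcal{M}(U, r^\mu, \epsilon, \delta)$ and $\mathcal{M}(U, r^\nu, \epsilon, \delta)$ for all sufficiently small $\epsilon, \delta > 0$.

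For disjointness, fix $q_0 \in S$ with $\mu(q_0) \neq \nu(q_0)$. The key observation is that the addition $\lambda = \lambda_1 + \lambda_2$ appearing in the definition of $\mathcal{M}(U, r, \epsilon, \delta)$ is just pointwise addition of atomic weights, so $\lambda_1(A) \leq \lambda(A)$ for every $A \subseteq X$; hence any $\lambda$ lying in both basic neighborhoods would have $\lambda(U_{q_0})$ trapped inside both intervals $(r^\mu_{q_0} - \epsilon, r^\mu_{q_0} + \epsilon + \delta)$ and $(r^\nu_{q_0} - \epsilon, r^\nu_{q_0} + \epsilon + \delta)$, which become disjoint as soon as $2\epsilon + \delta$ is strictly less than $|r^\mu_{q_0} - r^\nu_{q_0}|$. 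The only obstacle worth flagging is the bookkeeping when one of the two weights vanishes at $q_0$: one must coordinate the placeholder value with $\epsilon$ and $\delta$, choosing $\epsilon$ small compared to the nonzero weight, placeholder of size $\epsilon/2$, and $\delta$ small enough that the strict inequality is preserved. This is routine once the pieces are assembled, and completes the proof.
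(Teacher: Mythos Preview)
Your proposal is correct and follows essentially the same route as the paper. The paper simply invokes Proposition~\ref{prop:quotient} (a Hausdorff quotient of a convenient space is convenient) to reduce to the Hausdorff property, and then asserts in one line that distinct measures are separated by sets of the form $\mathcal{M}(U,r,\epsilon,\delta)$ from Lemma~\ref{lem:big basis}; you carry out that separation explicitly, which is fine. The only cosmetic difference is that you split convenience into ``compact generation'' plus ``Hausdorff'' by hand, whereas the paper packages both into a single citation of Proposition~\ref{prop:quotient}.
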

\begin{proof}
It suffices by Proposition \ref{prop:quotient} to show that $\mathcal{M}(X)$ is Hausdorff, which follows from Lemma \ref{lem:big basis}, since any two measures can be separated by sets of the form $\mathcal{M}(U,r,\epsilon,\delta)$.
\end{proof}

\begin{corollary}\label{cor:small hausdorff}
The space $\mathcal{P}(X)$ is convenient.
\end{corollary}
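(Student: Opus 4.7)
The plan is to reduce this to the corresponding fact for $\mathcal{M}(X)$ just established in Corollary \ref{cor:big hausdorff}. By construction, $\mathcal{P}(X)$ is the quotient of the disjoint union $\bigsqcup_{n>0} X^n \times \Delta^{n-1}$. Since $X$ is convenient by the standing hypothesis, each $\Delta^{n-1}$ is compact Hausdorff (hence convenient), and the class of convenient spaces is closed under finite products and coproducts, this disjoint union is convenient. Appealing to Proposition \ref{prop:quotient}, which characterizes convenient quotients of convenient spaces in terms of Hausdorffness, it therefore suffices to verify that $\mathcal{P}(X)$ is Hausdorff.

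To do so, I would invoke the canonical continuous injection $\iota:\mathcal{P}(X)\to \mathcal{M}(X)$ noted in the paragraph preceding Lemma \ref{lem:volume}. Given distinct points $\mu,\nu\in \mathcal{P}(X)$, their images $\iota(\mu)$ and $\iota(\nu)$ are distinct, and by Corollary \ref{cor:big hausdorff} can be separated by disjoint open sets $U, V\subseteq \mathcal{M}(X)$. Their preimages $\iota^{-1}(U)$ and $\iota^{-1}(V)$ are then disjoint open neighborhoods of $\mu$ and $\nu$ in $\mathcal{P}(X)$, by continuity of $\iota$. This delivers the Hausdorff property.

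I do not anticipate any serious obstacle, since the substantive content was absorbed into Lemma \ref{lem:big basis} and Corollary \ref{cor:big hausdorff}; the remainder is the standard observation that a continuous injection into a Hausdorff space pulls back the Hausdorff separation axiom. The argument is thus purely formal, occupying at most a few lines once the relevant propositions are in hand.
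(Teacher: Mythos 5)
Your argument is exactly the paper's proof: reduce to Hausdorffness via Proposition \ref{prop:quotient} and deduce it from the continuous injection $\mathcal{P}(X)\to\mathcal{M}(X)$ together with Corollary \ref{cor:big hausdorff}. No issues.
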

\begin{proof}
It suffices by Proposition \ref{prop:quotient} to show that $\mathcal{P}(X)$ is Hausdorff, which is immediate from Corollary \ref{cor:big hausdorff} and the existence of the continuous injection $\mathcal{P}(X)\to \mathcal{M}(X)$.
\end{proof}

We also record the following fact, which we do not use.

\begin{corollary}\label{cor:monoid}
Addition of measures equips $\mathcal{M}(X)$ with the structure of a unital Abelian topological monoid.
\end{corollary}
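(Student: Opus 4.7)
The algebraic part is essentially built into the construction. Specifically, the unique point of $X^0\times\mathbb{R}^0_{\geq 0}$ represents the zero measure, which the concatenation formula shows to be a two-sided identity; commutativity and associativity hold already at the level of the source of the defining quotient, after one notes that any reordering of the $(x_i,t_i)$-pairs is absorbed into the $\Sigma_n$-relation of Definition \ref{def:probability measures}. Thus the only substantive content is the continuity of the addition map $+:\mathcal{M}(X)\times\mathcal{M}(X)\to\mathcal{M}(X)$, where the product is taken in the convenient category.

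The plan for continuity is to exhibit $+$ as descending from an evidently continuous map between the quotient sources. By Proposition \ref{prop:quotient product}, the convenient product of the two defining quotient maps yields a quotient map
\[
\bigsqcup_{m,n\geq 0} X^m\times\mathbb{R}^m_{\geq 0}\times X^n\times\mathbb{R}^n_{\geq 0}\longrightarrow \mathcal{M}(X)\times\mathcal{M}(X).
\]
On each summand the canonical homeomorphism $X^m\times\mathbb{R}^m_{\geq 0}\times X^n\times\mathbb{R}^n_{\geq 0}\xrightarrow{\cong} X^{m+n}\times\mathbb{R}^{m+n}_{\geq 0}$ composed with the quotient map into $\mathcal{M}(X)$ is continuous, and by the very definition of $+$ (Construction preceding the corollary) this composite agrees with the composite of $+$ with the quotient map above. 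The universal property of quotient maps then delivers continuity of $+$.

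The only subtle point is the invocation of Proposition \ref{prop:quotient product}, which is precisely why we are working in the convenient setting; in the naive category of topological spaces, products of quotient maps can fail to be quotient maps, so the argument would not close. Since $\mathcal{M}(X)$ is convenient by Corollary \ref{cor:big hausdorff}, however, this is not an obstruction, and the result follows.
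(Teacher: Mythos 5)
Your proposal is correct and follows the paper's own argument: the paper likewise treats the algebraic identities as immediate and reduces everything to continuity of addition, which it obtains exactly as you do from Proposition \ref{prop:quotient product} together with the convenience of $\mathcal{M}(X)$ established in Corollary \ref{cor:big hausdorff}. Your write-up simply spells out the descent-along-quotient-maps step that the paper leaves implicit.
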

\begin{proof}
The main point is that, since $\mathcal{M}(X)$ is convenient by Corollary \ref{cor:big hausdorff}, addition of measures is continuous by Proposition \ref{prop:quotient product}.
\end{proof}

\begin{proof}[Proof of Theorem \ref{thm:point set}]
We have already established the claim of convenience. Given a homotopy $H:[0,1]\times X\to Y$ between $f$ and $g$, Proposition \ref{prop:quotient product} implies that the composites
\[
[0,1]\times X^n\times\Delta^{n-1}\xrightarrow{\Delta\times \id} [0,1]^n\times X^n\times\Delta^{n-1}\xrightarrow{H^n\times \id} Y^n\times\Delta^{n-1}
\] descend to a homotopy between $f_*$ and $g_*$.

Since colimits commute, and since convenient colimits are ordinary colimits, the claim regarding sifted colimits is a standard consequence of the fact that the convenient product distributes over colimits by Proposition \ref{prop:cartesian}. For the claim regarding quotient maps, suppose that $f:X\to Y$ is a quotient map, and consider the following commutative diagram:
\[
\xymatrix{
\displaystyle\bigsqcup_{n>0}X^n\times\Delta^{n-1}\ar[d]\ar[r]&\displaystyle\bigsqcup_{n>0}Y^n\times\Delta^{n-1}\ar[d]\\
\mathcal{P}(X)\ar[r]^-{f_*}&\mathcal{P}(Y).
}
\] The vertical maps are quotient maps by definition, and the top horizontal map is a quotient map by Proposition \ref{prop:quotient product}. It follows that the counterclockwise composite is a quotient map, then that $f_*$ is so. 

For the claim regarding embeddings, we first establish the corresponding claim for $\mathcal{M}$. Given a closed subspace $A\subseteq X$, note first that $A$ is again convenient, so $\mathcal{M}(A)$ is defined. Second, the induced map $\mathcal{M}(A)\to \mathcal{M}(X)$ is injective. Third, the image of this map is closed, since the measure $\mu$ lying outside the image is separated from it by the open set $\mathcal{M}(X\setminus A,\mu(X\setminus A), \epsilon,\delta)$ provided $\epsilon<\mu(X\setminus A)$. It remains to show that the map is a topological embedding.

Assume first that $A$ is compact. Given a basis element $\mathcal{M}(U,r,\epsilon,\delta)$ for $\mathcal{M}(A)$ as in Lemma \ref{lem:big basis}, choose open subsets $\widetilde U_i\subseteq X$ with $\widetilde U_i\cap A=U_i$. Then the intersection of $\mathcal{M}(\widetilde U,t,\epsilon,\delta)$ with the image of $\mathcal{M}(A)$ is precisely the image of $\mathcal{M}(U,t,\epsilon,\delta)$. It follows that the natural map $\mathcal{M}(A)\to \mathcal{M}(X)$ is an embedding.

In the general case, writing $\mathcal{K}$ for the collection of compact subsets of $X$, partially ordered by inclusion, consider the commutative diagram
\[\xymatrix{
\colim_{K\in\mathcal{K}}\mathcal{M}(K\cap A)\ar[d]\ar[r]&\colim_{K\in\mathcal{K}}\mathcal{M}(K)\ar[d]\\
\mathcal{M}(\colim_{K\in\mathcal{K}} K\cap A)\ar[d]\ar[r]&\mathcal{M}(\colim_{K\in\mathcal{K}} K)\ar[d]\\
\mathcal{M}(A)\ar[r]&\mathcal{M}(X).
}\] Since $\mathcal{K}$ is a filtered, hence sifted, category, the top two vertical arrows are homeomorphisms, and the bottom two vertical arrows are homeomorphisms by convenience. Thus, it will suffice to show, given a closed set $i:C\subseteq \mathcal{M}(A)$ and compact $j:K\subseteq X$, that $j_*^{-1}i_*(C)$ is closed. By the previous case, the top horizontal arrow in the commutative diagram
\[
\xymatrix{
\mathcal{M}(K\cap A)\ar[r]^-{\tilde i_*}\ar[d]_-{\tilde j_*}&\mathcal{M}(K)\ar[d]^-{j_*}\\
\mathcal{M}(A)\ar[r]^{i_*}&\mathcal{M}(X)
}
\] is a closed embedding, the remaining arrows are injective, and the diagram is a pullback at the level of sets, so $j_*^{-1}i_*(C)=\tilde i_*(\tilde j_*)^{-1}(C)$ is closed, as desired.

To conclude, it suffices to establish that the canonical map $\mathcal{P}(X)\to \mathcal{M}(X)$ is a closed embedding. By the same colimit-and-pullback argument as above, we may assume that $X$ is compact. In this case, each of the restrictions $\mathcal{P}_n(X)\to \mathcal{M}_n(X)$ is a closed embedding, as it is a continuous injection with compact source and Hausdorff target. The claim then follows by another colimit-and-pullback argument, where the colimit is taken over $n$.
\end{proof}

\begin{appendix}

\section{Convenient topology}\label{appendix:convenient}

Here we review the point set topological results from \cite{Steenrod:CCTS} we require.

\begin{definition}
We say that a topological space $X$ is \emph{convenient} if it is Hausdorff and carries the weak topology with respect to its compact subsets.
\end{definition}

\begin{remark}
Convenient spaces are the same thing as compactly generated spaces in the sense of \cite{Steenrod:CCTS}. We avoid this terminology, as it has become fraught with ambiguity through other usage.
\end{remark}

According to \cite[3.2]{Steenrod:CCTS}, convenient spaces form a coreflective subcategory of the category of all Hausdorff spaces; in particular, convenient colimits are ordinary colimits. The value of the right adjoint $k$ on a Hausdorff space $X$ is the same set endowed with the weak topology with respect to the compact subsets of $X$. Since $k$ is a right adjoint and $k^2$ is naturally isomorphic to the identity, convenient limits are formed by applying $k$ to the corresponding ordinary limit.

\begin{proposition}[{\cite[2.6]{Steenrod:CCTS}}]\label{prop:quotient}
Let $q:X\to Y$ be a quotient map. If $X$ is convenient and $Y$ Hausdorff, then $Y$ is convenient.
\end{proposition}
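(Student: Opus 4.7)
The plan is to verify directly that $Y$ carries the weak topology with respect to its compact subsets; Hausdorffness is already given by hypothesis. Concretely, given a subset $A\subseteq Y$ with the property that $A\cap K$ is closed in $K$ for every compact $K\subseteq Y$, I want to show that $A$ is closed in $Y$. Since $q$ is a quotient map, it suffices to show that $q^{-1}(A)$ is closed in $X$, and since $X$ is convenient, this in turn reduces to showing that $q^{-1}(A)\cap L$ is closed in $L$ for every compact $L\subseteq X$.

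The key observation is that if $L\subseteq X$ is compact, then its image $q(L)\subseteq Y$ is compact, being the continuous image of a compact set. Moreover, one checks at the level of points that
\[
q^{-1}(A)\cap L \;=\; (q|_L)^{-1}\bigl(A\cap q(L)\bigr),
\]
where $q|_L:L\to q(L)$ is the continuous restriction. By the hypothesis on $A$, the set $A\cap q(L)$ is closed in $q(L)$, so its preimage under the continuous map $q|_L$ is closed in $L$. This gives the required closedness and completes the argument.

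The two directions of the weak topology characterization are handled asymmetrically: the ``only if'' direction (that closed sets in $Y$ meet compact sets in closed subsets) is automatic from continuity of inclusions, while the ``if'' direction is the content above. I would also remark that the argument does not require one to realize $Y$ as a specific colimit; it is purely a matter of testing closedness against compact subsets on both sides of $q$ and transporting across the quotient.

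There is no real obstacle here: the whole proof rests on the single identity $q^{-1}(A)\cap L=(q|_L)^{-1}(A\cap q(L))$ together with compactness of $q(L)$. The only mild subtlety is keeping straight which topology (subspace on $L$, subspace on $q(L)$, or quotient on $Y$) one is testing closedness against at each step, but this is bookkeeping rather than a genuine difficulty.
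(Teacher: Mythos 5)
Your proof is correct, and it is exactly the standard argument (the paper offers no proof of its own here, simply citing \cite[2.6]{Steenrod:CCTS}, where essentially this same argument appears). The identity $q^{-1}(A)\cap L=(q|_L)^{-1}(A\cap q(L))$ together with compactness of $q(L)$ is indeed the whole content, and your reduction through the quotient and weak topologies is handled correctly.
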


\begin{proposition}[{\cite[4.4]{Steenrod:CCTS}}]\label{prop:quotient product}
The collection of quotient maps between convenient spaces is closed under finite products.
\end{proposition}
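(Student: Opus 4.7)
The plan is to reduce to Whitehead's classical theorem---that product with a locally compact Hausdorff space preserves quotient maps---by restricting to compact slices. First, I would factor $f_1 \times f_2 = (\id_{Y_1} \times f_2) \circ (f_1 \times \id_{X_2})$ and use that composites of quotient maps are quotient maps, which reduces the problem to the claim that $f \times \id_Z \colon X \times Z \to Y \times Z$ is a quotient map for any quotient map $f\colon X \to Y$ of convenient spaces and any convenient $Z$. Surjectivity of $f \times \id_Z$ is immediate.

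To verify the topology, suppose $U \subseteq Y \times Z$ has $(f \times \id_Z)^{-1}(U)$ open in the convenient product $X \times_c Z$; I would show $U$ is open in $Y \times_c Z$. Since the convenient product carries the weak topology with respect to its compact subsets, and every compact subset of $Y \times Z$ is contained in a compact box $K_Y \times K_Z$ (with $K_Y \times K_Z$ itself compact Hausdorff), this reduces to proving that $U \cap (K_Y \times K_Z)$ is open in $K_Y \times K_Z$ for arbitrary compact $K_Y \subseteq Y$ and $K_Z \subseteq Z$.

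Since $K_Y$ is closed in the Hausdorff space $Y$, the restriction of $f$ to the saturated closed subspace $f^{-1}(K_Y)$ is a quotient map $\bar f \colon f^{-1}(K_Y) \to K_Y$. Whitehead's theorem then yields that $\bar f \times \id_{K_Z}$ is a quotient map in the ordinary topological sense, since $K_Z$ is locally compact Hausdorff. Consequently, $U \cap (K_Y \times K_Z)$ will be open in $K_Y \times K_Z$ exactly when $(f \times \id_Z)^{-1}(U) \cap (f^{-1}(K_Y) \times K_Z)$ is open in the ordinary product $f^{-1}(K_Y) \times K_Z$.

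The main obstacle is this final identification of subspace topologies. The key ingredient is that the ordinary product $X \times K_Z$ is already $k$-generated when $K_Z$ is locally compact Hausdorff, so that $X \times K_Z = X \times_c K_Z$, and the closed inclusion $K_Z \hookrightarrow Z$ embeds $X \times_c K_Z$ as a closed subspace of $X \times_c Z$ (using preservation of closed embeddings by the convenient product in a fixed factor). Restricting further to $f^{-1}(K_Y) \times K_Z$---a closed subspace of $X \times K_Z$---then shows that the ambient openness in $X \times_c Z$ descends to openness in the ordinary product, closing the chain of reductions.
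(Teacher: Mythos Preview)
The paper does not supply its own proof of this proposition; it is simply cited from Steenrod~\cite[4.4]{Steenrod:CCTS}. Your argument is correct and follows a direct point-set route: factor through $f\times\id_Z$, test openness on compact boxes $K_Y\times K_Z$, and invoke Whitehead's theorem on the restriction $\bar f\times\id_{K_Z}$, using Proposition~\ref{prop:locally compact} and Proposition~\ref{prop:closed embedding} to match the convenient and ordinary subspace topologies along the way. Each of the identifications you make (that $\bar f\colon f^{-1}(K_Y)\to K_Y$ is a quotient map onto a saturated closed image, that $X\times_c K_Z = X\times K_Z$, and that $X\times_c K_Z\hookrightarrow X\times_c Z$ is a closed embedding via $k$) is valid.

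For comparison, Steenrod's own argument---and the one most in the spirit of the appendix here, which records Cartesian closedness as Proposition~\ref{prop:cartesian}---is more structural: since $(-)\times_c Z$ is left adjoint to the internal hom $(-)^Z$, it preserves colimits and in particular quotient maps. Concretely, a function $g\colon Y\times_c Z\to W$ is continuous iff its adjunct $\hat g\colon Y\to W^Z$ is, iff $\hat g\circ f$ is (as $f$ is a quotient), iff $g\circ(f\times\id_Z)$ is. Your hands-on approach has the virtue of not relying on the exponential law, at the cost of some bookkeeping with subspace topologies; the adjunction proof is shorter but presupposes Proposition~\ref{prop:cartesian}.
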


\begin{proposition}[{\cite[4.3]{Steenrod:CCTS}}]\label{prop:locally compact}
If $X$ is locally compact Hausdorff and $Y$ convenient, then the topology on the convenient product $X\times Y$ is the ordinary product topology.
\end{proposition}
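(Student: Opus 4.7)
The plan is to show that $X \times Y$ with the ordinary product topology is already convenient, so that the coreflection $k$ used to form the convenient product acts as the identity. Hausdorffness is automatic from the Hausdorffness of the two factors, so the work lies in verifying that $X \times Y$ carries the weak topology with respect to its compact subsets.

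The crucial technical input is the standard exponential adjunction: when $X$ is locally compact Hausdorff, the functor $X \times (-) \colon \Top \to \Top$ is left adjoint to the mapping space functor $(-)^X$ equipped with the compact-open topology, and therefore preserves all colimits formed in $\Top$. Since $Y$ is convenient, it is the colimit in $\Top$ of its compact subspaces along the filtered poset of inclusions, a diagram of closed embeddings. Applying $X \times (-)$ then yields
\[
X \times Y \;\cong\; \colim_{K \subseteq Y \text{ compact}} (X \times K)
\]
in $\Top$. Each $X \times K$ is the product of a locally compact Hausdorff space with a compact Hausdorff space, hence is itself locally compact Hausdorff, and in particular convenient.

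To finish, I identify the colimit topology on $X \times Y$ with the weak topology relative to its compact subsets. By definition of the colimit, a subset $A \subseteq X \times Y$ is open there if and only if $A \cap (X \times K)$ is open in $X \times K$ for every compact $K \subseteq Y$, while the compactly generated topology asks that $A \cap C$ be open in $C$ for every compact $C \subseteq X \times Y$. The forward implication uses that any compact $C \subseteq X \times Y$ lies inside $X \times \pi_Y(C)$ for $\pi_Y(C) \subseteq Y$ compact, so openness of $A \cap (X \times \pi_Y(C))$ in $X \times \pi_Y(C)$ restricts to openness of $A \cap C$ in $C$. The reverse implication uses the fact---already known since $X \times K$ is convenient---that openness of a subset of $X \times K$ is detected by its intersections with compact subsets of $X \times K$, each of which is in turn a compact subset of $X \times Y$.

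The main obstacle I expect is the bookkeeping in this final equivalence, which requires a careful juggling of the three types of compacta in play: compact subsets of $Y$, of $X \times Y$, and of $X \times K$. Everything else reduces to invocation of the classical exponential adjunction for locally compact Hausdorff $X$ and to the presentation of the convenient space $Y$ as a filtered colimit of its compact subspaces.
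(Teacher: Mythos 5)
Your argument is correct. Note that the paper offers no proof of this statement at all: it is quoted verbatim from Steenrod \cite[4.3]{Steenrod:CCTS}, so there is no in-paper argument to compare against. Steenrod's original proof is a direct point-set verification: given a compactly-open $A\subseteq X\times Y$ and a point of $A$, he uses a compact neighborhood in $X$ and the compact generation of $Y$ in a tube-lemma-style argument to produce a basic open box inside $A$. Your route is more structural: you use that $X\times(-)$ is a left adjoint (via the exponential law for locally compact Hausdorff $X$) to write the ordinary product as $\colim_K (X\times K)$ over the compact subsets of $Y$, reduce to the locally compact Hausdorff spaces $X\times K$, and then match the colimit topology with the weak topology via the projection trick $C\subseteq X\times\pi_Y(C)$. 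All the steps check out --- in particular the reverse implication correctly uses that $X\times K$ is itself compactly generated and that compacta of $X\times K$ are compacta of $X\times Y$ --- and this presentation of $Y$ as a filtered colimit of its compacta is in the same spirit as arguments the authors themselves use elsewhere (e.g.\ in Lemma \ref{lem:symmetric continuity} and the proof of Theorem \ref{thm:point set}). What your approach buys is reusability of the colimit decomposition; what Steenrod's buys is independence from the exponential adjunction, which is itself a nontrivial input of comparable depth to the statement being proved.
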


\begin{proposition}[{\cite[5.6]{Steenrod:CCTS}}]\label{prop:cartesian}
The category of convenient topological spaces is Cartesian closed.
\end{proposition}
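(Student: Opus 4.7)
The plan is to construct, for each convenient space $X$, a right adjoint $(-)^X$ to the endofunctor $-\times X$ on convenient spaces. I would define $Y^X$ as the set $C(X,Y)$ of continuous maps, topologized as $k(\tau_{co})$: the coreflection onto convenient spaces of the compact-open topology $\tau_{co}$. Since $Y$ is Hausdorff, so is $\tau_{co}$, and applying $k$ preserves the Hausdorff condition; hence $Y^X$ is convenient.

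The central task is to establish continuity of the evaluation map $\ev \colon Y^X \times X \to Y$, the product being taken in convenient spaces. My approach is to reduce to the classical locally compact case: since a convenient space carries the weak topology with respect to its compact subsets, it suffices to show that each composite $Y^X \times K \hookrightarrow Y^X \times X \to Y$ is continuous for $K \subseteq X$ compact. By Proposition \ref{prop:locally compact}, the convenient product $Y^X \times K$ agrees with the ordinary product; composing the restriction $Y^X \to C(K,Y)_{co}$ with the classical continuous evaluation $C(K,Y)_{co} \times K \to Y$ then gives the result.

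Given continuity of $\ev$, the natural bijection $\Hom_{\mathrm{Conv}}(Z \times X, Y) \cong \Hom_{\mathrm{Conv}}(Z, Y^X)$ is given by $f \mapsto (z \mapsto f(z,-))$, with inverse $g \mapsto \ev \circ (g \times \id_X)$. Continuity of the adjunct $\tilde f$ is a standard exercise from the defining subbasis for $\tau_{co}$ combined with the universal property of $k$, and the triangle identities are then formal.

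The main obstacle is the interplay between $k$-ification and products. In the category of all topological spaces, the exponential law fails for non-locally-compact $X$ precisely because $\tau_{co}$ on its own does not make evaluation continuous; the convenient setting patches this by $k$-ifying both products and internal homs, but one must verify that these two $k$-ifications cooperate. The leverage point is Proposition \ref{prop:locally compact}, which lets us replace the convenient product by the ordinary one on compact slices of $X$ and thereby sidestep the obstruction, reducing everything to the classical statement for locally compact Hausdorff spaces.
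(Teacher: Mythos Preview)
The paper gives no proof of this proposition; it is simply cited from Steenrod \cite[5.6]{Steenrod:CCTS}. Your sketch is essentially Steenrod's original argument, and it is correct. One small elision worth making explicit: in your reduction ``it suffices to show that each composite $Y^X \times K \to Y$ is continuous,'' the justification is that any compact subset of the convenient product $Y^X \times X$ is contained in a product $L \times K$ of compact subsets (via the projections), on which the convenient and ordinary product topologies agree; your argument then supplies continuity on the larger set $Y^X \times_{\mathrm{ord}} K$, hence on $L \times K$. With that step filled in, the proof is complete.
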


\begin{proposition}\label{prop:closed embedding}
The functor $k$ preserves closed embeddings.
\end{proposition}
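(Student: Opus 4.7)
The plan is to work directly from the definition, using the fact that a subset of $k(Y)$ is closed precisely when its intersection with every compact $K \subseteq Y$ is closed in $K$. First I would record a preliminary fact: for any Hausdorff space $Y$, the compact subspaces of $Y$ and of $k(Y)$ coincide, with the same subspace topology. The counit $k(Y) \to Y$ is a continuous bijection, so compacts of $k(Y)$ remain compact in $Y$; conversely, any compact subspace $K \subseteq Y$ is itself compact Hausdorff and hence convenient, so by the adjunction the inclusion $K \to Y$ factors continuously through $k(Y)$, giving $K$ the same topology in both settings.

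Fix a closed embedding $i : A \hookrightarrow X$ of Hausdorff spaces. Next I would verify that $i(A)$ is closed in $k(X)$: for any compact $K \subseteq X$, the intersection $i(A) \cap K$ is closed in $K$ simply because $i(A)$ is already closed in $X$.

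The main step is to show that $k(i) : k(A) \to k(X)$ is a topological embedding, i.e., that $k(A)$ carries the subspace topology from $k(X)$. One direction---that any set of the form $i^{-1}(C)$ with $C$ closed in $k(X)$ is closed in $k(A)$---is formal from the continuity of $k(i)$. For the converse, given $B \subseteq A$ closed in $k(A)$, I would take $C = i(B) \subseteq k(X)$ as a candidate closed witness and test it against an arbitrary compact $K \subseteq X$. Using closedness of $i(A)$ in $X$, the set $K \cap i(A)$ is compact, and the embedding property of $i$ identifies it via $i^{-1}$ with a compact subspace $L \subseteq A$. The identity
\[ i(B) \cap K \;=\; i(B) \cap i(L) \;=\; i(B \cap L) \]
then reduces the problem to the hypothesis that $B \cap L$ is closed in $L$, whence $i(B \cap L)$ is closed in $K \cap i(A)$, and hence in $K$.

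The main obstacle is simply the bookkeeping of keeping track of which ambient topology each closure statement refers to, and of distinguishing the ambient topology on $X$ from the weak topology on $k(X)$ even though compact subspaces do not see the difference. Once the equality of compact subspaces is in hand, the proof is a direct unwinding of definitions, with the embedding property of $i$ providing the homeomorphism $L \xrightarrow{\sim} K \cap i(A)$ that powers the decisive identity above.
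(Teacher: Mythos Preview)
Your argument is correct and follows essentially the same route as the paper. The paper's version is more compressed: it simply observes that if $C\subseteq A$ is compactly closed in $A$ then $C$ is compactly closed in $X$ (because $K\cap A$ is compact for every compact $K\subseteq X$), concluding that $k(i)$ is a closed injection and hence a closed embedding; your step~2 is the special case $B=A$ of your step~3, and your preliminary about compact subspaces of $Y$ versus $k(Y)$ makes explicit a point the paper leaves tacit.
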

\begin{proof}
Without loss of generality, we may consider the inclusion $i:A\to X$ of a closed subspace of a (not necessarily convenient) topological space $X$. Since $A$ is closed, the intersection of a compact subset of $K$ with $A$ is again compact; therefore, a subset $C\subseteq A$ that is compactly closed in $A$ is also compactly closed in $X$, which is to say that $k(i)$ is a closed map. Since $k(i)$ remains injective, the claim follows.
\end{proof}

\begin{corollary}\label{cor:pullback inclusion}
Given maps $f_i:X_i\to Y$ between convenient spaces for $i\in \{1,2\}$, the canonical map $X_1\times_Y X_2\to X_1\times X_2$ is a closed embedding.
\end{corollary}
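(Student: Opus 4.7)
The plan is to reduce the statement to Proposition \ref{prop:closed embedding} by observing that the convenient pullback and convenient product are both obtained by applying the functor $k$ to their ordinary counterparts, so the whole claim boils down to verifying that the corresponding ordinary inclusion is a closed embedding and then invoking $k$-preservation.

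More concretely, I would first unwind the definitions: since convenient limits are computed by applying $k$ to ordinary limits, the convenient product $X_1\times X_2$ equals $k(X_1\times^{\mathrm{ord}} X_2)$ and the convenient pullback $X_1\times_Y X_2$ equals $k\bigl((X_1\times^{\mathrm{ord}} X_2)\times^{\mathrm{ord}}_{Y\times^{\mathrm{ord}} Y}\Delta_Y\bigr)$, where $\Delta_Y\subseteq Y\times^{\mathrm{ord}} Y$ is the diagonal. The canonical map from the convenient pullback to the convenient product is obtained by applying $k$ to the canonical inclusion at the level of ordinary topological spaces.

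Next I would verify that the ordinary inclusion $X_1\times_Y^{\mathrm{ord}} X_2\hookrightarrow X_1\times^{\mathrm{ord}} X_2$ is a closed embedding. It is tautologically a topological embedding (it is the subspace inclusion in the ordinary product topology), and its image is the preimage of the diagonal $\Delta_Y\subseteq Y\times^{\mathrm{ord}} Y$ under the continuous map $f_1\times f_2$. Since $Y$ is convenient, in particular Hausdorff, $\Delta_Y$ is closed, so the image is closed.

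Finally, applying Proposition \ref{prop:closed embedding} to this ordinary closed embedding yields that $k$ of it is again a closed embedding, which is exactly the canonical map $X_1\times_Y X_2\to X_1\times X_2$ in the convenient category. I expect no real obstacle; the only thing to be slightly careful about is not confusing the convenient product with the ordinary one when identifying the source and target of the map, but this is handled precisely by the description of convenient limits as $k$ applied to ordinary limits.
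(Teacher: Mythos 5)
Your proposal is correct and follows essentially the same route as the paper: identify both convenient limits as $k$ applied to the ordinary ones, observe that the ordinary pullback is the preimage of the diagonal of the Hausdorff space $Y$ (hence a closed subspace of the ordinary product), and conclude via Proposition \ref{prop:closed embedding} that $k$ preserves the closed embedding. No gaps.
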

\begin{proof}
As a right adjoint, the functor $k$ preserves limits, so the pullback and product in question are obtained by forming the respective ordinary limits in the category of topological spaces and applying $k$. By Proposition \ref{prop:closed embedding}, then, it suffices to establish the corresponding claim for the map between ordinary limits. For this purpose, we note that the pullback is the preimage under $f_1\times f_2$ of the diagonal in $Y$, which is closed since $Y$ is Hausdorff.
\end{proof}

\end{appendix}

\bibliographystyle{amsalpha}
\bibliography{references}

\end{document}